\numberwithin{equation}{section}
\newtheorem{Theorem}{Theorem}[section]
\newtheorem*{Theorem*}{Theorem}
\newtheorem*{Corollary*}{Corollary}
\newtheorem{Lemma}[Theorem]{Lemma}
\newtheorem{Proposition}[Theorem]{Proposition}
\newtheorem{Corollary}[Theorem]{Corollary}
\theoremstyle{definition}
\newtheorem{Definition}[Theorem]{Definition}
\theoremstyle{remark}
\newtheorem{Remark}[Theorem]{Remark}
\newtheorem*{Remark*}{Remark}
\newtheorem{Example}[Theorem]{Example}
\newbox\squ  
\newcommand{\C}{\mathbb{C}}
\newcommand{\Nbb}{\mathbb{N}}
\newcommand{\Z}{\mathbb{Z}}
\renewcommand{\Z}{\mathbb{Z}}
\newcommand{\g}{\mathfrak{g}}
\newcommand{\gl}{\mathfrak{gl}}
\newcommand{\h}{\mathfrak{h}}
\renewcommand{\sl}{\mathfrak{sl}}
\renewcommand{\so}{\mathfrak{so}}
\newcommand{\z}{\mathfrak{z}}
\newcommand{\m}{\mathfrak{m}}
\newcommand{\n}{\mathfrak{n}}
\newcommand{\Q}{\mathcal{Q}}
\newcommand{\G}{\mathcal{G}}
\newcommand{\N}{\mathcal{N}}
\renewcommand{\O}{\mathcal{O}}
\newcommand{\A}{\mathcal{A}}
\newcommand{\Ss}{\mathcal{S}}
\newcommand{\ad}{\operatorname{ad}}
\newcommand{\Lie}{\operatorname{Lie}}
\newcommand{\reg}{{\operatorname{reg}}}
\newcommand{\Hom}{\operatorname{Hom}}
\newcommand{\gr}{\operatorname{gr}}
\newcommand{\graded}{\operatorname{\tt{gr}}}
\newcommand{\Spec}{\operatorname{Spec}}
\newcommand{\Aut}{\operatorname{Aut}}
\newcommand{\PAut}{\operatorname{PAut}}
\newcommand{\Der}{\operatorname{Der}}
\newcommand{\Ann}{\operatorname{Ann}}
\newcommand{\GL}{\operatorname{GL}}
\newcommand{\SL}{\operatorname{SL}}
\newcommand{\SO}{\operatorname{SO}}
\newcommand{\filt}{{\operatorname{filt}}}
\newcommand{\Qnt}{\operatorname{Q}}
\newcommand{\PD}{\operatorname{PD}}
\newcommand{\Forg}{\operatorname{\tt{F}}}
\newcommand{\Sets}{\operatorname{\bf{Sets}}}
\newcommand{\GrAlg}{\operatorname{\bf{G}}}
\newcommand{\FAlg}{\operatorname{\bf{F}}}
\newcommand{\SFAlg}{\operatorname{\bf{SF}}}
\newcommand{\id}{\operatorname{id}}
\newcommand{\btau}{\overline{\tau}}
\newcommand{\cD}{\overset{\circ}{D}}
\newcommand{\tX}{\widetilde{X}}
\newcommand{\isoto}{\overset{\sim}{\longrightarrow}}
\newcommand{\onto}{\twoheadrightarrow}
\newcommand{\into}{\hookrightarrow}
\DeclareRobustCommand\longtwoheadrightarrow
\title[Universal quantizations of nilpotent Slodowy slices]{\boldmath Universal filtered quantizations \\ of nilpotent Slodowy slices}
\author{F. Ambrosio, G. Carnovale, F. Esposito and L. Topley}
\begin{document}

\maketitle

\begin{abstract}
Every conic symplectic singularity  admits a universal Poisson deformation and a universal filtered quantization, thanks to the work of Losev and Namikawa.
We begin this paper by showing that every such variety admits a universal equivariant Poisson deformation and a universal equivariant quantization with respect to a reductive group acting on it by $\C^\times$-equivariant Poisson automorphisms. 

We go on to study these definitions in the context of nilpotent Slodowy slices.
First we give a complete description of the cases in which the finite $W$-algebra is a universal filtered quantization of the slice, building on the work of Lehn--Namikawa--Sorger.
This leads to a near-complete classification of the filtered quantizations of nilpotent Slodowy slices.

The subregular slices in non-simply-laced Lie algebras are especially interesting: with some minor restrictions on Dynkin type we prove that the finite $W$-algebra is a universal equivariant quantization with respect to the Dynkin automorphisms coming from the unfolding of the Dynkin diagram. This can be seen as a non-commutative analogue of Slodowy's theorem. Finally we apply this result to give a presentation of the subregular finite $W$-algebra in type {\sf B} as a quotient of a shifted Yangian. 
\end{abstract}
\noindent{\bf Keywords: }{Poisson deformations, filtered quantizations, Slodowy slices, $W$-algebras}\\
\medskip
\noindent{\bf MSC: }{ 14B07; 17B08}

\tableofcontents

\section{Introduction}

The finite subgroups of $\SL_2(\C)$ are classified by the simply-laced Dynkin diagrams. If $\Delta$ is such a diagram corresponding to a group $\Gamma$ then the quotient singularity $\C^2 /\Gamma$ is said to have type $\Delta$. It was proven by Artin that these varieties give an exhaustive list of rational isolated surface singularities up to analytic isomorphism \cite{Art}.

The classical theorem of Brieskorn \cite{Br}, conjectured by Grothendieck, states that if $\g$ is a complex simple Lie algebra with simply-laced Dynkin diagram $\Delta$ then the transverse slice  to the subregular orbit is the $\C^\times$-semi-universal deformation of the singularity of type $\Delta$. This remarkable theorem was extended to the non-simply-laced types by Slodowy \cite{Slo}. Let $\Delta_0$ be such a diagram, and let $\Delta$ be simply-laced and $\Gamma_0 \subseteq \Aut(\Delta)$ be uniquely determined by the requirement that $\Delta_0$ is obtained by folding $\Delta$ under $\Gamma_0$ (Cf. \cite[\textsection 13]{Ca}, for example). Then the subregular slice in a Lie algebra of type $\Delta_0$ is the $\C^\times$-semi-universal $\Gamma_0$-deformation of a singularity of type $\Delta$.

Lehn, Namikawa and Sorger have generalised this classical story to arbitrary nilpotent orbits, taking the focus away from the subregular case \cite{LNS}. In general the nilpotent part of a Slodowy slice is not an isolated surface singularity and so there is no versal theory for deformations. It turns out that the correct generalisation is given by realising the Slodowy slice as a Poisson variety via Hamiltonian reduction, following \cite{GG}. Since the nilpotent part of the slice is a conic symplectic singularity, results of Namikawa \cite{Na1, Na2} show that there is a Poisson deformation which is universal, in the sense that every other deformation is obtained by a unique base change. The main result of \cite{LNS} gives a necessary and sufficient condition for the Slodowy slice to be a universal Poisson deformation of its nilpotent part.

Let $G$ be a simple, simply connected, complex algebraic group and $\g = \Lie(G)$ its Lie algebra, with nilpotent cone $\N(\g)$. Choose $e\in \N(\g)$ and identify $e$ with $\chi \in \g^*$ via the Killing isomorphism $\kappa : \g \isoto \g^*$.
Set $\N(\g^*) = \kappa(\N(\g))$.
Denote the adjoint orbit of $e$ by $\O$ and write $\Ss_\chi$ for the Slodowy slice to $\chi$ in $\g^*$, which is transverse to coadjoint orbits. Consider the Springer resolution $\pi : \widetilde\N \to \kappa(\N(\g))$.
Then Theorems~1.2 and 1.3 of \cite{LNS} state that the following are equivalent:
\begin{enumerate}[label=(\arabic*)]
\item $\varphi \colon \Ss_\chi \to \h^*/W$ is a universal Poisson deformation of the central fibre $\varphi^{-1}(0)$;
\item the restriction map $H^2(\widetilde \N, \mathbb{Q}) \to H^2(\pi^{-1}(\chi), \mathbb{Q})$ is an isomorphism;
\item $\O$ does not occur in the Table 1.
\end{enumerate}
\begin{center}

\begin{center}
{\sc Table 1: cases in which $\Ss_\chi \to \h^*/W$ is not a universal Poisson deformation.}
\end{center}\vspace{2pt}
\begin{tabular}{|c|c|c|c|c|c|c|}
\hline
Type of $\g$ & Any & {\sf BCFG} & {\sf C} & \sf{G} 

\\
\hline
Type of $\O$ & Regular & Subregular & Two Jordan blocks & dimension 8 \\ 
\hline
\end{tabular}
\end{center}\vspace{8pt}

The objective of this paper is to prove non-commutative analogues of many of the results described above: we will classify the non-commutative filtered quantizations of every nilpotent Slodowy slice whose orbit does not appear in Table~1, and we prove non-commutative analogues of both Brieskorn's and Slodowy's theorems.

Our point of departure is the work of Namikawa \cite{Na1, Na2} and Losev \cite{Lo} on deformations and quantizations of conic symplectic singularities. Namikawa has shown that every such variety admits a universal Poisson deformation, whilst Losev demonstrated that Namikawa's deformation can be quantized, and that the quantization enjoys a universal property similar to its semi-classical limit. 


In order to compare universal Poisson deformations with universal quantizations, we begin the paper working in a general setting.
We fix $n\in \Nbb$ and a commutative positively graded connected Poisson algebra $A$  with bracket in degree $-n$.
The functor $\PD_A$ of graded Poisson deformations has been well-studied by many authors, and we recap some of the main features in Section \ref{ss:poissondefsection}. The functor $\Qnt_A$ of filtered quantizations of Poisson deformations of $A$ is defined analogously, although it appears to be new to the literature; we state some of its basic properties in Section \ref{ss:quantizationdsection}.
One of the key definitions in this paper is the functor of deformations (or quantizations) with fixed symmetries.
Suppose that $\Gamma$ is a reductive group of graded Poisson automorphisms of $A$.
We define actions on deformations and quantizations of $A$ compatible with the one on the central fibre, namely graded Poisson $\Gamma$-deformations and filtered $\Gamma$-quantizations; see Section~\ref{ss_autdef} and \ref{ss_autqnt}.
Our main result in this setting is the following.


\begin{Theorem} 
\label{T:firstmain}
Suppose that $X$ is a conic symplectic singularity and $\Gamma$ is a reductive group of $\C^\times$-equivariant Poisson automorphisms of $A = \C[X]$. 
\begin{enumerate}[label=(\arabic*)]
\item The functor $\PD_{A, \Gamma}$ of graded Poisson $\Gamma$-deformations of $A$, resp. the functor $\Qnt_{A, \Gamma}$ of filtered $\Gamma$-quantizations of $A$, admit universal elements.
\item A filtered $\Gamma$-quantization is a universal element of $\Qnt_{A, \Gamma}$ if and only if its associated graded is a universal element of $\PD_{A, \Gamma}$. 
\item The universal base $B$ of $\Qnt_{A,\Gamma}$ is a split-filtered commutative algebra, and there is a unipotent algebraic group $U$ consisting of automorphisms of $B$ which induce the identity on the associated graded $\gr B$. 
This group acts freely and transitively on the universal elements of $\Qnt_{A,\Gamma}$ over $B$ with a fixed associated graded. 
\end{enumerate}
\end{Theorem}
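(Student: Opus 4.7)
The plan is to build from the non-equivariant universal objects of Namikawa \cite{Na1, Na2} and Losev \cite{Lo}, equipping them with canonical $\Gamma$-actions and checking that they remain universal after $\Gamma$ is introduced; part~(3) then rests on a careful analysis of the filtered structure of the universal base.

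\textbf{Part (1)} proceeds by transport of structure. Let $(B_\PD, \mathscr{A})$ be Namikawa's universal graded Poisson deformation of $A$ and $(B_\Qnt, \mathscr{Q})$ Losev's universal filtered quantization. Given $\gamma \in \Gamma$, twisting the central-fibre structure of $\mathscr{A}$ by $\gamma$ yields another graded Poisson deformation of $A$; by universality there is a unique graded Poisson algebra automorphism $\phi_\gamma$ of $B_\PD$ together with a compatible lift to $\mathscr{A}$ realizing $\gamma$ on $A$, and the assignment $\gamma \mapsto \phi_\gamma$ is a $\Gamma$-action. I claim this is universal among $\Gamma$-equivariant deformations: for any $\Gamma$-Poisson deformation $(B', \mathscr{A}')$, the unique base change $B_\PD \to B'$ supplied by non-equivariant universality must be $\Gamma$-equivariant, since conjugating it by $\gamma$ yields another base-change inducing the identity on $A$, and uniqueness forces the two to coincide. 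The same argument in the quantization category handles $\Qnt_{A,\Gamma}$.

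\textbf{Part (2)} follows by combining part~(1) with the compatibility built into Losev's construction: $\gr B_\Qnt \cong B_\PD$ and $\gr \mathscr{Q} \cong \mathscr{A}$, so the associated graded of the universal $\Gamma$-quantization is a universal $\Gamma$-Poisson deformation. For the converse, suppose a filtered $\Gamma$-quantization $\mathscr{Q}'/B'$ has universal associated graded; the unique base change $B_\Qnt \to B'$ has universal (hence isomorphism) associated graded, and a Nakayama-type lifting argument for positively filtered algebras promotes it to a filtered isomorphism, making $\mathscr{Q}'$ universal.

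\textbf{Part (3)} exploits the split-filtered structure of $B = B_\Qnt$: its associated graded is isomorphic to $B_\PD$, which is a polynomial algebra by Namikawa, and any lift of a system of homogeneous generators---chosen $\Gamma$-invariantly by complete reducibility of the finite-dimensional $\Gamma$-representations on each filtration piece---yields a filtered isomorphism $\gr B \isoto B$. Define $U$ to be the group of $\Gamma$-equivariant filtered algebra automorphisms of $B$ inducing the identity on $\gr B$; relative to a splitting these are unitriangular in the filtration, and since $B$ is finitely generated this realises $U$ as an algebraic unipotent group. Given two universal $\Gamma$-quantizations $\mathscr{Q}, \mathscr{Q}'$ over $B$ with the same associated graded, universality applied to $\mathscr{Q}'$ produces a unique filtered $\Gamma$-equivariant automorphism $\phi$ of $B$ lifting to an isomorphism $\mathscr{Q} \isoto \mathscr{Q}'$; part~(2) shows $\gr \phi = \id$, hence $\phi \in U$, the uniqueness clause yielding freeness and the existence yielding transitivity. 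The main technical hurdle throughout is ensuring $\Gamma$-equivariance of every choice (lifts, splittings, the action of $U$), which is made possible by the reductivity of $\Gamma$.
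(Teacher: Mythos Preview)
Your approach contains the right starting ingredient --- the induced $\Gamma$-action on the universal base via the universal property --- but there is a misidentification of the universal $\Gamma$-equivariant object, and a gap in how the $\Gamma$-structure on the total space is constructed.

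First, in the paper's Definition~\ref{def_gammagpd} a $\Gamma$-deformation over $B$ carries a \emph{$B$-linear} $\Gamma$-action on $\A$; the base $B$ itself carries no $\Gamma$-action. Consequently the universal base of $\PD_{A,\Gamma}$ is not $B_\PD$ equipped with its induced $\Gamma$-action, but the \emph{coinvariant algebra} $(B_\PD)_\Gamma = B_\PD / (b - b\cdot\gamma : b \in B_\PD,\ \gamma \in \Gamma)$, and the universal element is the pushforward of $\mathscr{A}$ along the quotient map (Proposition~\ref{P:dforgetful}). Your own argument implicitly contains this: the observation that the base change $B_\PD \to B'$ is $\Gamma$-equivariant, with $\Gamma$ acting trivially on $B'$, says precisely that the map factors through the coinvariants --- but you stop short of passing to the quotient and instead declare $(B_\PD, \mathscr{A})$ itself universal, which it is not for $\PD_{A,\Gamma}$ as defined.

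Second, you assert that universality supplies a unique $\phi_\gamma \in \Aut(B_\PD)$ ``together with a compatible lift to $\mathscr{A}$'', and that these assemble into a $\Gamma$-action. The automorphism $\phi_\gamma$ of the base is indeed unique, but the lift to $\mathscr{A}$ is only determined up to the unipotent group $U$ of $B_\PD$-linear Poisson automorphisms inducing the identity on $A$ (Lemma~\ref{L:propertyp}(3)). So the lifts form only an extension $1 \to U \to \widetilde{\Gamma} \to \Gamma \to 1$, not a priori a $\Gamma$-action on $\mathscr{A}$. The paper resolves this via Mostow's theorem: reductivity of $\Gamma$ splits the extension, and any two splittings are $U$-conjugate (Lemma~\ref{L:semidirect}, Theorem~\ref{T:dforgetful}). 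This Levi-decomposition step is the substantive use of reductivity that your final sentence alludes to, and it is the heart of the proof of part~(1), not a side technicality.
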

The proof will be completed in Section~\ref{ss:conicalsingulairities}. 
We expect even the case of $\Gamma$ trivial to have various applications, some of which we explain at the end of the introduction. 

We now return to the Lie theoretic setting discussed at the beginning of this introduction, and retain the notation introduced there.
We briefly recount the basic properties of the {\it Slodowy slice} $\Ss_\chi$.
Let $(e,h,f)$ be an $\sl_2$-triple and $\g^f$ the centraliser of $f$. Then $\Ss_\chi := \chi + \kappa(\g^f)$ is a transversal slice to coadjoint $G$-orbits, admitting a contracting $\C^\times$-action. Furthermore it carries a Poisson structure via Hamiltonian reduction, studied in \cite{GG} (see Section~\ref{ss:Poissonslices} for more detail). Slodowy showed \cite[Corollary~7.4.1]{Slo} that the adjoint quotient map $\g^* \to \g^*/\!/G \simeq \h^*/W$ restricts to a flat $\C^\times$-equivariant morphism $\Ss_\chi \to \h^*/W$ and it follows that the slice provides a Poisson deformation of the central fibre $\Ss_{\chi, \N} = \Ss_\chi \cap \N(\g^*)$ which we call the {\it nilpotent Slodowy slice}.

On the other hand, Premet introduced a filtered quantization of the Slodowy slice known as the finite $W$-algebra \cite{PrST}. This is a non-commutative filtered algebra $U(\g,e)$ which depends only on $\g$ and the orbit of $e$. Since their inception they have found numerous applications to the ordinary and modular representation theory of Lie algebras; see \cite{PrMF} for a nice overview of both settings. If $\lambda$ denotes a central character of $U(\g,e)$ then $U(\g,e)^\lambda$ will denote the quotient by the ideal generated by the kernel of $\lambda$.
\begin{Theorem}
\label{T:secondmain}
Let $A = \C[\Ss_{\chi, \N}]$. The following are equivalent:
\begin{enumerate}[label=(\arabic*)]
\item The functor $\Qnt_A$ is representable over $\C[\h^*/W]$ and the finite $W$-algebra $U(\g,e) \in \Qnt_A(\C[\h^*/W])$ is a universal element;
\item the orbit $\O$ is not listed in the Table 1.
\end{enumerate}
When these equivalent conditions hold  each filtered algebra quantizing $\C[\Ss_{\chi, \N}]$ is isomorphic  to $U(\g,e)^\lambda$ for some choice of central character $\lambda$.
\end{Theorem}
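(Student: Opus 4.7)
The plan is to combine the first main theorem (Theorem~\ref{T:firstmain}) with the Lehn--Namikawa--Sorger classification recalled in the introduction, using the associated graded construction to transfer between the quantization and Poisson deformation settings. The common input that bridges the two sides is Premet's finite $W$-algebra, whose semiclassical limit is the Slodowy slice viewed as a Poisson deformation over $\h^*/W$.

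I would begin by recalling Premet's construction of $U(\g,e)$: with its Kazhdan filtration it satisfies $\gr U(\g,e) \cong \C[\Ss_\chi]$ as graded Poisson algebras, and the Harish-Chandra homomorphism realises $\C[\h^*/W]$ as a filtered central subalgebra whose associated graded is Slodowy's morphism $\C[\h^*/W] \to \C[\Ss_\chi]$. Thus $U(\g,e)$ defines an object of $\Qnt_A(\C[\h^*/W])$ whose associated graded Poisson $\C[\h^*/W]$-algebra is exactly the family $\Ss_\chi \to \h^*/W$. Now Theorem~\ref{T:firstmain}(2) (in the case of trivial $\Gamma$) says that $U(\g,e)$ is a universal element of $\Qnt_A$ over $\C[\h^*/W]$ if and only if $\Ss_\chi \to \h^*/W$ is a universal element of $\PD_A$; by Lehn--Namikawa--Sorger the latter holds if and only if $\O$ does not appear in Table~1, giving the equivalence $(1)\Leftrightarrow(2)$. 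Representability of $\Qnt_A$ is itself guaranteed by Theorem~\ref{T:firstmain}(1), which in the non-equivariant case is Losev's theorem.

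For the final classification claim, suppose the equivalent conditions hold, so the universal base of $\Qnt_A$ is $\C[\h^*/W]$ with universal element $U(\g,e)$. Any filtered algebra $\A$ with $\gr \A \cong \C[\Ss_{\chi,\N}]$ is an object of $\Qnt_A(\C)$, so by the universal property $\A$ arises by base change along a unique map of filtered commutative algebras $\C[\h^*/W] \to \C$; such a map is precisely a central character $\lambda$, and the base change is $U(\g,e)^\lambda = U(\g,e)\otimes_{\C[\h^*/W]}\C_\lambda$.

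The main obstacle I would expect is bookkeeping rather than a deep argument: one has to verify that Premet's Harish-Chandra embedding supplies a legitimate split-filtered structure on the universal base in the sense of Theorem~\ref{T:firstmain}(3), so that specialisation along a character of $\C[\h^*/W]$ really coincides with the ring-theoretic quotient $U(\g,e)^\lambda$, and that the unipotent group of Theorem~\ref{T:firstmain}(3) does not produce a competing universal element distinct from $U(\g,e)$ under this particular splitting. Once this compatibility is pinned down, both the equivalence and the classification follow formally from the universal properties.
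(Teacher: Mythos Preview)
Your proposal is correct and follows essentially the same route as the paper: the paper's proof of Theorem~\ref{T:secondmain} (given as Theorem~\ref{T:Walgfilteredquant}) simply combines Lemma~\ref{L:Slodowyconicsymp} (conic symplectic singularity, hence OQT), Lemma~\ref{L:Walgebragraded} ($U(\g,e)$ quantizes $\C[\Ss_\chi]$ over the centre), Theorem~\ref{T:LSNtheorem} (LNS classification), and Lemma~\ref{L:interplay} (the $\Gamma=\{1\}$ case of Theorem~\ref{T:firstmain}(2)). Your residual ``obstacles'' are exactly what the paper discharges in \S\ref{ss:casimirsandcentres}--\ref{ss:universaldeformationoftheslice}: the Harish--Chandra isomorphism supplies the split-filtered structure on $Z(\g,e)\simeq\C[\h^*/W_\bullet]$, and the unipotent ambiguity of Theorem~\ref{T:firstmain}(3) is irrelevant since any universal element will do.
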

This result is proven in Theorem~\ref{T:Walgfilteredquant}. Our argument consists of assembling the necessary ingredients from throughout the literature to apply Theorem~\ref{T:firstmain} with $\Gamma$ trivial. In Section~\ref{ss:Poissonslices} we define the Poisson structure on $\Ss_\chi$ and recall the fact, well-known to experts, that $\Ss_{\chi, \N}$ is a conic symplectic singularity. In Section~\ref{ss:finiteWalgebras} we recall the basic properties of the finite $W$-algebra, whilst in Sections~\ref{ss:casimirsandcentres} and \ref{ss:universaldeformationoftheslice} we record the remaining facts needed to explain that the Slodowy slice and the finite $W$-algebra are Poisson deformations and quantizations of $\Ss_{\chi, \N}$.

Finally we focus on the subregular case considered by Brieskorn, Grothendieck and Slodowy \cite{Br, Slo}.
The non-commutative analogue of Brieskorn's theorem says that the subregular finite $W$-algebra attached to a simply laced Lie algebra of type $\Delta$ is a universal filtered quantization of the rational singularity of type $\Delta$. This is a special case of Theorem~\ref{T:secondmain}.
Some of the most interesting applications in this paper arise from our non-commutative analogue of Slodowy's theorem. Let $\g_0$ be a simple Lie algebra with non-simply-laced Dynkin diagram $\Delta_0$, and let ($\Delta, \Gamma_0)$ be determined by $\Delta_0$ by folding, as we described earlier.
Let $\chi=\kappa(e)$, where $e\in \g$ is a subregular element.
With some restrictions on $\Delta_0$ we prove the following analogue of Slodowy's theorem \cite[\textsection 8.8]{Slo} in the setting of universal Poisson deformations and their quantizations.
 \begin{Theorem}
 \label{T:thirdmain}
 Let $\g_0$ be of type {\sf B}$_n$, {\sf C}$_n$ or {\sf F}$_4$, where $n \ge 2$ and $n$ is even in type {\sf C}. Let $W_0$ be the Weyl group of $\g_0$, let $e_0 \in \N(\g_0)$ be a subregular nilpotent element and $\chi_0 = \kappa(e_0)$. Let $A = \C[\Ss_{\chi, \N}]$. Then: 
\begin{enumerate}[label=(\arabic*)]
\item The functor $\PD_{A,\Gamma_0}$ is representable over $\C[\h_0^*]^{W_0}$ and $\C[\Ss_{\chi_0}]$ is a universal element; 
\item The functor $\Qnt_{A,\Gamma_0}$ is representable over $\C[\h_0^*]^{W_0}$ and the 
finite $W$-algebra $U(\g_0, e_0)$ is a universal element.
\end{enumerate}
 \end{Theorem}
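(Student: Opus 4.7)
The plan is to apply Theorem~\ref{T:firstmain} with $\Gamma = \Gamma_0$ and $X = \Ss_{\chi_0,\N}$, and then to identify the resulting abstract universal elements of $\PD_{A,\Gamma_0}$ and $\Qnt_{A,\Gamma_0}$ with $\Ss_{\chi_0}$ and $U(\g_0,e_0)$, respectively. By Slodowy's theorem \cite[\textsection 8.8]{Slo}, $\Ss_{\chi_0,\N}$ is the Kleinian singularity of type $\Delta$, $\Gamma_0$ acts on it via the unfolding, and $\Ss_{\chi_0} \to \h_0^*/W_0$ is a $\C^\times$-semi-universal $\Gamma_0$-equivariant deformation. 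Thus $\Ss_{\chi_0}$ naturally defines an element of $\PD_{A,\Gamma_0}(\C[\h_0^*]^{W_0})$, while $U(\g_0,e_0)$ with its Kazhdan filtration defines an element of $\Qnt_A(\C[\h_0^*]^{W_0})$ whose associated graded is $\C[\Ss_{\chi_0}]$.

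For part~(1), let $B$ denote the base of the universal graded Poisson $\Gamma_0$-deformation supplied by Theorem~\ref{T:firstmain}(1). Universality yields a unique graded base change $B \to \C[\h_0^*]^{W_0}$ whose pullback is $\Ss_{\chi_0}$. To prove this is an isomorphism I would combine three ingredients: in simply-laced type $\Delta$, Theorem~\ref{T:secondmain} identifies the non-equivariant universal Poisson deformation of $\Ss_{\chi_0,\N}$ as the Brieskorn--Grothendieck slice with base $\C[\h^*]^W$; a naturality argument identifies $B$ with the $\Gamma_0$-invariant subalgebra $(\C[\h^*]^W)^{\Gamma_0}$, which by folding equals $\C[\h_0^*]^{W_0}$; and Slodowy's $\Gamma_0$-semi-universality forces the resulting comparison map to be surjective on tangent spaces at the origin, hence an isomorphism $B \isoto \C[\h_0^*]^{W_0}$.

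For part~(2), suppose for the moment that $U(\g_0,e_0)$ carries a filtered $\Gamma_0$-action lifting the Slodowy action on its associated graded. Then $U(\g_0,e_0) \in \Qnt_{A,\Gamma_0}(\C[\h_0^*]^{W_0})$, its associated graded is the universal element from part~(1), and Theorem~\ref{T:firstmain}(2) immediately yields universality.

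The principal obstacle is the construction of the $\Gamma_0$-action on $U(\g_0,e_0)$, since $\g_0$ carries no diagram automorphism realising $\Gamma_0$ and the action must be defined intrinsically on the $W$-algebra. My approach would exploit Theorem~\ref{T:firstmain}(3): filtered quantizations of $A$ over $\C[\h_0^*]^{W_0}$ with prescribed associated graded form a torsor for a unipotent group $U$, so the twists of $U(\g_0,e_0)$ by the elements of $\Gamma_0$ yield quantizations related to $U(\g_0,e_0)$ by elements of $U$. The resulting $1$-cocycle can be trivialised, $U$ being unipotent, producing a $\Gamma_0$-stable representative of the filtered isomorphism class of $U(\g_0,e_0)$. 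The Dynkin restrictions (even rank in type~{\sf C}, no type~{\sf G}$_2$) are likely imposed precisely to ensure these cocycle arguments go through uniformly; in any case they match the types for which the explicit shifted Yangian presentation, to be used later in the paper, is available and allows one to exhibit the action by hand.
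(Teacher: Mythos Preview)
Your overall strategy---pass through the universal $\Gamma_0$-deformation built from the simply-laced universal deformation, then upgrade via Theorem~\ref{T:firstmain}(2)---matches the paper's. But there are two substantive problems.

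\textbf{The Dynkin restrictions enter in part~(1), not part~(2).} Your speculation that the hypotheses on $\g_0$ are needed for a cocycle argument on the quantum side is incorrect. In the paper the restrictions arise entirely from a \emph{degree-counting} argument in the proof of~(1). One has two graded surjections out of $\C[\h^*/W]$: the coinvariant projection $\alpha_{\Gamma_0}\colon\C[\h^*/W]\to\C[\h^*/W]_{\Gamma_0}$ (whose image is the universal $\Gamma_0$-base by Proposition~\ref{P:dforgetful}), and the map $\phi\colon\C[\h^*/W]\to\C[\h_0^*/W_0]$ supplied by the universal property applied to $(\C[\Ss_{\chi_0}],\iota_0)$. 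The whole point is to show $\ker\phi=\ker\alpha_{\Gamma_0}$. Under the stated hypotheses the Kazhdan degrees of the fundamental invariants of $W$ split cleanly into those $\equiv 0\pmod 4$ (which match the degrees for $W_0$) and those $\equiv 2\pmod 4$; since $\C[\h_0^*/W_0]$ is concentrated in degrees divisible by~$4$, both $\phi$ and $\alpha_{\Gamma_0}$ are forced to kill exactly the generators in the second set, whence their kernels coincide. For $\g_0$ of type~${\sf G}_2$ ($\g$ of type ${\sf D}_4$) or ${\sf C}_n$ with $n$ odd this parity separation fails, and the argument breaks down. Your sketch of~(1), which simply asserts ``by folding $(\C[\h^*]^W)^{\Gamma_0}=\C[\h_0^*]^{W_0}$'', gives only an abstract isomorphism of the targets and does not establish that the \emph{specific} comparison map is an isomorphism; this is precisely where the degree argument is needed.

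\textbf{Coinvariants, not invariants.} The universal base of $\PD_{A,\Gamma_0}$ is the coinvariant algebra $(B_u)_{\Gamma_0}$ (Proposition~\ref{P:dforgetful}), not the invariant subalgebra. Geometrically this is the fixed-point locus $(\h^*/W)^{\Gamma_0}$, but on functions it is a quotient, not a subring.

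\textbf{On part~(2).} Once~(1) is proved, the paper obtains~(2) in one line from Theorem~\ref{T:firstmain}(2) and Theorem~\ref{T:Walgfilteredquant}; no explicit construction of a $\Gamma_0$-action on $U(\g_0,e_0)$ is required. Your cocycle-trivialisation idea is essentially the content of Theorem~\ref{T:qforgetful} (the natural isomorphism $\Qnt_{A,\Gamma_0}\simeq\Qnt_A^{\Gamma_0}$), which is already part of the machinery; you are reinventing it rather than invoking it.
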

After laying the groundwork for the $\Gamma_0$-action in Section~\ref{ss:automorphismsandsubregular} the proof of this theorem is given in Section~\ref{ss:equivariantuniversal}. We expect that the restrictions on $\Delta_0$ are unnecessary and we conjecture that Theorem~\ref{T:thirdmain} holds for all non-simply-laced simple Lie algebras. Theorems~\ref{T:secondmain} and \ref{T:thirdmain} lead to interesting surjective homomorphisms between $W$-algebras, which are new in the literature.
\begin{Corollary}
\label{C:corollarytothethirdmain}
There exists a surjective homomorphism of subregular $W$-algebras
\begin{eqnarray}
U(\g, e) \twoheadrightarrow U(\g_0, e_0).
\end{eqnarray}
When $\g_0$ satisfies the hypotheses of Theorem~\ref{T:thirdmain}, the kernel is generated by elements $z - z\cdot \gamma$ where $\gamma\in \Gamma_0$ and $z\in Z(\g,e)$ lies in the centre.
\end{Corollary}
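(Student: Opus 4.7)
The plan is to deduce both assertions from Theorem~\ref{T:secondmain} together with the classical folding identifications of Slodowy. By Brieskorn--Slodowy the nilpotent parts of the subregular slices in $\g$ and $\g_0$ are both isomorphic to the Kleinian singularity of type $\Delta$; writing $A = \C[\Ss_{\chi,\N}]$, both $U(\g,e)$ and $U(\g_0,e_0)$ are filtered quantizations of $A$, over the bases $\C[\h^*/W]$ and $\C[\h_0^*]^{W_0}$ respectively (as supplied by Sections~\ref{ss:casimirsandcentres} and \ref{ss:universaldeformationoftheslice}). Since $\g$ is simply-laced the subregular orbit $\O$ does not appear in Table~1, so by Theorem~\ref{T:secondmain} $U(\g,e)$ is a universal element of $\Qnt_A$. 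Applying this universal property to $U(\g_0,e_0)$ produces a morphism $\phi\colon \C[\h^*/W] \to \C[\h_0^*]^{W_0}$ together with a filtered isomorphism $U(\g,e)\otimes_{\C[\h^*/W]}\C[\h_0^*]^{W_0}\isoto U(\g_0,e_0)$. The canonical map $a\mapsto a\otimes 1$ thus furnishes a filtered algebra homomorphism $U(\g,e)\to U(\g_0,e_0)$, which is surjective precisely when $\phi$ is.

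For surjectivity of $\phi$, I pass to associated gradeds. By Slodowy's classical theorem \cite[\textsection 8.8]{Slo}, the Poisson deformation $\Ss_{\chi_0}\to\h_0^*/W_0$ is the pullback of $\Ss_\chi\to\h^*/W$ along the closed immersion $\h_0^*/W_0\hookrightarrow \h^*/W$ induced by $\h_0^*=(\h^*)^{\Gamma_0}\subseteq \h^*$; hence $\gr\phi$ is identified with the classical restriction map $\C[\h^*]^W\to\C[\h_0^*]^{W_0}$. This restriction is surjective by a standard case-by-case folding calculation comparing the fundamental degrees of $W$ and $W_0$. Since $\phi$ is a filtered map between split-filtered algebras (Theorem~\ref{T:firstmain}(3)), surjectivity of $\gr\phi$ implies surjectivity of $\phi$, completing the proof of the first assertion.

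Now assume the hypotheses of Theorem~\ref{T:thirdmain}. Then the Dynkin automorphism induces a $\Gamma_0$-action on $U(\g,e)$, while $U(\g_0,e_0)$ carries the trivial $\Gamma_0$-action; under these compatible structures $\phi$ becomes $\Gamma_0$-equivariant with $\Gamma_0$-invariant target. Via the Harish-Chandra identification $Z(\g,e)\cong \C[\h^*/W]$, and using that $Z(\g,e)$ is central, the kernel of $U(\g,e)\twoheadrightarrow U(\g_0,e_0)$ equals the two-sided ideal of $U(\g,e)$ generated by the central ideal $\ker\phi$. Let $J\subseteq \C[\h^*/W]$ denote the ideal generated by $\{z-z\cdot\gamma : z\in\C[\h^*/W],\gamma\in\Gamma_0\}$; the inclusion $J\subseteq\ker\phi$ is immediate from the $\Gamma_0$-equivariance above. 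Conversely, $\C[\h^*/W]/J$ is by construction the maximal quotient on which $\Gamma_0$ acts trivially, hence coincides with the coordinate ring of the scheme-theoretic fixed locus $(\h^*/W)^{\Gamma_0}$; classical folding invariant theory identifies the latter with $\h_0^*/W_0=\Spec\C[\h_0^*]^{W_0}$, and the resulting quotient map is $\phi$ itself. Therefore $\ker\phi=J$, proving the kernel description. The main technical hurdle is the scheme-theoretic fold identification $(\h^*/W)^{\Gamma_0}=\h_0^*/W_0$ together with the $\Gamma_0$-equivariance of the base-change map $\phi$ — compatibilities whose availability is precisely what the hypotheses of Theorem~\ref{T:thirdmain} provide.
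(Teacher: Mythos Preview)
Your overall strategy matches the paper's, but there is a genuine gap in the surjectivity step. You assert that $\gr\phi$ equals Slodowy's classical map $j^*\colon\C[\h^*]^W\to\C[\h_0^*]^{W_0}$ because Slodowy exhibits $\Ss_{\chi_0}$ as the pullback of $\Ss_\chi$ along $j$. But Slodowy's isomorphism $\Ss_{\chi_0}\simeq\Ss_\chi\times_{\h^*/W}\h_0^*/W_0$ is only an isomorphism of \emph{varieties} (it arises from $\C^\times$-semi-universality of $\Ss_\chi$ as a deformation of the variety $\Ss_{\chi,\N}$), whereas $\gr\phi$ is determined by the universal property of $\PD_A$ for \emph{Poisson} deformations. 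The uniqueness clause in the Poisson universal property does not let you identify $\gr\phi$ with $j^*$ unless you first check that Slodowy's isomorphism is Poisson and compatible with the chosen isomorphisms $\iota,\iota_0$ on the central fibre, and neither of these is established. The paper (Lemma~\ref{L:existenceofphi}) circumvents this: $\C^\times$-semi-universality forces only the \emph{linear parts} $d_0(\gr\phi)$ and $d_0 j^*$ to coincide, and an elementary observation (Lemma~\ref{L:auxiliary}: a graded morphism of polynomial rings on positively graded generating spaces is surjective iff its linear part is) then promotes surjectivity of $d_0 j^*$ to surjectivity of $\gr\phi$.

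Your kernel argument also needs tightening. The sentence ``the resulting quotient map is $\phi$ itself'' is unjustified: the folding identification $(\h^*/W)^{\Gamma_0}\simeq\h_0^*/W_0$ produces one specific surjection $\C[\h^*/W]\to\C[\h_0^*]^{W_0}$, and there is no canonical reason it equals $\phi$ (they could differ by an automorphism of the target). What you actually need is $\ker\phi=J$. The paper (Theorem~\ref{T:subregularuniversal_specialcase}) proves this by an explicit degree argument: under the stated restrictions on $\Delta_0$, the Kazhdan degrees of the homogeneous generators of $\C[\h^*/W]$ split into those congruent to $0$ and to $2$ modulo $4$, with the former matching exactly the degrees for $\C[\h_0^*]^{W_0}$; since both $\phi$ and the coinvariant quotient $\alpha_{\Gamma_0}$ are graded surjections onto polynomial rings with these degrees, both kernels must equal the ideal generated by the degree-$2$-mod-$4$ generators. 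Your approach can be repaired via a Hilbert-series comparison (a graded surjection between polynomial rings of equal Hilbert series is an isomorphism), but you must then verify that the $\Gamma_0$-action on $Z(\g,e)$ relevant here --- the one induced by the universal property as in \eqref{e:universalQaction}, not a priori the Dynkin-automorphism action --- has coinvariant algebra of the correct size, which is again what the paper's degree argument supplies.
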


In the final section we apply Corollary~\ref{C:corollarytothethirdmain} to obtain new Yangian-type presentations of $W$-algebras. Since $W$-algebras are defined via quantum Hamiltonian reduction, there is no known presentation in general.
This makes them difficult to work with.
The situation is significantly improved in type {\sf A}: in this case there is an explicit isomorphism between a truncated shifted Yangian and the $W$-algebra \cite{BKshift}.
This leads to an explicit presentation of the subregular finite $W$-algebra in type {\sf B} as a quotient of a truncated shifted Yangian.
\begin{Theorem}
If $e_0 \in \g_0:=\so_{2n+1}$ lies in the subregular orbit then the finite $W$-algebra $U(\g_0, e_0)$ is generated by elements
\begin{eqnarray}
\begin{array}{c}
\{D_1^{(r)}, D_2^{(r)} \mid r>0\} \cup \{E^{(r)} \mid r> 2n-2 \} \cup \{F^{(r)} \mid r> 0\}
\end{array}
\end{eqnarray}
together with relations given in \cite[(2.4)-(2.9)]{BKshift}, along with the relations $D_1^{(r)} = 0 = Z^{(2s - 1)}$ where $r > 1$, $s=1,...,n$, and 
where $Z^{(r)}$ is given by the formula \eqref{e:centreYangian} below.
\end{Theorem}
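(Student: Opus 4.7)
The plan is to combine Corollary~\ref{C:corollarytothethirdmain} with the Brundan--Kleshchev realisation of subregular type-$\mathsf{A}$ finite $W$-algebras as truncated shifted Yangians. First I apply Theorem~\ref{T:thirdmain}(2) and Corollary~\ref{C:corollarytothethirdmain} with $\g_0 = \so_{2n+1}$ of type $\mathsf{B}_n$: the relevant simply-laced unfolding is $\g = \sl_{2n}$ of type $\mathsf{A}_{2n-1}$, with subregular nilpotent $e$ of Jordan type $(2n-1,1)$ and $\Gamma_0 = \Z/2$ generated by the Dynkin diagram involution $\gamma$. This produces a surjection $\pi \colon U(\sl_{2n},e) \twoheadrightarrow U(\so_{2n+1},e_0)$ whose kernel is generated by $\{z - \gamma(z) : z \in Z(U(\sl_{2n},e))\}$.

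Next I invoke \cite{BKshift}: for Jordan type $(2n-1,1)$, $U(\gl_{2n},e)$ is isomorphic to the truncated shifted Yangian $Y_2^{2n-1}(\sigma)$ with $\sigma_{12} = 2n-2$. This produces the stated generators $D_i^{(r)}, E^{(r)}, F^{(r)}$ with the correct indexing ranges and relations (2.4)--(2.9) of \cite{BKshift}; the truncation imposes $D_1^{(r)}=0$ for $r>1$, while passing from $\gl_{2n}$ to $\sl_{2n}$ imposes the single extra relation $Z^{(1)} = 0$, which is the $s=1$ case of the stated central vanishing. By the formula \eqref{e:centreYangian}, the centre of $U(\gl_{2n},e) \cong Y_2^{2n-1}(\sigma)$ is the polynomial algebra on $Z^{(1)}, \ldots, Z^{(2n)}$, so the centre of $U(\sl_{2n},e)$ is freely generated by $Z^{(2)}, \ldots, Z^{(2n)}$.

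The crux is to compute the action of $\gamma$ on these central generators. The Dynkin involution of $\mathsf{A}_{2n-1}$ transports through the Brundan--Kleshchev isomorphism to an involution of $Y_2^{2n-1}(\sigma)$ which interchanges the two rows of the pyramid, swapping $D_1 \leftrightarrow D_2$ and $E \leftrightarrow F$ up to appropriate signs and re-indexing. Applying this involution to the explicit expression \eqref{e:centreYangian} for $Z^{(r)}$ yields the parity statement
\[
\gamma(Z^{(r)}) = (-1)^{r-1} Z^{(r)},
\]
so that the $\gamma$-anti-invariants in the centre are spanned precisely by $\{Z^{(2s-1)} : s = 1, \ldots, n\}$. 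By Corollary~\ref{C:corollarytothethirdmain} the kernel of $\pi$ is the two-sided ideal generated by these elements, which combined with the Yangian relations and truncation produces exactly the presentation claimed in the theorem.

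The main obstacle is the parity computation of the previous paragraph: it requires tracking the Dynkin involution carefully through the Brundan--Kleshchev isomorphism and combining it with the explicit formula \eqref{e:centreYangian} for the central elements. A reassuring sanity check is that the number of odd indices in $\{1,\ldots,2n\}$ is exactly $n$, matching the rank drop $(2n-1) - n = n-1$ from $\sl_{2n}$ to $\so_{2n+1}$ together with the single $\sl_{2n}$-normalisation, so that the count of relations agrees with the dimension of the anti-invariant subspace of the centre.
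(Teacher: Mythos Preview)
Your overall strategy matches the paper's: combine Corollary~\ref{C:corollarytothethirdmain} (the surjection $U(\sl_{2n},e)\twoheadrightarrow U(\so_{2n+1},e_0)$ with kernel generated by $z-\gamma(z)$) with the Brundan--Kleshchev presentation of $U(\gl_{2n},e)$, then identify which central elements get killed. The difference lies entirely in how the $\Gamma_0$-action on the centre is computed, and here your proposal has a genuine gap.

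You assert that the Dynkin involution of $\mathsf{A}_{2n-1}$ transports through the Brundan--Kleshchev isomorphism to an involution of $Y_2^{2n-1}(\sigma)$ that ``interchanges the two rows of the pyramid, swapping $D_1\leftrightarrow D_2$ and $E\leftrightarrow F$''. This cannot be right as stated: the pyramid for Jordan type $(2n-1,1)$ has rows of lengths $2n-1$ and $1$, so no automorphism interchanges them; correspondingly the shift matrix is asymmetric ($s_{1,2}=2n-2$, $s_{2,1}=0$) and the generating sets $\{E^{(r)}:r>2n-2\}$ and $\{F^{(r)}:r>0\}$ have different index ranges, so ``$E\leftrightarrow F$'' is not even well-defined. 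Transporting the outer automorphism to the Yangian side would require conjugating by the element of $\GL_{2n}$ carrying $e$ to $\gamma(e)$, and there is no reason this should act on Yangian generators by a simple permutation. So the parity formula $\gamma(Z^{(r)})=(-1)^{r-1}Z^{(r)}$ is unjustified (and incidentally the exponent should be $r$, not $r-1$, for the odd $Z^{(r)}$ to be the anti-invariants).

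The paper sidesteps this completely by never transporting $\gamma$ to the Yangian. Instead it computes the $\Gamma_0$-action where it is transparent: on $\C[\h^*/W]=\C[e_2,\ldots,e_{2n}]$, where (Example~\ref{E:ABexample}) $\gamma$ sends $x_i\mapsto -x_{2n+1-i}$ and hence $e_j\mapsto(-1)^j e_j$. The kernel of $U(\sl_{2n},e)\to U(\so_{2n+1},e_0)$ is thus generated by the images of $e_3,e_5,\ldots,e_{2n-1}$ under the Harish--Chandra isomorphism $\C[\h^*/W]\isoto Z(\sl_{2n},e)$. The remaining step---matching these images with the $Z^{(2r+1)}$ coming from the Yangian---is handled by invoking \cite[Lemma~3.7]{BKrep}, which identifies the Brundan--Kleshchev central series $Z(u)$ with the elementary symmetric polynomials under the Harish--Chandra map. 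This route is both cleaner and rigorous: it only uses the action of $\gamma$ on the centre, where it factors through the well-understood action on $\h^*$, rather than on the whole $W$-algebra.
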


\subsection{Further applications} We discuss three large families of conic symplectic singularities to which Theorem~\ref{T:firstmain} might be applied in the case $\Gamma = \{1\}$.
\begin{enumerate}
\item[(i)] Braverman--Finkelberg--Nakajima (BFN) have recently introduced a rigorous mathematical definition of the Coulomb branch associated to a class of 3-dimensional $\N = 4$ supersymmetric gauge theories \cite{BFN}. These are algebraic varieties attached to a pair $(G, V)$ where $G$ is a reductive group and $V$ is a $G$-module, which can be naturally quantized,  leading to a Poisson structure. The question of when these varieties have conical symplectic singularities has been a subject of much recent research [27, 35], and a positive answer has recently been given by Gwyn Bellamy \cite{Bel23}. These varieties also come equipped with natural deformations (see \textsection 2(ii) and 3(viii) of {\it loc. cit.}) and whenever the BFN deformation is universal in the sense of Namikawa, our theorem implies that the BFN quantization of the deformation is universal in the sense of Losev.

\item[(ii)] The universal quantization of a symplectic quotient singularity was determined by Losev \cite[Proposition~3.17]{Lo}. It can be constructed by taking the invariants in the spherical symplectic reflection algebra with respect to the Namikawa--Weyl group. Bellamy has previously obtained the Poisson analogue of this theorem \cite[Theorem~1.4]{Bel}, and combining our Theorem~\ref{T:firstmain} with {\it loc. cit.} one  obtains a (less detailed) proof of Losev's description of the universal quantization.

\item[(iii)] Other examples are expected to arise from symplectic reduction. To be more precise, suppose that we have a Hamiltonian action of a reductive group $G$ on a symplectic variety. Suppose furthermore that the Kirwan map on cohomology groups is an isomorphism (Cf. \cite{MN}). Then varying the moment map over the cocentre of $\Lie G$ should give the universal Poisson deformation of the symplectic reduction, and our Theorem~\ref{T:firstmain} suggests a straightforward proof that the quantum symplectic reduction is universal in the sense of Losev. We hope that examples of this phenomenon should be provided by quiver varieties and hypertoric varieties.
\end{enumerate}

\noindent {\bf Acknowledgements:} The authors would like to thank Gwyn Bellamy, Anne Moreau, Travis Schedler and Ben Webster for useful discussions on the subjects of this paper.
The authors thank the referee for careful review and suggestions.
 The research of the first three authors was partially supported by BIRD179758/17 Project ``Stratifications in algebraic groups, spherical varieties, Kac-Moody algebras and Kac-Moody groups''  and DOR1898721/18 ``Sheet e classi di Jordan in gruppi algebrici ed algebre di Lie'' funded by the University of Padova. 
The first author was also supported by FNS 200020{\_}175571, funded by the Swiss National Science Foundation.
The first three authors are members of the INDAM group GNSAGA.
The fourth author is supported  by the UKRI FLF MR/S032657/1, MR/S032657/2, MR/S032657/3 ``Geometric representation theory and $W$-algebras''.

\section{Universal Poisson deformations and filtered quantizations}

\subsection{Representability of functors and universal elements} \label{ss:cattheory}
Let $\mathbf{C}$ be a category. 
For an object $B \in \mathbf{C}$ we denote by $\Aut_{\mathbf{C}}(B)$ the set of automorphisms of $B$ in $\mathbf{C}$.
Let $\mathrm{F} \colon \mathbf{C} \to \mathbf{Sets}$ be a functor.
The functor $\mathrm{F}$ is said to be \emph{representable} if there exists a pair $(B_u, \eta)$ with $B_u \in \mathbf{C}$ and $\eta$  a natural isomorphism of functors $\eta \colon \Hom_{\mathbf{C}}(B_u, -) \to \mathrm{F}(-)$.
 Later in the paper we sometimes express this by saying that {\it $\mathrm{F}(-)$ is representable over $B_u$}.
In this case  $\mathrm{F}$ is said to be \emph{represented by $(B_u, \eta)$} and  $B_u \in \mathbf{C}$ is called a \emph{universal object (or base)} for $\mathrm{F}$, while $\eta$ is called a \emph{representation over $B_u$} of the functor $\mathrm{F}$.
A \emph{universal element} of $\mathrm{F}$ is a datum $(B_u, a)$ of  a universal object $B_u \in \mathbf{C}$ and an element $a \in \mathrm{F}(B_u)$ such that for all $(B, b)$ with $B \in \mathbf{C}$ and $b \in  \mathrm{F}(B)$ there exists a unique morphism $\phi \in \Hom_{\mathbf{C}}(B_u, B)$ satisfying $\mathrm{F}(\phi)(a) = b$,  \cite[III.1]{MacLane}.
For expository convenience, our terminology will slightly differ from the classical one: we will often refer to such an $a \in \mathrm{F}(B_u)$ as a universal element of $\mathrm{F}$, omitting the base when clear from the context.

By Yoneda's Lemma, the representations $\eta$ of $\mathrm{F}$ over a base $B_u$ correspond bijectively to the universal elements of $\mathrm{F}$ in $\mathrm{F}(B_u)$ via the map $\eta \mapsto \eta_{B_u}(\id_{B_u})$.
Once a universal element in $\mathrm{F}(B_u)$ is fixed, the collection of all universal elements of $\mathrm{F}$ in $\mathrm{F}(B_u)$ corresponds bijectively to $\Aut_{\mathbf{C}}(B_u)$;
in other words, the set of universal elements is an $\Aut_{\mathbf{C}}(B_u)$-torsor.

\subsection{Graded and filtered algebras}
\label{ss:grfltalg}

All vector spaces in this paper will be $\C$-vector spaces.
 Unadorned tensor products should be read as tensors over $\C$.
Every algebra in this paper is finitely generated, unless stated otherwise.

When we refer to a filtered vector space, we always mean a  filtration by the non-negative integers satisfying $V = \bigcup_{i \ge 0} V_i$, $V_0 = \C$ and $\dim(V_i) < \infty$ for all $i$. Similar hypotheses are assumed for all graded vector spaces. From a filtered vector space $V = \bigcup_{i\ge 0} V_i$ we can construct the associated graded space $\gr V = \bigoplus_{i \ge 0} V_i / V_{i-1}$, where we take $V_{-1} = 0$ by convention. A graded module over a graded algebra is free graded if it has a homogeneous basis.

We say that a filtered map of filtered vector spaces $\phi \colon V \to W$ is {\it strictly filtered} or {\it strict} if $\phi(V_i) = W_i \cap \phi(V)$. The importance of this definition is that $\gr$ is an exact functor from the category of filtered vector spaces with strict morphisms to the category of graded vector spaces \cite[Proposition~7.6.13]{MR} so that, for instance, a strict filtered embedding induces an embedding of associated graded vector spaces. In this paper every filtered morphism of vector spaces is assumed to be strict. 

When $V$ is a graded vector space we may regard it as filtered in the usual manner, and identify $V$ with $\gr V$ via the obvious splitting. Note that every graded map of graded vector spaces is a strictly filtered map. We shall often need to consider a map $\phi \colon V \to W$ from a graded space to a filtered space, and we call such a map strict if it is so when regarded as a map of filtered spaces. 

\begin{Lemma}
\label{L:flatequalsfree}
Let $A = \bigoplus_{i \ge 0} A_i$ be a finitely generated graded algebra with $A_0 = \C$ and let $M$ be a graded $A$-module. Then $M$ is flat if and only if $M$ is  a free graded module.
\end{Lemma}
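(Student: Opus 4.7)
The plan is to prove the nontrivial direction by combining the graded Nakayama lemma with the standard characterisation of flatness via vanishing of $\mathrm{Tor}_1$. The ``free $\Rightarrow$ flat'' direction is standard, so I focus on the converse.

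First, I would exploit the fact that $A$ is connected graded: the ideal $\maxid = \bigoplus_{i > 0} A_i$ is a (in fact the unique) graded maximal ideal and $A/\maxid \cong \C$. Because every graded module in the paper is bounded below with finite-dimensional homogeneous components, one has the usual graded Nakayama lemma: if $N$ is a graded $A$-module with $N = \maxid N$, then $N = 0$ (take the lowest nonzero degree of $N$ and observe that $(\maxid N)$ lives in strictly higher degrees). This is the technical tool I will invoke twice.

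Given a flat graded $A$-module $M$, I would pick homogeneous elements $\{m_\alpha\} \subset M$ whose images give a homogeneous $\C$-basis of $M/\maxid M$. Let $F = \bigoplus_\alpha A(d_\alpha)$ be the graded free module with generators in the same degrees $d_\alpha = \deg(m_\alpha)$, and let $\phi \colon F \to M$ be the graded map sending each basis element to the corresponding $m_\alpha$. By construction the induced map $F/\maxid F \to M/\maxid M$ is an isomorphism of graded $\C$-vector spaces, so $M = \phi(F) + \maxid M$, hence $M/\phi(F) = \maxid (M/\phi(F))$; graded Nakayama then forces $\phi$ to be surjective.

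Let $K = \ker \phi$, a graded submodule of $F$ (hence bounded below). The flatness of $M$ gives $\mathrm{Tor}_1^A(A/\maxid, M) = 0$, so tensoring the short exact sequence $0 \to K \to F \to M \to 0$ with $A/\maxid$ over $A$ produces an exact sequence
\[
0 \longrightarrow K/\maxid K \longrightarrow F/\maxid F \longrightarrow M/\maxid M \longrightarrow 0.
\]
Since the right-hand map is an isomorphism by our choice of basis, $K/\maxid K = 0$, and a second application of graded Nakayama yields $K = 0$. Therefore $\phi$ is an isomorphism of graded modules and $M \cong F$ is free graded.

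The only delicate point is ensuring that graded Nakayama is genuinely available, i.e.\ that both $M$ and $K$ are bounded below with finite-dimensional graded pieces in each degree; this is built into the paper's convention for graded vector spaces, so no extra hypothesis is needed. I would expect no essential obstacle beyond this bookkeeping.
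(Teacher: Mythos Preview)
Your argument is correct: the combination of graded Nakayama (available because the paper's conventions force all graded objects to be non-negatively graded with finite-dimensional components) with the $\mathrm{Tor}_1$ vanishing is the standard route to ``flat $\Rightarrow$ free graded'' over a connected graded ring, and you have carried it out cleanly. The only minor quibble is that the paper's blanket convention $V_0 = \C$ is really aimed at algebras and does not literally apply to a module $M$; what you actually need (and what the paper surely intends for modules) is just non-negativity and finite-dimensionality of graded pieces, which you use correctly.

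As for comparison with the paper: there is essentially nothing to compare. The paper's proof consists of a single sentence deferring to \cite[Lemma~2.2]{CM}, so you have supplied the details that the authors chose to outsource. Your self-contained argument is presumably the same as (or very close to) what lies behind that citation, and has the advantage of making the role of the connectedness hypothesis $A_0 = \C$ transparent.
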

\begin{proof} This follows directly from \cite[Lemma~2.2]{CM} because the grading on $A$ is  bounded from below.
\end{proof}

\begin{Lemma}
\label{L:grandtensors}
Suppose that $B$ is a commutative filtered algebra and that $C$ and $A$ are commutative filtered $B$-algebras such that the natural maps $B \to A$ and $B\to C$ are strictly filtered. Assume in addition that $\gr A$ is $\gr B$-flat.
There is a natural isomorphism
$$\gr A \otimes_{\gr B} \gr C\isoto \gr(A \otimes_B C)$$
\end{Lemma}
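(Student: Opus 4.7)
The plan is to produce the natural map $\Phi \colon \gr A \otimes_{\gr B} \gr C \to \gr(A \otimes_B C)$ sending $\bar a \otimes \bar c \mapsto \overline{a \otimes c}$ via the tensor-product filtration $(A \otimes_B C)_n = \sum_{i+j \le n} \operatorname{image}(A_i \otimes_{\C} C_j)$, and then to establish injectivity by passing to Rees algebras where the flatness hypothesis can be propagated. Well-definedness of $\Phi$ is routine from the filtered $B$-module structures, and surjectivity is clear since the degree-$n$ piece of the target is the image of $\bigoplus_{i+j = n} A_i \otimes_{\C} C_j$. To handle injectivity I would introduce the Rees algebras $R(A) := \bigoplus_n A_n t^n \subset A[t]$, and similarly $R(B), R(C)$, each a positively graded $\C[t]$-algebra with $t$ a regular central element, satisfying $R(-)/tR(-) = \gr(-)$ and $R(-)/(t-1)R(-) = (-)$ and carrying graded $\C[t]$-algebra maps $R(B) \to R(A), R(C)$ induced by the (strictly filtered) structure maps.

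The crux is to show that $R(A)$ is a free graded $R(B)$-module. Since $\gr A$ is $\gr B$-flat, Lemma~\ref{L:flatequalsfree} furnishes a homogeneous $\gr B$-basis $\{\bar a_i\}$ of $\gr A$ with $\bar a_i$ of degree $d_i$. Lifting to $a_i \in A_{d_i}$ I form the graded $R(B)$-linear map $\Psi \colon \bigoplus_i R(B)(-d_i) \to R(A)$ sending the $i$-th generator to $a_i t^{d_i}$; by construction it reduces modulo $t$ to the basis isomorphism. Because $R(A)$ is $\C[t]$-flat---each inclusion $A_n t^n \hookrightarrow A_{n+1} t^{n+1}$ is injective---the Tor long exact sequence for $- \otimes_{\C[t]} \C[t]/(t)$ applied to $0 \to \ker \Psi \to \bigoplus_i R(B)(-d_i) \to R(A) \to 0$ yields $\ker(\Psi)/t\ker(\Psi) = 0 = \operatorname{coker}(\Psi)/t\operatorname{coker}(\Psi)$, and graded Nakayama for positively graded, bounded-below $R(B)$-modules then forces $\Psi$ to be an isomorphism.

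With $R(A)$ now known to be $R(B)$-flat, set $\widetilde R := R(A) \otimes_{R(B)} R(C)$. Flatness combined with the short exact sequence $0 \to R(C) \xrightarrow{t} R(C) \to \gr C \to 0$ shows that $t$ remains regular on $\widetilde R$. Standard tensor identities yield $\widetilde R/t\widetilde R = \gr A \otimes_{\gr B} \gr C$ and $\widetilde R/(t-1)\widetilde R = A \otimes_B C$, while the induced filtration on the latter coincides, by direct inspection of the degree-$n$ pieces, with the tensor-product filtration. The standard Rees/filtered correspondence for positively graded algebras with regular $t \in \widetilde R_1$ then gives $\gr(A \otimes_B C) = \widetilde R/t\widetilde R = \gr A \otimes_{\gr B} \gr C$, as desired. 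The principal obstacle is the free-lifting step, in which graded Nakayama must be combined with $\C[t]$-flatness of the Rees algebra to lift the flatness from $\gr$ to the Rees level.
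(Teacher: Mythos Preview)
Your proof is correct and self-contained, whereas the paper's proof is much terser and outsources the two key steps to \cite{NVO}. Both begin by using Lemma~\ref{L:flatequalsfree} to upgrade flatness of $\gr A$ over $\gr B$ to graded freeness; the paper then invokes \cite[Lemma~5.1,~$3^\circ$]{NVO} to deduce that $A$ is a free object in filtered $B$-modules, and \cite[Lemma~8.2]{NVO} to conclude directly that the natural map $\varphi$ is a module isomorphism, checking the algebra-homomorphism property by hand. You instead recast the freeness lift as ``$R(A)$ is free graded over $R(B)$'' via graded Nakayama, and then finish with Rees-algebra identities together with the standard $t$-regular correspondence $\gr(\widetilde R/(t-1)\widetilde R) \cong \widetilde R/t\widetilde R$. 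This is essentially an explicit unpacking of what the cited lemmas in \cite{NVO} accomplish; your route makes the mechanism (and the role of $t$-regularity) transparent at the cost of length. One expositional wrinkle: you apply the Tor long exact sequence to $0 \to \ker\Psi \to \bigoplus_i R(B)(-d_i) \to R(A) \to 0$ before establishing that $\Psi$ is surjective, so that sequence is not yet known to be exact on the right. The intended two-step argument---first kill $\operatorname{coker}\Psi$ using right exactness of $\otimes_{\C[t]}\C[t]/(t)$ and graded Nakayama, and only then use $\C[t]$-flatness of $R(A)$ to kill $\ker\Psi$---is standard and clearly what you have in mind, so this does not affect the validity of the argument.
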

\begin{proof}
Since $\gr A$ is flat it is free graded by Lemma~\ref{L:flatequalsfree}. Hence, $A$ is a free object in the category of filtered $B$-modules \cite[Lemma 5.1, $3^\circ$]{NVO}. 
Consider the natural homomorphism of $\gr B$-modules $\varphi\colon \gr A \otimes_{\gr B} \gr C\onto \gr(A \otimes_B C)$ defined on homogeneous elements by $\varphi(\bar a\otimes \bar c)=\overline{a\otimes c}$, where we write $\bar v = v + V_{i-1} \in \gr V$ for the top graded component of $v \in V_i\setminus V_{i-1}$, with $V$ any filtered vector space.
 By \cite[Lemma 8.2]{NVO}, the map $\varphi$ is an isomorphism.
A direct verification shows that it is also an algebra homomorphism.
\end{proof}
Let $B, C, D$ be commutative rings and let $\A$ be a $B$-algebra. Suppose also that $C$ is a $B$-algebra and $D$ is a $C$-algebra. Then $D$ is naturally a $B$-algebra. We make the following notation
\begin{equation}\label{eq:cancel}
\varepsilon_{C} \colon \A \otimes_{B} C \otimes_{C} D \to \A \otimes_{B} D, \quad a \otimes c \otimes d \mapsto a \otimes cd.
\end{equation}

We denote by $\GrAlg$ the category of finitely generated non-negatively graded commutative $\C$-algebras $B = \bigoplus_{i\ge 0} B_i$ such that $B_0 = \C$.
Morphisms between objects of $\GrAlg$ are graded morphisms.
For $B \in \GrAlg$, we write $B_+$ for the unique graded maximal ideal, and $\C_+$ for the corresponding quotient $B/B_+ \simeq \C$.
Similarly, we denote by $\FAlg$ the category of finitely generated non-negatively filtered commutative $\C$-algebras $F$ such that $F_0 = \C$.
Morphisms between objects of $\FAlg$ are strictly filtered morphisms.

There is a functor $\filt \colon \GrAlg \to \FAlg$ which associates to a graded algebra $B$ (resp. a graded morphism) the filtered algebra $B$ with filtration induced by the grading (resp. the same morphism, considered as a filtered morphism).
We will consider the 
subcategory $\SFAlg$ of $\FAlg$ of split filtered algebras, i.e. the essential image of $\GrAlg$ through $\filt$.

Fix $n \in \Nbb$. A Poisson algebra is a commutative algebra $\A$ equipped with a Lie bracket $\{\cdot, \cdot\} \colon \A \times \A \to \A$ which is a biderivation. 
We say that a graded (resp. filtered) Poisson algebra $\A = \bigoplus_{i\ge 0}\A_i$ (resp. $\A = \bigcup_{i \ge 0} \A_i$) has Poisson bracket in degree $-n$ if $\{a,b\} \in \A_{i+j-n}$ for $a \in \A_i$, $b \in \A_j$. If $\A$ is a filtered Poisson algebra with bracket in degree $-n$ then $\gr \A$ is a graded Poisson algebra with bracket in degree $-n$. Similarly we say that a filtered (non-commutative) associative algebra $\A = \bigcup_{i\ge 0}\A_i$ has degree $-n$ if $[a,b] \in \A_{i+j-n}$ whenever $a \in \A_i$ and $b \in \A_j$. Such filtered algebras have the property that $\gr \A$ is a graded Poisson algebra in degree $-n$ via the formula
\begin{eqnarray}\label{eq:Poisson_bracket}
\{a + \A_{i-1}, b + \A_{j-1}\} := [a,b] + \A_{i + j - n - 1}
\end{eqnarray}
whenever $a \in \A_i$, $b \in \A_j$. Similarly, filtered homomorphisms between filtered algebras of degree $-n$ induce graded homomorphisms between Poisson algebras of degree $-n$. These observations can be upgraded to a well-known categorical statement.

\begin{Lemma}
\label{L:grisafunctor}
Taking the associated graded defines an exact functor from the category of strictly filtered algebras of degree $-n$ 
to the category of graded Poisson algebras of degree $-n$. $\hfill \qed$
\end{Lemma}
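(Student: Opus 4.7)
The plan is to verify the claims in stages, most of which reduce to a straightforward unwinding of definitions. First, I would check that for every strictly filtered algebra $A$ of degree $-n$ the associated graded $\gr A$ carries a well-defined graded Poisson bracket in degree $-n$ given by the formula \eqref{eq:Poisson_bracket}. The key point is that the right-hand side is independent of the choice of representatives: if $a' \in A_{i-1}$ and $b \in A_j$, then $[a',b] \in A_{i+j-n-1}$ by the degree $-n$ hypothesis, and similarly for a representative of $b$, so the bracket descends to the quotients $A_i/A_{i-1}$ and $A_j/A_{j-1}$. Bilinearity, skew-symmetry, the Jacobi identity and the Leibniz rule for the bracket on $\gr A$ then follow directly from the corresponding properties of the commutator on $A$.

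Next, to see that a strict filtered homomorphism $\phi \colon A \to B$ between filtered algebras of degree $-n$ induces a graded Poisson homomorphism $\gr \phi \colon \gr A \to \gr B$, I would use strictness of $\phi$, namely $\phi(A_i) = \phi(A) \cap B_i$, which forces $\phi(A_{i-1}) \subseteq B_{i-1}$, so that $\gr \phi$ is well-defined on each graded piece. That $\gr \phi$ is a graded algebra map is immediate, and compatibility with the brackets follows by passing to the quotient $B_{i+j-n}/B_{i+j-n-1}$ in the identity $\phi([a,b]) = [\phi(a), \phi(b)]$. Functoriality (preservation of composition and identity) is automatic from the componentwise definition of $\gr$.

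For exactness, I would invoke \cite[Proposition~7.6.13]{MR}, cited earlier in Section~\ref{ss:grfltalg}, which states that $\gr$ is an exact functor on filtered vector spaces with strict morphisms. A short exact sequence of filtered algebras of degree $-n$ with strict morphisms is, in particular, a short exact sequence in that category of filtered vector spaces, hence the resulting sequence of associated graded vector spaces is exact. Combined with the previous paragraphs, this sequence is then one of graded Poisson algebras in degree $-n$.

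The only step requiring any real verification is well-definedness and compatibility of the Poisson bracket on $\gr A$, since functoriality and exactness are formal consequences of the vector space level statements already recorded in the literature. Even this check is routine once the degree $-n$ condition is unpacked, so I do not anticipate any serious obstacle.
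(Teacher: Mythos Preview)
Your proposal is correct and complete. The paper itself omits the proof entirely (the statement is marked with $\qed$ immediately after), treating the lemma as a standard fact assembled from the observations made just before it together with the cited exactness result \cite[Proposition~7.6.13]{MR}. Your verification fills in precisely those details.

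One minor imprecision: you say that strictness of $\phi$ is what ``forces $\phi(A_{i-1}) \subseteq B_{i-1}$'', but this containment holds for any filtered map by definition; strictness is not needed for $\gr\phi$ to be well-defined as a graded algebra map. Strictness is genuinely needed only at the exactness step, where it ensures that the induced sequence on associated gradeds remains exact. This does not affect the validity of your argument.
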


For $B \in \GrAlg$, when we say that $\A$ is a Poisson $B$-algebra we assume that the structure map $B \to \A$ is a homomorphism whose image is Poisson central.
Recall that, for $B \in \FAlg$ and $\A$ a filtered $B$-algebra, we always assume that the structure map $B \to \A$ is strictly filtered.

\subsection{The Poisson deformation functor} \label{ss:poissondefsection}
For the rest of the section we keep fixed $n\in \Nbb$ and a positively graded, finitely generated, commutative, integral Poisson algebra $A = \bigoplus_{i \ge 0} A_i$ with Poisson bracket in degree $-n$.

A {\it filtered Poisson deformation of $A$} is a pair $(\A, \iota)$ consisting of a filtered Poisson algebra with bracket in degree $-n$, and an isomorphism $\iota : \gr \A \to A$ of Poisson algebras.

By the Rees algebra construction, a filtered Poisson deformation is a special case of a more general notion of Poisson deformation.

\begin{Definition} \label{def_gradedPD}
Let $B \in \GrAlg$. A \emph{graded Poisson deformation of $A$ over $B$} is a pair $(\A, \iota)$ where:
\begin{enumerate}
\item[(i)] $\A$ is a graded Poisson $B$-algebra, flat as a $B$-module;
\item[(ii)] $\iota \colon \A \otimes_B \C_+ \to A$ is an isomorphism of graded Poisson algebras.
\end{enumerate}
\end{Definition}

Two graded Poisson deformations $(\A_1, \iota_1)$ and $(\A_2, \iota_2)$ of $A$ over $B$ are said to be \emph{isomorphic} if there exists a graded Poisson algebra isomorphism $\phi \colon \A_1 \to \A_2$ such that the following diagrams commute
\begin{eqnarray}
\label{e:commsquare}
\begin{tikzcd}
 &  B\arrow[dr] \arrow[dl] & \\
\A_1 \arrow{rr}{\phi} & & \A_2
\end{tikzcd}
\end{eqnarray}
\begin{eqnarray}
\label{e:iotadiagram}
\begin{tikzcd}
 &  A  \\
\A_1 \otimes_{B} \C_+ \arrow{rr}{\phi \otimes \id} \arrow{ur}{\iota_1} &&
\A_2 \otimes_{B} \C_+ \arrow{ul}[swap]{\iota_2}
\end{tikzcd}
\end{eqnarray}

\begin{Remark} \label{R:ses}
Let  $B$ and $(\A,\iota)$ be as in Definition \ref{def_gradedPD} and consider the (split) short exact sequence of $B$-modules $0 \to B_+ \to B \to \C_+ \to 0$.
Since $\A$ is flat over $B$, we obtain another short exact sequence of graded $B$-modules after tensoring with $\A$ over $B$ and using $\iota$: 
\begin{equation} \label{eq_ses}
0 \rightarrow B_+ \A \rightarrow \A \xrightarrow{\pi} A \rightarrow 0.
\end{equation}
Since $B$ is Poisson central in $\A$, $B_+ \A$ is a Poisson ideal in $\A$ and $\pi$ in \eqref{eq_ses} respects the Poisson brackets.
We choose a graded vector subspace $A' \subset \A$ isomorphic to $A$ as a graded vector space and such that $A' \oplus B_+ \A = \A$.
By \cite[Lemma 2.2]{CM}, we conclude that we have an isomorphism of graded $B$-modules $\A \simeq A' \otimes B$ (where $B$ acts on the right tensor factor) and the $n$-th graded component of \eqref{eq_ses} reads:
\begin{equation}
 \label{eq_ses_gr}
0 \to \bigoplus_{i > 0 }^n A'_{n-i} \otimes B_i \to \bigoplus_{i = 0}^n A'_{n-i} \otimes B_i \to A'_n \to 0.
\end{equation}
\end{Remark}

\begin{Definition} \label{def_GPDfunctor}
The {\em functor of graded Poisson deformations of $A$} is defined as follows:
$$\PD_A \colon \GrAlg \to \Sets$$
\begin{enumerate}[label=(\roman*)]
\item for $B \in \GrAlg$, the set $\PD_A(B)$ is the set of isoclasses of graded Poisson deformations $(\A, \iota)$ over $B$;
\item for $\beta \in \Hom_{\GrAlg}(B_1, B_2)$  we set $\PD_A(\beta)$ to be the map associating to the isoclass of
  $(\A_1, \iota_1) \in \PD_A(B_1)$ the isoclass of $(\A_1 \otimes_{B_1} B_2, \iota_1 \circ \varepsilon_{B_2}) \in \PD_A(B_2)$, where $ \varepsilon_{B_2}$ is defined in \eqref{eq:cancel}.
\end{enumerate}
\end{Definition}

When clear from the context, we will denote isoclasses by their representatives.

If  $\PD_A$ is representable, we use the following notation: we denote by $B_u \in \GrAlg$ a fixed universal base, we fix one of its representations over $B_u$ and we set $(\A_u, \iota_u)$ to be the element in $\PD_A(B_u)$ corresponding to $\id_{B_u}$ and call it a \emph{universal graded Poisson deformation of $A$}. 

For the reader's convenience, we explicitly state the universal property of $(\A_u, \iota_u)$, as in \cite[Proposition 2.12]{Lo}: for $B \in \GrAlg$ and $(\A,\iota) \in \PD_A(B)$, there exists a unique $\beta \in \Hom_{\GrAlg}(B_u, B)$ such that $\PD_A(\beta)(\A_u, \iota_u) = (\A, \iota)$ in $\PD_A(B)$, i.e. there exists a $B$-linear graded Poisson isomorphism $\phi \colon \A_u \otimes_{B_u} B \to \A$ such that the following diagram (see \eqref{e:iotadiagram}) commutes:
\begin{center}
\begin{tikzcd}                                                                            & A &                                                                       \\
\A_u \otimes_{B_u} B \otimes_B \C_+ \arrow[ru, "\iota_u \circ \varepsilon_B"] \arrow[rr, "\phi \otimes \id"]&   & \A \otimes_{B} \C_+ \arrow[lu, "\iota"'] 
\end{tikzcd}
\end{center}
where $\varepsilon_B$ is defined in \eqref{eq:cancel}.

\begin{Remark}\label{rk_iso_outside}
Note that the notation $\varepsilon_C$ in \eqref{eq:cancel} actually suppresses the choice of homomorphism $\varphi : B \to C$ making $C$ a $B$-algebra. In this Remark we write $\varepsilon_C := \varepsilon_{C, \varphi}$. Let $B \in \GrAlg$ and $\beta \in \Aut_{\GrAlg}(B)$ and consider $\PD_A(\beta) (\A, \iota) = (\A \otimes_B B, \iota \circ \varepsilon_{B, \beta}) \in \PD_A(B)$.
Then $\A$ and $\A \otimes_B B$ are isomorphic as graded Poisson algebras and both deform $A$ but $(\A, \iota)$ and $(\A \otimes_B B, \iota \circ \varepsilon_{B, \beta})$ are generally distinct elements of $\PD_A(B)$, since the isomorphism need not be $B$-linear, i.e., the diagram in \eqref{e:commsquare} may not commute.

Suppose $\PD_A$ is represented by $B_u \in \GrAlg$, then the choice of a universal graded Poisson deformation is not unique: the graded automorphisms of $B_u$ correspond bijectively to elements of $\PD_A(B_u)$, each of which can serve as a choice of universal element. See \S \ref{ss:cattheory} for further details.
%
\end{Remark}

\begin{Example}
\label{E:nullconedeformation}
Let $G$ be a simply-connected complex semisimple group, $\g$ its Lie algebra with Cartan subalgebra $\h$ and Weyl group $W$. The coordinate ring $\C[\g^*]$ is naturally a graded Poisson algebra, and the Poisson centre coincides with $\C[\g^*]^G$ which is isomorphic to $\C[\h^*]^W = \C[\h^*/W]$ by Chevalley's restriction theorem. One may choose the isomorphism to ensure that it is an isomorphism of graded algebras. 
Set $A = \C[\N(\g^*)]$ to be the coordinate ring of the nullcone.
This is graded via the contracting $\C^\times$-action on $\g^*$ and a famous theorem of Kostant \cite[Proposition~7.13]{JaNO} says that the defining ideal of $\N(\g^*)$ in $\C[\g^*]$ is generated by $\C[\g^*]^G_+$.
Hence $A$ is a positively graded Poisson algebra and there is an isomorphism $\iota\colon \C[\g^*] \otimes_{\C[\h^*/W]} \C_+ \to A$.
By another theorem of Kostant $\C[\g^*]$ is a free $\C[\g^*]^G$-module \cite[Theorem~0.2]{Ko1} and so $(\C[\g^*], \iota) \in \PD_{A}(\C[\h^*/W])$ is a Poisson deformation of the coordinate ring of the nullcone.
\end{Example}

\subsection{The filtered quantization functor}
\label{ss:quantizationdsection}
We continue to fix $n\in \Nbb$ and $A$, and we remind the reader that all filtered maps in this paper are assumed to be  strict.
Our goal here is to define a functor similar to $\PD_A$ classifying filtered quantizations of $A$.
Recall that for a filtered algebra of degree $-n$, the associated graded algebra carries a Poisson structure via \eqref{eq:Poisson_bracket}. 

\begin{Definition}\label{D:quant}
Let $B \in \SFAlg$. A {\it filtered quantization (of a Poisson deformation) of $A$ over $B$} is a pair $(\Q, \iota)$ where:
\begin{enumerate}
\item[(i)] $\Q$ is a filtered $B$-algebra of degree $-n$, flat as a $B$-module;
\item[(ii)] $(\gr \Q, \iota) \in \PD_A(\gr B)$, where the structure map  $\gr B \to \gr \Q$ is induced by the structure map $B \to \Q$.
\end{enumerate}
Once again, we call $B$ the {\it base} of the quantization.
\end{Definition}


Two quantizations $(\Q_1, \iota_1)$ and $(\Q_2, \iota_2)$ of $A$ over $B$ are said to be \emph{isomorphic}  if there exists a filtered isomorphism $\phi \colon \Q_1 \to \Q_2$ such that:
\begin{enumerate}
\item[(i)] the following diagram commutes:
\begin{eqnarray}
\label{e:qommsquare}
\begin{tikzcd}
& B \arrow[dr] \arrow[dl] &  \\
\Q_1 \arrow{rr}{\phi} & & \Q_2
\end{tikzcd}
\end{eqnarray}
\item[(ii)] $\gr \phi \colon \gr \Q_1 \to \gr \Q_2$ gives an isomorphism between $(\gr \Q_1, \iota_1)$ and $(\gr \Q_2, \iota_2)$ as graded Poisson deformations of $A$ over $B$.
\end{enumerate}

\begin{Definition} \label{D:qntfunct}
The {\em functor of filtered quantizations of $A$} is defined as follows:
$$\Qnt_A \colon \SFAlg \to \Sets$$
\begin{enumerate}[label=(\roman*)]
\item for $B \in \SFAlg$, the set $\Qnt_A(B)$ is the set of isoclasses of filtered quantizations $(\Q, \iota)$ over $B$;
\item for $\beta \in \Hom_{\SFAlg}(B_1, B_2)$, the morphism $\Qnt_A(\beta)$ maps the isoclass of $(\Q_1, \iota_1) \in \Qnt_A(B_1)$ to the isoclass of $(\Q_1 \otimes_{B_1} B_2, \iota_1 \circ \varepsilon_{\gr B_2}) \in \Qnt_A(B_2)$, where $\varepsilon_{\gr B_2}$ is defined in \eqref{eq:cancel}.
\end{enumerate}
\end{Definition}

When $\Qnt_A$ is representable, we denote by $B_u \in \SFAlg$ a fixed universal base, we fix one of its representations over $B_u$  and we set  $(\Q_u, \iota_u) \in \Qnt_A(B_u)$ to be the universal element corresponding to $\id_{B_u}$ and call it a \emph{universal filtered quantization of $A$}.

Then $(\Q_u, \iota_u) \in \Q_A(B_u)$ satisfies the universal property in \cite[Proposition 3.5]{Lo}, which we restate in the functorial language:
for all $B \in \SFAlg$ and  $(\Q, \iota) \in \Q_A(B)$, there exists a unique morphism $\beta \in \Hom_{\SFAlg}(B_u, B)$ such that
$(\Q, \iota) = \Qnt_A(\beta)(\Q_u, \iota_u) = (\Q_u \otimes_{B_u} B, \iota \circ \varepsilon_{\gr B})$ as elements in $\Qnt_{A}(B)$, where $\varepsilon_{\gr B}$ is defined in \eqref{eq:cancel}.

A quantum analogue of Remark \ref{rk_iso_outside} applies.

\begin{Example}
\label{E:nullconequantization}
Consider the enveloping algebra $U(\g)$ which is filtered of degree $-1$ with $\gr U(\g) \simeq\C[\g^*]$,  and as usual denote the centre of $U(\g)$ by $Z(\g)$. Thanks to the Harish--Chandra restriction theorem we know that there is a choice of grading on $Z(\g)$ such that we have graded isomorphisms $Z(\g) \simeq \C[\h^*]^{W_\bullet} = \C[\h^*/W_\bullet]\simeq\C[\h^*]^W$ (the filtered maps are clearly strictly filtered), see Section \ref{ss:casimirsandcentres} for a more detailed account.
 Furthermore $\gr Z(\g)\simeq Z(\g)$ identifies with $\C[\g^*]^G \subseteq \C[\g^*]$ as a subalgebra of $\gr U(\g)$. 

In virtue of Example~\ref{E:nullconedeformation}, from which we retain notation, we have $(U(\g),  \iota) \in \Qnt_A(\C[\h^*/W_\bullet])$, with $A = \C[\N(\g^*)]$. 
The construction depends on a choice of grading on $Z(\g)$, which is the same as fixing a choice of strictly filtered isomorphism $\C[\h^*]^W \to Z(\g)$ and these various choices of isomorphism lead to different isomorphism classes of quantization of $A$.
\end{Example}

\subsection{Interplay between deformations and quantizations} \label{ss:interplay}
There is a natural relationship between the two functors we have introduced above: this is given by the natural transformation $\graded \colon \Qnt_A \to (\PD_A \circ \gr)$. For $B \in \SFAlg$, the map $\graded_B \colon \Qnt_A(B) \to \PD_A (\gr B)$ is defined by $(\Q, \iota) \mapsto (\gr \Q, \iota)$.
This relation is extremely nice under the following assumptions, which hold for conic symplectic singularities (introduced by Beauville \cite{Be}): these are the main family of Poisson varieties to which we intend to apply the results of this paper.

\begin{Definition} \label{D:oc}
We say that $A$ admits an \emph{optimal quantization theory} (OQT) if the following conditions are satisfied:
\begin{enumerate}[label=(\roman*)]
\item The functor $\Qnt_A$ is representable over $B_u \in \SFAlg$;
\item The functor $\PD_A$ is representable over $\gr B_u \in \GrAlg$;
\item There exist a representation $\eta$ of $\Qnt_A$ over $B_u$, resp. $\zeta$ of $\PD_A$ over $\gr B_u$, such that the following diagram of natural transformations commutes:
\begin{center}
\begin{tikzcd}
\Hom_{\SFAlg}(B_u, -)   \arrow[d,,"\graded"']  \arrow[r, "\eta"] & \Qnt_{A}(-) \arrow[d,,"\graded"] \\
  \Hom_{\GrAlg}(\gr B_u, \gr -) \arrow[r, "\zeta"] &  \PD_A \gr(-)
\end{tikzcd}
\end{center}
where, for $B \in \SFAlg$, we set $\graded_B \colon \Hom_{\SFAlg}(B_u, B) \to \Hom_{\GrAlg}(\gr B_u, \gr B)$ to be the map $\beta \mapsto \gr \beta$.
\end{enumerate}
\end{Definition} 
\begin{Remark}\label{R:oc}
An OQT is equivalent to the existence of universal bases $B_u$ of $\Qnt_A$, resp. $C_u$ of $\PD_A$, and universal elements $(\Q_u, \iota_{\Q}) \in \Qnt_A(B_u)$, resp. $(\A_u, \iota_{\A}) \in \PD_A(C_u)$, and a graded isomorphism $\phi \in \Hom_{\GrAlg}(C_u, \gr B_u)$ (which is necessarily unique) such that $\PD_A(\phi)(\A_u, \iota_{\A}) = (\gr \Q_u, \iota_{\Q})$.
\end{Remark}

\begin{Lemma}\label{L:interplay}
Assume $A$ admits an OQT. Let $B \in \SFAlg$ and $(\Q, \iota) \in \Qnt_A(B)$. Suppose that $(\gr \Q, \iota) \in \PD_A(\gr B)$ is a universal element of $\PD_A$. Then $(\Q, \iota)$ is a universal element of $\Qnt_A$.
\end{Lemma}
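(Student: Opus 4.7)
The plan is to reduce universality of the filtered quantization to universality of its associated graded by exploiting the commutative square of natural transformations guaranteed by the OQT.

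First, I would use Remark~\ref{R:oc} to fix a universal element $(\Q_u, \iota_\Q) \in \Qnt_A(B_u)$ whose associated graded $(\gr \Q_u, \iota_\Q) \in \PD_A(\gr B_u)$ is itself a universal element of $\PD_A$. By the universal property of $(\Q_u, \iota_\Q)$, applied to $(\Q, \iota)$, there exists a unique $\beta \in \Hom_{\SFAlg}(B_u, B)$ such that $\Qnt_A(\beta)(\Q_u, \iota_\Q) = (\Q, \iota)$. Applying $\graded$ and using the commutative diagram in Definition~\ref{D:oc}, I get $\PD_A(\gr \beta)(\gr \Q_u, \iota_\Q) = (\gr \Q, \iota)$. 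Both the source and the target are by assumption universal elements of $\PD_A$, and universal elements sitting over the same functor are related by an isomorphism of bases (see the discussion in~\S\ref{ss:cattheory}); uniqueness of the morphism extracting one from the other then forces $\gr \beta$ to be an isomorphism in $\GrAlg$.

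Next, I would promote $\gr \beta$ being an isomorphism to $\beta$ itself being an isomorphism in $\SFAlg$. Since $B_u, B \in \SFAlg$ and $\beta$ is strict by assumption, this is a standard induction on the filtration degree: strictness ensures $\beta$ restricts to strict maps of each piece of the filtration and that $\gr \beta$ is the graded of $\beta$, so bijectivity on associated graded pieces (combined with the finite-dimensionality of each filtered piece coming from our standing hypotheses in~\S\ref{ss:grfltalg}) yields bijectivity of $\beta$ on every $B_{u,i}$, and the inverse is automatically strictly filtered.

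Finally, with $\beta\colon B_u \to B$ an isomorphism in $\SFAlg$, I would transport the representation of $\Qnt_A$ by $B_u$ across $\beta$ to a representation by $B$ and read off that $(\Q,\iota)$ is the associated universal element. Concretely, for $B' \in \SFAlg$ and $(\Q', \iota') \in \Qnt_A(B')$, the unique $\psi \in \Hom_{\SFAlg}(B_u, B')$ given by universality of $(\Q_u, \iota_\Q)$ produces $\phi := \psi \circ \beta^{-1} \in \Hom_{\SFAlg}(B, B')$ with $\Qnt_A(\phi)(\Q, \iota) = (\Q', \iota')$, and uniqueness follows from uniqueness of $\psi$.

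The main obstacle I anticipate is the passage from $\gr \beta$ being an isomorphism to $\beta$ being one: one needs to be sure that the hypothesis $B, B_u \in \SFAlg$ (split filtered) together with strictness of $\beta$ genuinely suffice, and that no subtle completion or exhaustiveness issue intervenes. Everything else is essentially a diagram chase once the OQT setup is in place.
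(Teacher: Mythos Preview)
Your proposal is correct and follows essentially the same approach as the paper: both obtain a unique $\SFAlg$-morphism $B_u \to B$ from universality of $(\Q_u,\iota_\Q)$, pass to the associated graded via the OQT square to see that $\gr$ of this map relates two universal elements of $\PD_A$ and is therefore an isomorphism, and then conclude that the original filtered map is an isomorphism. You supply slightly more detail than the paper on the last step (lifting the graded isomorphism to a filtered one and transporting the representation), but the argument is the same.
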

\begin{proof}
Consider the universal base $B_u \in \SFAlg$ from OQT axiom (i) and fix a universal element $(\Q_u, \iota_u) \in \Qnt_A(B_u)$. 
There exists a unique $\varphi \in \Hom_{\SFAlg}(B_u, B)$ such that $\Qnt_A(\varphi)(\Q_u, \iota_u) = (\Q, \iota)$.
Now OQT axiom (iii) implies that $(\gr \Q, \iota) = \graded(\Qnt_A(\varphi))(\Q_u, \iota_u) = \PD_A(\gr \varphi)(\gr \Q_u, \iota_u)$.
By OQT (ii), there exists a unique $\beta \in \Hom_{\GrAlg}(\gr B_u, \gr B)$ such that $\PD_A(\beta)(\gr \Q_u, \iota_\Q) = (\gr \Q, \iota)$ and $\beta$ is an isomorphism, by the assumption of universality of $(\gr \Q, \iota)$.
Uniqueness yields $\beta = \gr \varphi$.
We conclude that $\varphi$ is an isomorphism, hence $(\Q, \iota)$ is a universal element of $\Qnt_A$.
\end{proof}

\subsection{Automorphisms of $A$ and of its deformations} \label{ss_autdef}

Denote by $\PAut_{\GrAlg}(A)$  the group of graded Poisson automorphisms of $A$. 
For $B \in \GrAlg$ and $(\A, \iota) \in \PD_A(B)$, we denote by $\Aut_{B-\GrAlg}(\A)$ the group of $B$-linear graded automorphisms of $\A$.
Moreover, we write $\PAut_{B-\GrAlg}(\A)$ for the subgroup of automorphisms in $\Aut_{B-\GrAlg}(\A)$ respecting the Poisson brackets.

We retain  the surjection $\pi \colon \A \to \A/B_+ \A$ from Remark \ref{R:ses} and we identify  the algebras $\A/B_+ \A \simeq A \otimes_B \C_+ \simeq A$ through $\iota$.
We define a map
\begin{equation}\label{eq:p}
p \colon \PAut_{B-\GrAlg}(\A) \to \PAut_{\GrAlg}(A).
\end{equation}
by setting $p (\gamma) \in \PAut_{\GrAlg}(A)$ to be the map defined by $p (\gamma)(a+ B_+\A):=\gamma(a)+B_+\A$ for $a\in\A$.
Since the elements of $\PAut_{B-\GrAlg}(\A)$ preserve the Poisson ideal $B_+ \A$, for $\gamma \in \PAut_{B-\GrAlg}(\A)$, the map is well-defined.

\begin{Lemma}\label{L:propertyp}
Let $B \in \GrAlg$ and $(\A, \iota) \in \PD_A(B)$. Then the following hold:
\begin{enumerate}[label=(\arabic*)]
\item $\PAut_{\GrAlg}(A)$ and $\PAut_{B-\GrAlg}(\A)$ are linear algebraic groups.
\item The map $p$ defined in \eqref{eq:p} is a morphism of linear algebraic groups.
\item The kernel of $p$ is  unipotent and $\ker p = \PAut_{B - \GrAlg}(\A, \iota)$,  the automorphism group of the graded Poisson deformation $(\A,\iota)$ over $B$.
\end{enumerate}
\end{Lemma}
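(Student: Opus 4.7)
The plan is to realize both groups as closed subgroups of finite products of general linear groups, and then to analyse $\ker p$ using the $B_+$-adic filtration on $\A$.

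First, for (1) and (2), I would proceed as follows. Since $A$ is finitely generated and positively graded with $A_0=\C$, choose a finite set of homogeneous generators of degrees $d_1,\dots,d_m$. Any graded Poisson automorphism of $A$ is determined by its image on these generators, and the conditions (multiplicativity, respect for the Poisson bracket, invertibility) are polynomial equations on elements of the finite-dimensional spaces $A_{d_1},\dots,A_{d_m}$; this realizes $\PAut_{\GrAlg}(A)$ as a closed subgroup of $\prod_i \GL(A_{d_i})$, hence a linear algebraic group. For $\PAut_{B-\GrAlg}(\A)$, first lift the chosen generators of $A$ to homogeneous elements $\tilde a_1,\dots,\tilde a_m\in\A$. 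By graded Nakayama, applied using $\A_0=\C$ (consequence of $\A\simeq A'\otimes B$ in Remark \ref{R:ses}), the $\tilde a_i$ generate $\A$ as a $B$-algebra. Since each $\A_{d_i}=\bigoplus_{j=0}^{d_i}A'_{d_i-j}\otimes B_j$ is finite-dimensional, the same reasoning produces a closed embedding $\PAut_{B-\GrAlg}(\A)\hookrightarrow\prod_i \A_{d_i}$ cut out by polynomial conditions. The map $p$ is clearly a well-defined group homomorphism, as any $\gamma\in \PAut_{B-\GrAlg}(\A)$ preserves the Poisson ideal $B_+\A$. Reading $p$ in the above coordinates, it is given by the linear projection $\A_{d_i}\to \A_{d_i}/(B_+\A\cap\A_{d_i})\simeq A_{d_i}$ on each factor, hence a morphism of algebraic groups.

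For (3), the identification $\ker p=\PAut_{B-\GrAlg}(\A,\iota)$ is a direct unwinding of definitions: commutativity of diagram \eqref{e:iotadiagram} with $(\A_1,\iota_1)=(\A_2,\iota_2)=(\A,\iota)$ requires exactly that the induced map on $\A\otimes_B\C_+\simeq A$ be the identity, which is the condition defining $\ker p$.

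The main work is the unipotence claim. I would filter $\A$ by the $B_+$-adic filtration $F^k\A:=B_+^k\A$. By flatness of $\A$ over $B$, the associated graded satisfies $F^k\A/F^{k+1}\A\simeq A\otimes_\C (B_+^k/B_+^{k+1})$. Any $\gamma\in\ker p$ is $B$-linear, hence preserves the filtration, and the induced map on $F^k/F^{k+1}$ is of the form $\bar\gamma\otimes\id$, where $\bar\gamma$ is the induced map on $F^0/F^1=A$. But $\gamma\in\ker p$ forces $\bar\gamma=\id_A$, so $(\gamma-\id)(F^k\A)\subseteq F^{k+1}\A$ for every $k\ge 0$. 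Now restrict to the finite-dimensional graded piece $\A_n$: from the decomposition $\A_n=\bigoplus_{i=0}^n A'_{n-i}\otimes B_i$ one sees that $F^k\A\cap \A_n=\bigoplus_{i\ge k} A'_{n-i}\otimes B_i$, which vanishes for $k>n$. Thus the filtration stabilizes in finitely many steps on each $\A_n$, and $(\gamma-\id)^{n+1}=0$ there. Under the embedding into $\prod_i \GL(\A_{d_i})$ from the first paragraph, every element of $\ker p$ acts as a unipotent operator on each factor, so $\ker p$ is a closed subgroup consisting entirely of unipotent elements, therefore unipotent.

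The main obstacle is justifying that both groups are genuinely finite-dimensional algebraic objects despite $A$ and $\A$ being graded algebras with infinitely many nonzero degrees; the graded Nakayama argument above, together with finite generation of $A$, is what makes this work.
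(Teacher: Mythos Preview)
Your proof is correct and follows essentially the same strategy as the paper's own argument. Both realize the automorphism groups as closed subgroups of a finite product of general linear groups (the paper uses $\GL(\bigoplus_{j=0}^N \A_j)$, you use $\prod_i \GL(\A_{d_i})$, which amounts to the block-diagonal subgroup), and both establish unipotence of $\ker p$ by showing that $\gamma - \id$ raises the $B_+$-adic filtration; the paper phrases this tersely as ``an induction argument on the graded components'' invoking Remark~\ref{R:ses}, while you make the filtration $F^k\A = B_+^k\A$ and the termination on each $\A_n$ explicit. One small typo: your embedding for $\PAut_{B-\GrAlg}(\A)$ should land in $\prod_i \GL(\A_{d_i})$, not $\prod_i \A_{d_i}$.
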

\begin{proof}
Since $A$ is a finitely generated algebra, 
$\Aut_{\GrAlg}(A)$ can be identified with a closed subgroup of $\GL(V)$ for some finite dimensional vector space $V$, i.e.  $\Aut_{\GrAlg}(A)$
is a (possibly disconnected) linear algebraic group; see also \cite[\S 3.7]{Lo}.
By a similar argument, $\Aut_{B-\GrAlg}(\A)$ is a linear algebraic group.
Since automorphisms respecting the Poisson brackets define a Zariski-closed subset, we have proven (1).

The map $p$ is clearly a group homomorphism.
We prove that it is a morphism of algebraic varieties: there is  $N\geq 0$ such that  $\Aut_{B-\GrAlg}(\A)$ and $\Aut_{B-\GrAlg}(A)$ can be identified as closed subgroups of
$\GL(\bigoplus_{j=0}^N\A_j)$ and $\GL(\bigoplus_{j=0}^NA_j)$, respectively.
Now, as vector spaces $\bigoplus_{j=0}^N\A_j \simeq  (\bigoplus_{j=0}^N\A_j \cap B_+\A)\oplus \bigoplus_{j=0}^NA_j$ and $p$ maps the automorphism $\gamma$ to its restriction to $\bigoplus_{j=0}^NA_j$. This completes the proof of (2).

Finally, we describe $\ker p$.  Observe that $\gamma\in \ker p$ if and only if $\iota\circ (\gamma\otimes \id)=\iota$, that is,  $\gamma$ is a $B$-linear Poisson graded automorphism of $\A$ inducing the identity on the central fibre $A$.
In particular, and in view of Remark \ref{R:ses}, the map $\gamma$ is an automorphism of the graded $B$-module $A \otimes B$ and it satisfies $\pi \circ \gamma = \pi$. 
Combining $\ker \pi = B_+ \A$ with an induction argument on the graded components of $\gamma$ proves that $\gamma$ is unipotent, hence $\ker p$ is unipotent. This concludes the proof of (3).
\end{proof}

The algebraic group $\PAut_{\GrAlg}(A)$ admits a Levi decomposition by Mostow's Theorem, see \cite[VIII, Theorem 4.3]{Hoch}: we denote by $\G$ the (possibly disconnected) reductive part of $\PAut_{\GrAlg}(A)$.
Throughout this section, we fix a reductive subgroup $\Gamma \leq \G$.

Let $B \in \GrAlg$. For any $(\A, \iota) \in \PD_A(B)$, we define the  subgroup of  $\PAut_{B-\GrAlg}(\A)$:
\begin{equation}\label{eq:gammatilde}
\widetilde{\Gamma}(\A, \iota) := \{\gamma \in \PAut_{B-\GrAlg}(\A) \mid  \iota \circ (\gamma \otimes \id) = g \circ \iota \mbox{ for some } g \in \Gamma\}.
\end{equation}
The group \eqref{eq:gammatilde} can be described quite transparently with fewer symbols: these are the $B$-linear graded Poisson automorphisms of $\A$ which restrict to elements of $\Gamma$ on the central fibre $A$; to phrase it another way, we consider automorphisms $\gamma$ of $\A$ such that $\pi \circ \gamma = g \circ \pi$ for some $g \in \Gamma$.

For $B \in \GrAlg$, $(\A, \iota) \in \PD_A(B)$ and $g \in \Gamma$, we define the $g$-twisted deformation $^g(\A, \iota) = (\A, g \circ \iota )$: this yields a (left) action of $\Gamma$ on the set $\PD_A(B)$.
We remark that the existence of $\gamma\in\widetilde{\Gamma}(\A, \iota)$ restricting to $g\in\Gamma$ on $A$ is equivalent to the condition  $^g(\A,\iota)=(\A,\iota)$ in $\PD_A(B)$.

\begin{Definition}\label{D:gammastabledef}
Let $B \in \GrAlg$.
A \emph{$\Gamma$-stable Poisson graded deformation of $A$ over $B$} is an element $(\A, \iota) \in \PD_A(B)$ such that 
$^g{(\A, \iota)} = (\A, \iota)$ as elements of $\PD_A(B)$, for all $g \in \Gamma$.

For $B \in \GrAlg$, let $\PD_A(B)^\Gamma$ denote the subset of isoclasses of $\Gamma$-stable graded deformation of $A$ over $B$.
Then for $B' \in \GrAlg$ and $\beta \in \Hom_{\GrAlg}(B, B')$, we have $\PD_A(\beta) (\PD_A(B)^\Gamma) \subset \PD_A(B')^\Gamma$.
Hence we can define \emph{the functor of $\Gamma$-stable Poisson graded deformations of $A$}: $$\PD_A^\Gamma \colon \GrAlg \to \Sets$$ associating to  $B \in \GrAlg$ the set $\PD^\Gamma_A(B) := \PD_A(B)^\Gamma$ and operating on morphisms as per Definition \ref{def_GPDfunctor}(ii).
\end{Definition}

\begin{Lemma} \label{L:semidirect}
Let $B \in \GrAlg$, $(\A,\iota) \in \PD^\Gamma_A(B)$ and let $\widetilde{\Gamma} := \widetilde{\Gamma}(\A, \iota)$. 
Then $\widetilde \Gamma$ is a linear algebraic group with unipotent radical $U =\ker p$ and Levi decomposition $\widetilde{\Gamma} \simeq \Gamma \ltimes U$.
\end{Lemma}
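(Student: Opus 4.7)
The plan is to realise $\widetilde{\Gamma}$ as an extension
\begin{equation*}
1 \to U \to \widetilde{\Gamma} \xrightarrow{p|_{\widetilde\Gamma}} \Gamma \to 1
\end{equation*}
with unipotent kernel, and then to apply Mostow's theorem to obtain a Levi splitting. First I would observe that, by Lemma~\ref{L:propertyp}(1)--(2), the map $p \colon \PAut_{B-\GrAlg}(\A) \to \PAut_{\GrAlg}(A)$ is a morphism of linear algebraic groups and $\Gamma$ is closed in $\PAut_{\GrAlg}(A)$; since $\widetilde{\Gamma} = p^{-1}(\Gamma)$, it is a closed subgroup of $\PAut_{B-\GrAlg}(\A)$, hence itself a linear algebraic group.

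Next I would verify that $p|_{\widetilde\Gamma}$ is surjective onto $\Gamma$, which is where the $\Gamma$-stability hypothesis enters. Given $g \in \Gamma$, the equality $^g(\A,\iota) = (\A,\iota)$ in $\PD_A(B)$ produces a $B$-linear graded Poisson automorphism $\gamma$ of $\A$ fitting in the commutative diagram \eqref{e:iotadiagram} with $\iota_1 = g \circ \iota$ and $\iota_2 = \iota$, which unpacks to $\iota \circ (\gamma \otimes \id) = g \circ \iota$. A short computation using $\pi(a) = \iota(a \otimes 1)$ then shows that $p(\gamma)$ and $g$ coincide on the image of $\pi$, so $p(\gamma) = g$ and $\gamma \in \widetilde{\Gamma}$ is a lift of $g$. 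The kernel $\ker(p|_{\widetilde\Gamma})$ coincides by construction with $\ker p = U$, and $U$ is unipotent by Lemma~\ref{L:propertyp}(3).

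Finally, since $\widetilde{\Gamma}/U \simeq \Gamma$ is reductive, any normal unipotent subgroup of $\widetilde{\Gamma}$ must project trivially under $p|_{\widetilde\Gamma}$ and hence lie in $U$, so $U$ is the full unipotent radical of $\widetilde{\Gamma}$. I would then invoke Mostow's theorem \cite[VIII, Theorem 4.3]{Hoch} to obtain a reductive Levi subgroup $L \leq \widetilde{\Gamma}$ with $\widetilde{\Gamma} = L \ltimes U$; the restriction $p|_L \colon L \to \widetilde{\Gamma}/U \simeq \Gamma$ is then an isomorphism of algebraic groups, yielding the claimed decomposition $\widetilde{\Gamma} \simeq \Gamma \ltimes U$. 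The main obstacle I anticipate is the surjectivity step: the hypothesis $(\A,\iota) \in \PD_A^\Gamma(B)$ is only an equality of isoclasses, and one must carefully extract the intertwining automorphism $\gamma$ from this datum and verify compatibility with $p$. The remaining steps are essentially formal consequences of Lemma~\ref{L:propertyp} and standard structure theory of linear algebraic groups in characteristic zero.
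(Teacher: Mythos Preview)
Your proposal is correct and follows essentially the same route as the paper: identify $\widetilde{\Gamma}=p^{-1}(\Gamma)$ as a closed subgroup, establish the short exact sequence $1\to U\to\widetilde{\Gamma}\to\Gamma\to 1$ with $U=\ker p$ unipotent via Lemma~\ref{L:propertyp}(3), and invoke Mostow's theorem. You are in fact more explicit than the paper about the surjectivity of $p|_{\widetilde\Gamma}$ (which the paper handles by the remark preceding Definition~\ref{D:gammastabledef}) and about why $U$ is the full unipotent radical, but these are elaborations of the same argument rather than a different approach.
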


\begin{proof}
%
%
%
Let $p$ be as in \eqref{eq:p}, then the group $\widetilde \Gamma$ defined in \eqref{eq:qgammatilde} is precisely $p^{-1}(\Gamma)$.
This is nonempty, because $(\A,\iota) \in \PD^\Gamma_A(B)$ and it is a closed (hence algebraic) subgroup of $\PAut_{B- \GrAlg}(\A)$.
We set $U := \ker p$ and observe that it is contained in $\tilde \Gamma$.
The last assertion follows from Lemma \ref{L:propertyp} (3), the short exact sequence $1 \to U \to \widetilde \Gamma \xrightarrow{p} \Gamma \to 1$ and Mostow's Theorem \cite[VIII, Theorem 4.3]{Hoch}.
\end{proof}


\begin{Remark} \label{R:univisequiv}
Suppose that $\PD_A$ is representable and retain notation from \S \ref{ss:poissondefsection}.
Let $g \in \Gamma$, and consider $(\A_u, g \circ \iota_u) \in \PD_A(B_u)$.
By representability, there exists a unique morphism $\alpha_g \in \Hom_{\GrAlg}(B_u,B_u)$ such that $\PD_A(\alpha_g)(\A_u,  \iota_u) = (\A_u, g \circ \iota_u)$. Thus, for each $g\in \Gamma$, there is an isomorphism of deformations $\widetilde{\alpha}_g\colon \A_u\otimes_{B_u}B_u\to \A_u$ inducing the equality $\PD_A(\alpha_g)(\A_u,  \iota_u) = (\A_u, g \circ \iota_u)$.
We claim that the collection of morphisms $(\alpha_g)_{g \in \Gamma}$ yields a right action of $\Gamma$ on $B_u$. Indeed, for $h, g \in \Gamma$, 
we  have:
\begin{align*}
\PD_A(\alpha_h) \PD_A(\alpha_g) (\A_u, \iota_u) & =  \PD_A(\alpha_h)(\A_u,  g \circ \iota_u)  =(\A_u \otimes_{B_u} B_u,  g \circ \iota_u \circ \varepsilon_{B_u})\\
& = (\A_u \otimes_{B_u} B_u,  g \circ h\circ \iota_u \circ (\widetilde{\alpha}_h\otimes \id))= (\A_u, g \circ h \circ \iota_u)  =\PD(\alpha_{gh})(\A_u, \iota_u),
\end{align*}
where  $\varepsilon_{B_u}$ is as in \eqref{eq:cancel} and  all equalities should be read as equalities of isoclasses.
Uniqueness of $\alpha_g, \alpha_h$ and $\alpha_{gh}$ yields the proof of the claim.
\end{Remark}

\begin{Definition}\label{D:equivstr_def}
Let $B \in \GrAlg$ and $(\A, \iota) \in \PD_A(B)$.
Set $\widetilde{\Gamma}:= \widetilde{\Gamma}(\A, \iota)$, as in \S \ref{ss_autdef}.
A \emph{$\Gamma$-structure on $(\A, \iota)$} is a group morphism $s \colon \Gamma \to \widetilde{\Gamma}$ such that $p \circ s = \id_\Gamma$.
\end{Definition}
Namely, the morphism $s \colon \Gamma \to \widetilde{\Gamma}$ gives a family of commutative diagrams, for $g \in \Gamma$:
\begin{center}
\begin{tikzcd}
A \arrow[r, "g"] & A  \\
\A   \otimes_{B} \C_+  \arrow[r, "s(g) \otimes \id"] \arrow{u}{\iota} &
\A   \otimes_{B} \C_+  \arrow[u,"\iota"']
\end{tikzcd}
\end{center}

\begin{Definition} \label{def_gammagpd}
Let $B \in \GrAlg$.
A \emph{graded Poisson $\Gamma$-deformation of $A$ over $B$} is a triple $(\A, \iota, s)$
such that:
\begin{enumerate}[label=(\roman*)]
\item $(\A, \iota) \in \PD_A(B)$;
\item $s$ is a $\Gamma$-structure on $(\A, \iota)$.
\end{enumerate}
\end{Definition}

We say that two  graded Poisson $\Gamma$-deformations $(\A_1, \iota_1, s_1)$ and $(\A_2, \iota_2, s_2)$ of $A$ over $B$ are \emph{isomorphic} if  there exists a  graded Poisson algebra isomorphism $\phi \colon \A_1 \to \A_2$ such that \eqref{e:commsquare} and \eqref{e:iotadiagram} commute, and $\phi \circ s_1(g) = s_2(g) \circ \phi$ for all $g \in \Gamma$.

\begin{Definition} \label{def_gammafunctor}
The {\em functor of graded Poisson $\Gamma$-deformations of $A$} is defined as follows:
$$\PD_{A, \Gamma} \colon  \GrAlg \to \Sets$$
\begin{enumerate}[label=(\roman*)]
\item for $B \in \GrAlg$, the set $\PD_{A,\Gamma}(B)$ consists of isoclasses of graded Poisson $\Gamma$-deformations $(\A, \iota, s)$ over $B$;
\item for $\beta \in \Hom_{\GrAlg}(B_1, B_2)$, the morphism $\PD_{A,\Gamma}(\beta)$ maps $(\A_1, \iota_1, s_1) \in \PD_{A,\Gamma}(B_1)$ to $(\A_2, \iota_2, s_2) \in \PD_{A,\Gamma}(B_2)$ where $(\A_2,\iota_2) = \PD_A(\beta)(\A_1, \iota_1)$ and, for $g \in \Gamma$ we set $s_2(g) = s_1(g) \otimes \id$.
\end{enumerate}
\end{Definition}

We define the forgetful natural transformation
$\Forg \colon \PD_{A,\Gamma} \to \PD_A$ as follows: for $B \in \GrAlg$, the component $\Forg_B \colon \PD_{A,\Gamma}(B) \to \PD_A(B)$ maps $(\A, \iota, s) \mapsto (\A, \iota)$.
The following is one of the most important general results of this paper.
\begin{Theorem} \label{T:dforgetful} 
The natural transformation $\Forg \colon \PD_{A,\Gamma} \to \PD_A$ factors through $\PD_A^\Gamma$ and induces a natural isomorphism between the functors $\PD_{A, \Gamma}$ and $\PD_A^\Gamma$.
\end{Theorem}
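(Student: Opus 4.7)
My plan is to use the Levi decomposition of $\widetilde{\Gamma}(\A, \iota)$ provided by Lemma~\ref{L:semidirect} as the bridge between $\Gamma$-structures and $\Gamma$-stability. First I would check that $\Forg$ factors through $\PD_A^\Gamma$: given $(\A, \iota, s) \in \PD_{A,\Gamma}(B)$, the identity $p(s(g)) = g$ from Definition~\ref{D:equivstr_def} unwinds via \eqref{eq:gammatilde} into $\iota \circ (s(g) \otimes \id) = g \circ \iota$, which is precisely an isomorphism ${}^g(\A, \iota) \isoto (\A, \iota)$ of graded Poisson deformations of $A$ over $B$. Hence $(\A, \iota) \in \PD_A^\Gamma(B)$ and the induced natural transformation $\PD_{A,\Gamma} \to \PD_A^\Gamma$, which I continue to denote by $\Forg$, is well defined.

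To show that $\Forg_B \colon \PD_{A,\Gamma}(B) \to \PD_A^\Gamma(B)$ is surjective on isoclasses, I would take $(\A, \iota) \in \PD_A^\Gamma(B)$ and observe that the assumption ${}^g(\A, \iota) = (\A, \iota)$ in $\PD_A(B)$ means that for every $g \in \Gamma$ there exists $\gamma_g \in \widetilde{\Gamma}(\A, \iota)$ with $p(\gamma_g) = g$, so $p$ is surjective. Lemma~\ref{L:semidirect} then provides a Levi decomposition $\widetilde{\Gamma}(\A, \iota) \simeq \Gamma \ltimes U$ with $U = \ker p$, and any Levi complement yields a section $s$ of $p$, i.e.\ a $\Gamma$-structure on $(\A, \iota)$. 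The triple $(\A, \iota, s) \in \PD_{A,\Gamma}(B)$ then maps to $(\A, \iota)$ under $\Forg_B$.

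For injectivity on isoclasses, let $(\A_j, \iota_j, s_j) \in \PD_{A,\Gamma}(B)$ for $j = 1, 2$ map to the same element of $\PD_A^\Gamma(B)$. After transporting $s_1$ along an isomorphism of Poisson deformations $\phi \colon (\A_1, \iota_1) \isoto (\A_2, \iota_2)$ (a short calculation confirms that $g \mapsto \phi \circ s_1(g) \circ \phi^{-1}$ is again a $\Gamma$-structure on $(\A_2, \iota_2)$), I may assume $(\A_1, \iota_1) = (\A_2, \iota_2) =: (\A, \iota)$, whereupon $s_1(\Gamma)$ and $s_2(\Gamma)$ are two Levi subgroups of $\widetilde{\Gamma}(\A, \iota)$. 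By the conjugacy part of Mostow's theorem \cite[VIII, Theorem~4.3]{Hoch}, there exists $u \in U = \ker p$ with $s_2(g) = u \, s_1(g) \, u^{-1}$ for all $g \in \Gamma$. By Lemma~\ref{L:propertyp}(3), $u$ is a $B$-linear graded Poisson automorphism of $\A$ inducing the identity on the central fibre, so it provides the desired isomorphism $(\A, \iota, s_1) \to (\A, \iota, s_2)$ of Poisson $\Gamma$-deformations.

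Naturality under base change is routine: for $\beta \colon B_1 \to B_2$ the assignment $g \mapsto s(g) \otimes \id$ is a $\Gamma$-structure on $\PD_A(\beta)(\A, \iota)$, as prescribed by Definition~\ref{def_gammafunctor}, and the same tensoring shows that the constructions of Steps~2 and~3 are compatible with morphisms of bases. The main obstacle is the injectivity step, which leans squarely on the conjugacy part of Mostow's theorem together with a careful check that the conjugating unipotent element is an isomorphism in the correct category; the identical argument confirms that the choice of Levi complement in the surjectivity step does not affect the resulting isoclass, so that the inverse natural transformation is intrinsically defined.
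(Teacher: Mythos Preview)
Your proof is correct and follows essentially the same approach as the paper: factorization via the identity $\iota \circ (s(g) \otimes \id) = g \circ \iota$, surjectivity by choosing a Levi section in the decomposition $\widetilde{\Gamma}(\A,\iota) \simeq \Gamma \ltimes U$ from Lemma~\ref{L:semidirect}, and injectivity by the conjugacy of Levi complements (Mostow's theorem) together with Lemma~\ref{L:propertyp}(3). You are slightly more explicit than the paper in transporting $s_1$ along the isomorphism $\phi$ before reducing to a common $(\A,\iota)$, and in checking naturality under base change, but the argument is otherwise identical.
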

\begin{proof}
Let $B \in \GrAlg$.
The image of $\Forg_B$ is contained in $\PD_A^\Gamma(B)$. 
Indeed, by definition of $\Gamma$-structure, for each $g \in \Gamma$, the graded Poisson automorphism $s(g)$ is $B$-linear and satisfies $\iota \circ (s(g) \otimes \id) = g \circ \iota$, i.e. \eqref{e:commsquare} and \eqref{e:iotadiagram} commute.

We prove that $\Forg_B \colon  \PD_{A, \Gamma}(B) \to \PD_A^\Gamma(B)$ is surjective.
Let $(\A,\iota)$ be an object in $\PD_A^\Gamma(B)$ and consider $p$ from \eqref{eq:p}.
 By Lemma \ref{L:semidirect}, from which we retain notation, the map $p$ restricts to a surjective algebraic group morphism  $p\colon \widetilde{\Gamma}(\A,\iota)\simeq \Gamma\ltimes U\to \Gamma$.
 Any section $s\colon\Gamma\to \widetilde{\Gamma}(\A,\iota)$ thus endows $(\A,\iota)$ with a $\Gamma$-structure.

Finally, we prove that $\Forg_B \colon  \PD_{A, \Gamma}(B) \to \PD_A^\Gamma(B)$ is injective. 
For $i =1,2$, let $(\A_i, \iota_i, s_i) \in \PD_{A, \Gamma}(B)$ such that $(\A_1, \iota_1)$ and $(\A_2, \iota_2)$ are isomorphic graded Poisson deformations over $B$.
Without loss of generality, we can assume that $(\A_i, \iota_i) = (\A, \iota)$ for $i = 1,2$. 
Now,  by Lemma \ref{L:semidirect} and  Lemma \ref{L:propertyp} (3),  all sections of $p$, that is, all splittings of the short exact sequence $1 \to U \to \widetilde{\Gamma} \xrightarrow{p} \Gamma \to 1$ are conjugate by an element of $U$. 
Hence, there is $\phi \in \PAut_{B-\GrAlg}(\A)$ conjugating the $\Gamma$-structure $s_1$ to $s_2$, i.e. $(\A, \iota, s_1)$ and  $(\A, \iota, s_2)$ are isomorphic as graded $\Gamma$-deformations.
\end{proof}

As a consequence  we can identify the functors $\PD_{A,\Gamma}$ and $\PD_A^\Gamma$; in particular, for $B \in \GrAlg$, we may omit the $\Gamma$-structure when writing an element of $\PD_{A,\Gamma}(B)$.
%

We recall the right $\Gamma$-action on $B_u$ defined in Remark \ref{R:univisequiv} and we set $(B_u)_{\Gamma}$ to be the quotient of $B_u$ modulo the homogeneous ideal generated by $\{b - b \cdot \gamma \mid b \in B_u, \gamma \in \Gamma \}$,  the algebra of $\Gamma$-coinvariants of $B_u$.
Geometrically, the $\Gamma$-action on $B_u$ yields a $\Gamma$-action on $\Spec B_u$ and one has the equality $\Spec ((B_u)_\Gamma) = (\Spec B_u)^\Gamma$.

\begin{Proposition}\label{P:dforgetful}
Denote by $\alpha_\Gamma \colon B_u \to (B_u)_\Gamma$ the quotient map onto the coinvariant algebra.
If $\PD_A$ is representable over $B_u$, 
then  $\PD_{A}^{\Gamma}$ is representable over $(B_u)_{\Gamma}$ and  $\PD_A(\alpha_\Gamma)(\A_u, \iota_u)$ is a universal element of $\PD_A^\Gamma$. 
\end{Proposition}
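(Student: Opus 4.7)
The plan is to verify that $\PD_A(\alpha_\Gamma)(\A_u,\iota_u)$ is $\Gamma$-stable, and then to deduce universality from that of $B_u$ by showing that the classifying morphism out of $B_u$ automatically descends to the coinvariant quotient. At the outset, note that $(B_u)_\Gamma$ lies in $\GrAlg$: each $\alpha_g$ from Remark~\ref{R:univisequiv} is graded, so the defining ideal $\langle b - b\cdot\gamma \mid b\in B_u,\,\gamma\in\Gamma\rangle$ is homogeneous, and $(B_u)_\Gamma$ inherits a non-negative grading with $\C$ in degree zero, and is finitely generated. For $\Gamma$-stability, I would exploit that $\alpha_\Gamma\circ\alpha_g=\alpha_\Gamma$ in $\GrAlg$, by construction of the coinvariant quotient. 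Applying $\PD_A$ and combining with $\PD_A(\alpha_g)(\A_u,\iota_u) = (\A_u, g\circ\iota_u)$ from Remark~\ref{R:univisequiv} yields
\[
\PD_A(\alpha_\Gamma)(\A_u, \iota_u) \;=\; \PD_A(\alpha_\Gamma)(\A_u, g\circ\iota_u),
\]
whose right-hand side is, unpacking Definition~\ref{def_GPDfunctor}(ii), precisely the $g$-twist of the left-hand side. Hence the element is $\Gamma$-stable and, by Theorem~\ref{T:dforgetful}, can be enhanced to an element of $\PD_A^\Gamma((B_u)_\Gamma)$, which is the proposed universal element.

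For universality, fix $(\A,\iota)\in\PD_A^\Gamma(B)$ and invoke representability of $\PD_A$ over $B_u$ to obtain the unique $\gamma\in\Hom_{\GrAlg}(B_u,B)$ with $\PD_A(\gamma)(\A_u,\iota_u)=(\A,\iota)$. The key computation is: for every $g\in\Gamma$,
\[
\PD_A(\gamma\circ\alpha_g)(\A_u,\iota_u) \;=\; \PD_A(\gamma)(\A_u, g\circ\iota_u) \;=\; {}^{g}(\A,\iota) \;=\; (\A,\iota),
\]
where the middle step uses that $g$-twisting commutes with the pushforward $\PD_A(\gamma)$ (immediate, since isomorphisms of deformations intertwine $g$-twists of $\iota$), and the last equality is the $\Gamma$-stability hypothesis. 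The uniqueness clause in the universal property of $B_u$ therefore forces $\gamma\circ\alpha_g = \gamma$ for every $g\in\Gamma$, so $\gamma$ kills the defining ideal of $(B_u)_\Gamma$ and descends to a unique $\beta\in\Hom_{\GrAlg}((B_u)_\Gamma,B)$ with $\beta\circ\alpha_\Gamma=\gamma$. Functoriality gives $\PD_A(\beta)(\PD_A(\alpha_\Gamma)(\A_u,\iota_u)) = \PD_A(\gamma)(\A_u,\iota_u) = (\A,\iota)$; uniqueness of $\beta$ then follows from the surjectivity of $\alpha_\Gamma$ together with the uniqueness of $\gamma$.

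The main subtlety is the bridge between the isoclass-level condition ${}^g(\A,\iota)=(\A,\iota)$ appearing in the definition of $\Gamma$-stability and the algebra-morphism identity $\gamma\circ\alpha_g=\gamma$. This bridge is furnished by the sharpness of the universal property of $B_u$: although a deformation is determined only up to isomorphism, its classifying morphism is unique as a graded algebra map, which is precisely what converts an isomorphism of deformations into an equality of morphisms. With that translation in hand, the argument reduces to the universal property of the coinvariant quotient in $\GrAlg$.
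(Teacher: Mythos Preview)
Your proof is correct and follows essentially the same approach as the paper's: obtain the unique classifying morphism $B_u\to B$ from representability of $\PD_A$, use $\Gamma$-stability together with the uniqueness clause to show it is $\Gamma$-invariant, and then invoke the universal property of the coinvariant quotient. Your version is in fact more thorough, as you explicitly verify that $(B_u)_\Gamma\in\GrAlg$, that the proposed element is $\Gamma$-stable, and that the induced morphism $\beta$ is unique---points the paper leaves implicit.
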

\begin{proof}
Let $(\A,\iota)$ be a $\Gamma$-stable graded Poisson deformation.
By representability of $\PD_A$, there exists a unique map $\beta \in \Hom_{\GrAlg}(B_u, B)$ such that $(\A, \iota) = \PD_A(\beta)(\A_u, \iota_u)$.
We claim that $\beta = \bar \beta \circ \alpha_\Gamma$ for a unique $\bar \beta \in \Hom_{\GrAlg}((B_u)_\Gamma, B)$.
Let $(\alpha_g)_{g \in \Gamma}$ be the collection of morphisms introduced in Remark \ref{R:univisequiv}.
Then: $$\PD_A(\beta \circ \alpha_g)(\A_u, \iota_u)  = \PD(\beta)(\A_u, g \circ \iota_u) = (\A, g \circ \iota) = (\A, \iota) = \PD(\beta)(\A_u, \iota_u)$$  where in the penultimate equality we used $\Gamma$-stability of $(\A, \iota)$. 
The claim follows from uniqueness of the map $\beta$ and the universal property of the coinvariant algebra. 
We conclude that $(\A, \iota) = \PD_A^\Gamma (\bar \beta) \PD_A (\alpha_\Gamma)(\A_u , \iota_u)$.
\end{proof}

\subsection{Automorphisms of $A$ and of its quantizations} \label{ss_autqnt}
We continue to work with a fixed reductive  subgroup $\Gamma \leq \G$ as in \S \ref{ss_autdef}. The current section can be considered the quantum counterpart of the previous section.
For $B \in \SFAlg$, we have an action of $\Gamma$ on $\Qnt_A(B)$: for $(\Q, \iota) \in \Qnt_A(B)$ and $g \in \Gamma$, we define the $g$-twisted quantization $^g(\Q, \iota) = (\Q, g \circ \iota )$.

A quantum version of Remark \ref{R:univisequiv} holds. In particular if $\Qnt_A$ is representable then $\Gamma$ admits a right action on the universal base $B_u$ which we again  denote
\begin{eqnarray}
\label{e:universalQaction}
\begin{array}{rcl}
\Gamma & \longrightarrow \Aut_{\SFAlg}(B_u),
& g \longmapsto \alpha_g.
\end{array}
\end{eqnarray}

\begin{Definition}\label{D:gammastableqnt}
Let $B \in \SFAlg$.
A \emph{$\Gamma$-stable filtered quantization of $A$ over $B$} is an element $(\Q, \iota) \in \Qnt_A(B)$ such that 
$^g{(\Q, \iota)} = (\Q, \iota)$ for all $g \in \Gamma$.

For $B \in \SFAlg$, let $\Qnt_A(B)^\Gamma$ denote the subset in $\Qnt_A(B)$ consisting of isoclasses of $\Gamma$-stable filtered quantizations of $A$ over $B$.
Then for $B' \in \SFAlg$ and $\beta \in \Hom_{\SFAlg}(B, B')$,  we have $\Qnt_A(\beta) (\Qnt_A(B)^\Gamma) \subset \Qnt_A(B')^\Gamma$.
Hence we can define \emph{the functor of $\Gamma$-stable filtered quantizations of $A$}: $$\Qnt_A^\Gamma \colon \SFAlg \to \Sets$$ associating to  $B \in \SFAlg$ the set $\Qnt^\Gamma_A(B) := \Qnt_A(B)^\Gamma$ and operating on morphisms as per Defintion \ref{D:qntfunct} (ii).
\end{Definition}

Let $B \in \SFAlg$ and $(\Q, \iota) \in \Qnt_A(B)$.
Denote by $\Aut_{B-\FAlg}(\Q)$ the set of $B$-linear filtered automorphisms of $\Q$: for $\gamma \in \Aut_{B-\FAlg}(\Q)$, we have $\gr \gamma \in \PAut_{\gr B-\GrAlg}(\gr \Q)$.
In view of this, for any $(\Q, \iota) \in \Qnt_A(B)$, we define:
\begin{equation}\label{eq:qgammatilde}
\widetilde{\Gamma}_q(\Q, \iota) := \{\gamma \in \Aut_{B-\FAlg}(\Q) \mid \gr \gamma \in \widetilde{\Gamma}(\gr \Q, \iota) \}.
\end{equation}

\begin{Remark}\label{R:algebraic}
The map $\gr\colon \Aut_{B-\FAlg}(\Q)\to\PAut_{\gr B-\GrAlg}(\gr \Q)$ is a  group morphism by functoriality of $\gr$.
 By choosing $N$ sufficiently large we can (and shall) identify $\Aut_{B-\FAlg}(\Q)$ and $\PAut_{\gr B-\GrAlg}(\gr \Q)$ with closed subgroups of 
$\GL(\Q_N)$ and $\GL(\bigoplus_{j=0}^N(\gr \Q)_j)$ respectively.
Then, for any splitting of the vector space $\bigcup_{j=0}^N\Q_j=\bigoplus_{j=0}^NR_j$ with $R_0=\Q_0$ and  $R_j$ a complement of $R_{j-1}$ in $\Q_j$, the group  $\Aut_{B-\FAlg}(\Q)$ is identified with a subgroup of block upper triangular matrices. 
In addition, any such splitting is compatible with the grading of the vector space $\bigoplus_{j=0}^N(\gr \Q)_j$, and with respect to these splittings, $\gr$ is identified with the algebraic morphism mapping a block upper triangular matrix to its block diagonal part.
Hence $\gr\colon \Aut_{B-\FAlg}(\Q)\to\PAut_{\gr B-\GrAlg}(\gr \Q)$ is a morphism of algebraic groups with unipotent kernel.
\end{Remark}

\begin{Lemma} \label{L:semidirect_q}
Let $B \in \SFAlg$, $(\Q,\iota) \in \Qnt_A^\Gamma(B)$ and  $\widetilde{\Gamma}_q := \widetilde{\Gamma}_q(\Q, \iota)$, and let $p$ be as in \eqref{eq:p} for $(\gr\Q,\iota)$.
Then $\widetilde{\Gamma}_q$ is a linear algebraic group with Levi decomposition  $\widetilde{\Gamma}_q= \Gamma \ltimes U_q$ where $U_q=\ker(p \circ \gr)$  is the automorphism group of the filtered quantization $(\Q,\iota)$ over $B$. 
\end{Lemma}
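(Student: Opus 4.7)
The plan is to model the proof on Lemma \ref{L:semidirect}, using the composition $p \circ \gr$ in place of $p$ and exploiting Remark \ref{R:algebraic} to pass algebraic structure between the filtered and graded worlds. Concretely, unwinding definition \eqref{eq:qgammatilde} together with \eqref{eq:gammatilde} gives
\[
\widetilde{\Gamma}_q(\Q,\iota) \;=\; (p \circ \gr)^{-1}(\Gamma) \;\subseteq\; \Aut_{B-\FAlg}(\Q).
\]
Since $p$ is a morphism of linear algebraic groups by Lemma \ref{L:propertyp}(2) and $\gr$ is a morphism of linear algebraic groups by Remark \ref{R:algebraic}, the preimage of the closed subgroup $\Gamma \leq \PAut_{\GrAlg}(A)$ is a closed subgroup, so $\widetilde{\Gamma}_q$ is a linear algebraic group. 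This also shows $U_q = \ker(p \circ \gr)$ is a closed normal subgroup of $\widetilde{\Gamma}_q$.

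Next I would identify $U_q$ with the automorphism group of the filtered quantization $(\Q,\iota)$. An element $\phi \in \Aut_{B-\FAlg}(\Q)$ is an automorphism of the filtered quantization precisely when $\gr \phi$ is an automorphism of the graded Poisson deformation $(\gr \Q, \iota)$, which by Lemma \ref{L:propertyp}(3) amounts to $\gr \phi \in \ker p$, i.e.\ $\phi \in \ker(p \circ \gr) = U_q$. Unipotence of $U_q$ then follows by combining the two unipotent kernels at our disposal: $\ker \gr$ is unipotent by Remark \ref{R:algebraic} and $\ker p$ is unipotent by Lemma \ref{L:propertyp}(3), so $U_q$ sits in a short exact sequence of algebraic groups whose outer terms are unipotent and is therefore unipotent itself.

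The key remaining step is to verify surjectivity of $p \circ \gr \colon \widetilde{\Gamma}_q \to \Gamma$. This is where the hypothesis $(\Q,\iota) \in \Qnt_A^\Gamma(B)$ enters: for each $g \in \Gamma$, the relation ${}^g(\Q,\iota) = (\Q,\iota)$ in $\Qnt_A(B)$ provides a filtered $B$-linear isomorphism $\phi_g \colon \Q \to \Q$ such that $\gr \phi_g$ is an isomorphism of graded Poisson deformations between $(\gr \Q, \iota)$ and $(\gr \Q, g \circ \iota)$. Tracing this through \eqref{e:iotadiagram} and \eqref{eq:gammatilde} shows $\gr \phi_g \in \widetilde{\Gamma}(\gr\Q,\iota)$ with $p(\gr \phi_g) \in \Gamma$, so $\phi_g \in \widetilde{\Gamma}_q$; varying $g$ and using that $\Gamma$ is a group yields surjectivity. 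I expect this translation between the isoclass-level condition of $\Gamma$-stability and concrete automorphisms in $\widetilde{\Gamma}_q$ to be the subtlest point, analogous to the surjectivity argument in the proof of Lemma \ref{L:semidirect}.

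Having assembled a short exact sequence
\[
1 \longrightarrow U_q \longrightarrow \widetilde{\Gamma}_q \xrightarrow{\,p \circ \gr\,} \Gamma \longrightarrow 1
\]
of linear algebraic groups with $U_q$ unipotent and $\Gamma$ reductive, Mostow's theorem \cite[VIII, Theorem 4.3]{Hoch} provides a Levi decomposition $\widetilde{\Gamma}_q \simeq \Gamma \ltimes U_q$, exactly as in Lemma \ref{L:semidirect}. This completes the proof sketch.
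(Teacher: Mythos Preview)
Your proposal is correct and follows essentially the same approach as the paper's proof: both identify $\widetilde{\Gamma}_q$ as the preimage of $\Gamma$ under $p\circ\gr$, deduce it is algebraic via Remark~\ref{R:algebraic} and Lemma~\ref{L:propertyp}, prove $U_q$ is unipotent from the short exact sequence $1\to\ker\gr\to U_q\to\gr(U_q)\to 1$ (with $\gr(U_q)\leq\ker p$), invoke $\Gamma$-stability for surjectivity of $p\circ\gr$, and finish with Mostow's theorem. Your presentation is in fact slightly more explicit than the paper's about the surjectivity step and the identification of $U_q$ with the automorphism group of $(\Q,\iota)$.
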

\begin{proof}
Set $\widetilde{\Gamma} := \widetilde{\Gamma}(\gr \Q, \iota)$. 
Observe that $\widetilde{\Gamma}_q = \gr^{-1}(\widetilde{\Gamma})$ is a closed (hence algebraic) subgroup of $\Aut_{B-\FAlg}(\Q)$, by Remark \ref{R:algebraic}.
We know that $p$ from \eqref{eq:p} restricts to a surjective algebraic group morphism $p \colon \widetilde{\Gamma} \to \Gamma$.
By Remark \ref{R:algebraic}, from which we adopt notation, $p \circ \gr \colon \widetilde{\Gamma}_q \to \Gamma$ is  a morphism of algebraic groups. It is surjective because we assumed $(\Q,\iota) \in \Qnt_A^\Gamma(B)$.
We set $U_q = \ker(p \circ \gr)=\{g\in \widetilde{\Gamma}(\gr \Q, \iota)~|~\gr(g)\in U\}$.  By construction, it is  the automorphism group of the filtered quantization $(\Q,\iota)$ over $B$. We have a short exact sequence of algebraic groups $1 \to \ker \gr \to U_q \to \gr(U_q) \to 1$ where $\gr(U_q)\leq U$ and $\ker \gr$ are unipotent. By Mostow's Theorem, we conclude that $U_q$ is unipotent.   As a byproduct, we also derive the remaining assertions. 
\end{proof}


\begin{Definition}\label{D:equivstr_q}
Let $B \in \SFAlg$ and $(\Q, \iota) \in \Qnt_A(B)$.
Set $\widetilde{\Gamma}_q := \widetilde{\Gamma}_q(\Q, \iota)$ as in \S \ref{ss_autqnt}.
A \emph{$\Gamma$-structure on $(\Q, \iota)$} is a group morphism $s \colon \Gamma \to \widetilde{\Gamma}_q$ such that $p \circ \gr \circ s = \id_\Gamma$.
\end{Definition}

\begin{Definition} \label{def_gammaq}
Let $B \in \SFAlg$. A \emph{filtered $\Gamma$-quantization of $A$ over $B$} is a triple $(\Q, \iota, s)$
such that
\begin{enumerate}
\item[(i)] $(\Q, \iota) \in \Qnt_A(B)$;
\item[(ii)] $s$ is a $\Gamma$-structure on $(\Q, \iota)$.
\end{enumerate} 
\end{Definition}

We say that two filtered  $\Gamma$-quantizations $(\Q_1, \iota_1, s_1)$ and $(\Q_2, \iota_2, s_2)$ of $A$ over $B$ are \emph{isomorphic} if  there exists a $B$-linear filtered algebra isomorphism $\phi \colon \Q_1 \to \Q_2$ 
such that \eqref{e:iotadiagram} commutes for $(\gr\Q_1,\iota_1)$ and $(\gr \Q_2,\iota_2)$ and $\phi \circ s_1(g) = s_2(g) \circ \phi$ for all $g \in \Gamma$.

\begin{Definition} \label{def_gammaQfunctor}
The {\em functor of  filtered $\Gamma$-quantizations of $A$} is defined as follows:
$$\Qnt_{A,\Gamma} \colon \SFAlg \to \Sets$$
\begin{enumerate}[label=(\roman*)]
\item for $B \in \SFAlg$, the set $\Qnt_{A,\Gamma}(B)$ consists of isoclasses of filtered $\Gamma$-quantizations $(\Q, \iota, s)$ over $B$;
\item for $\beta \in \Hom_{\SFAlg}(B_1, B_2)$, the morphism $\Qnt_A(\beta)$ maps $(\Q_1, \iota_1, s_1) \in \Qnt_{A,\Gamma}(B_1)$ to $(\Q_2, \iota_2, s_2) \in \Qnt_{A, \Gamma}(B_2)$ where $(\Q_2,\iota_2) = \Qnt_A(\beta)(\Q_1, \iota_1)$ and for $g \in \Gamma$ we set $s_2(g) = s_1(g) \otimes \id$.
\end{enumerate}
\end{Definition}

The notion of an optimal quantization theory (OQT) from Definition~\ref{D:oc} can be upgraded to the equivariant setting.
\begin{Definition} \label{def_gammaQfunctor}
We say that $A$ admits an \emph{optimal $\Gamma$-quantization theory} (O$\Gamma$QT) if the following conditions are satisfied:
\begin{enumerate}[label=(\roman*)]
\item The functor $\Qnt_{A, \Gamma}$ is representable over some $B_{u, \Gamma} \in \SFAlg$;
\item The functor $\PD_{A, \Gamma}$ is representable over $\gr B_{u,\Gamma} \in \GrAlg$;
\item There exist a representation $\eta$ of $\Qnt_{A,\Gamma}$ over $B_{u,\Gamma}$, resp. $\zeta$ of $\PD_{A,\Gamma}$ over $\gr B_{u,\Gamma}$, such that the following diagram of natural transformations commutes:
\begin{center}
\begin{tikzcd}
\Hom_{\SFAlg}(B_{u,\Gamma}, -)   \arrow[d,,"\graded"']  \arrow[r, "\eta"] & \Qnt_{A, \Gamma}(-) \arrow[d,,"\graded"] \\
  \Hom_{\GrAlg}(\gr (B_{u,\Gamma}), \gr -) \arrow[r, "\zeta"] &  \PD_{A,\Gamma} \gr(-)
\end{tikzcd}
\end{center}
\end{enumerate}
\end{Definition}

We define the forgetful natural transformation
$\Forg \colon \Qnt_{A, \Gamma} \to \Qnt_A$ as follows: for $B \in \SFAlg$, the map $\Forg_B \colon \Qnt_{A, \Gamma}(B) \to \Qnt_A(B)$ is given by $(\Q, \iota, s) \mapsto (\Q, \iota)$.
We have a quantum analogue of Theorem 	\ref{T:dforgetful}.
\begin{Theorem} \label{T:qforgetful}
The natural transformation $\Forg \colon \Qnt_{A,\Gamma} \to \Qnt_A$ factors through $\Qnt_A^\Gamma$
and $\Forg$ defines a natural isomorphism between the functors $\Qnt_{A, \Gamma}$ and $\Qnt_A^\Gamma$.
\end{Theorem}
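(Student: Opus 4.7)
The plan is to mirror the proof of Theorem~\ref{T:dforgetful} using the quantum analogues developed in Section~\ref{ss_autqnt}, with Lemma~\ref{L:semidirect_q} playing the role of Lemma~\ref{L:semidirect}. Fix $B \in \SFAlg$. First I would show that $\Forg_B(\Qnt_{A,\Gamma}(B)) \subseteq \Qnt_A^\Gamma(B)$: given a $\Gamma$-structure $s\colon \Gamma \to \widetilde{\Gamma}_q(\Q,\iota)$ on $(\Q,\iota)$, each $\gr s(g)$ lies in $\widetilde{\Gamma}(\gr \Q, \iota)$ and satisfies $p(\gr s(g)) = g$. Unwinding the definition \eqref{eq:gammatilde} of $\widetilde{\Gamma}(\gr \Q, \iota)$ yields $\iota \circ (\gr s(g) \otimes \id) = g \circ \iota$. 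Since $s(g)$ is a $B$-linear filtered automorphism of $\Q$, this exhibits an isomorphism between $(\Q,\iota)$ and $^{g}(\Q,\iota) = (\Q, g\circ\iota)$ as filtered quantizations, proving $\Gamma$-stability.

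Next I would establish surjectivity of $\Forg_B \colon \Qnt_{A,\Gamma}(B) \to \Qnt_A^\Gamma(B)$. Take $(\Q,\iota) \in \Qnt_A^\Gamma(B)$. By Lemma~\ref{L:semidirect_q}, the algebraic group $\widetilde{\Gamma}_q := \widetilde{\Gamma}_q(\Q,\iota)$ admits a Levi decomposition $\widetilde{\Gamma}_q \simeq \Gamma \ltimes U_q$ with $U_q$ unipotent, and the composite $p \circ \gr \colon \widetilde{\Gamma}_q \to \Gamma$ is a surjective morphism of algebraic groups. Any algebraic section $s \colon \Gamma \to \widetilde{\Gamma}_q$ of $p\circ \gr$ — which exists by Mostow's theorem applied to the short exact sequence $1 \to U_q \to \widetilde{\Gamma}_q \to \Gamma \to 1$ — furnishes a $\Gamma$-structure on $(\Q,\iota)$, so $(\Q,\iota,s) \in \Qnt_{A,\Gamma}(B)$ lifts the given element.

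For injectivity, suppose $(\Q_1, \iota_1, s_1)$ and $(\Q_2, \iota_2, s_2)$ have isomorphic underlying filtered quantizations $(\Q_i, \iota_i)$. Transferring $s_2$ along such an isomorphism, we may assume $(\Q_1, \iota_1) = (\Q_2, \iota_2) =: (\Q,\iota)$, so that $s_1, s_2$ are two sections of $p \circ \gr \colon \widetilde{\Gamma}_q \to \Gamma$. Since $U_q$ is unipotent and $\Gamma$ is reductive, any two Levi complements of $U_q$ in $\widetilde{\Gamma}_q$ are conjugate by an element of $U_q$ (Mostow's theorem again), which means there exists $\phi \in U_q$ with $\phi \circ s_1(g) \circ \phi^{-1} = s_2(g)$ for all $g \in \Gamma$. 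This $\phi$ is, by construction, a $B$-linear filtered automorphism of $\Q$ whose associated graded lies in $\ker p \subseteq \PAut_{\gr B\text{-}\GrAlg}(\gr \Q)$, and hence induces the identity on the central fibre, so it realises an isomorphism $(\Q, \iota, s_1) \isoto (\Q, \iota, s_2)$ of filtered $\Gamma$-quantizations.

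Since the arguments in each step are natural in $B$, they assemble into a natural isomorphism $\Forg \colon \Qnt_{A,\Gamma} \isoto \Qnt_A^\Gamma$. The main technical point is the same one that made Theorem~\ref{T:dforgetful} work — the conjugacy of Levi complements in an algebraic semidirect product with unipotent radical — but here it must be applied to $\widetilde{\Gamma}_q$, whose algebraic structure is supplied by Remark~\ref{R:algebraic} (in particular the identification of $\gr$ with an algebraic morphism on block-triangular matrices). With this in hand the rest is a routine diagram chase.
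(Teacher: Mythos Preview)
Your proposal is correct and follows precisely the approach the paper intends: its own proof consists of the single sentence ``This is a quantum version of Theorem~\ref{T:dforgetful}: the proof is analogous and uses Lemma~\ref{L:semidirect_q},'' and you have faithfully carried out that analogy, replacing $\widetilde{\Gamma}$ and Lemma~\ref{L:semidirect} by $\widetilde{\Gamma}_q$ and Lemma~\ref{L:semidirect_q}, and invoking Mostow's theorem for existence and conjugacy of sections exactly as in the Poisson case.
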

\begin{proof}
This is a quantum version of Theorem \ref{T:dforgetful}: the proof is analogous and uses Lemma \ref{L:semidirect_q}.
\end{proof}

As a consequence,  we identify the functors $\Qnt_{A,\Gamma}$ and $\Qnt_A^\Gamma$; in particular, for $B \in \SFAlg$, we omit the third entry (i.e. the $\Gamma$-structure $s$) when writing an element of $\Qnt_{A,\Gamma}(B)$.

The following is a quantum version of Proposition \ref{P:dforgetful}, and its proof is similar. Denote by $\alpha_\Gamma \colon B_u \to (B_u)_\Gamma$ the quotient map onto the coinvariant algebra with respect to the right $\Gamma$-action defined in \eqref{e:universalQaction}.


\begin{Proposition}\label{P:qforgetful}
If $\Qnt_A$ is representable over $B_u$, 
then  $\Qnt_A^{\Gamma}$ is representable over $(B_u)_{\Gamma}$ and  $\Qnt_A(\alpha_\Gamma)(\Q_u, \iota_u)$ is a universal element of $\Qnt_A^\Gamma$. \hfill \qed
\end{Proposition}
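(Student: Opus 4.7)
The plan is to mimic verbatim the argument of Proposition~\ref{P:dforgetful}, replacing the deformation functor by the quantization functor and invoking the quantum analogue of Remark~\ref{R:univisequiv} to obtain the right $\Gamma$-action on the universal base $B_u$ via the assignment $g \mapsto \alpha_g$ from \eqref{e:universalQaction}. Theorem~\ref{T:qforgetful} allows us to identify $\Qnt_{A,\Gamma}$ with $\Qnt_A^{\Gamma}$, so proving the statement for $\Qnt_A^{\Gamma}$ suffices.

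Concretely, first I would start with an arbitrary $B \in \SFAlg$ and an arbitrary $\Gamma$-stable filtered quantization $(\Q, \iota) \in \Qnt_A^{\Gamma}(B)$. By representability of $\Qnt_A$ over $B_u$, there is a unique $\beta \in \Hom_{\SFAlg}(B_u, B)$ such that $\Qnt_A(\beta)(\Q_u, \iota_u) = (\Q, \iota)$. The core of the proof will then consist of showing that $\beta$ equalises all the automorphisms $\alpha_g$ of $B_u$, i.e. that $\beta \circ \alpha_g = \beta$ for every $g \in \Gamma$. This follows from the chain of equalities
\begin{equation*}
\Qnt_A(\beta \circ \alpha_g)(\Q_u, \iota_u) = \Qnt_A(\beta)(\Q_u, g \circ \iota_u) = (\Q, g \circ \iota) = (\Q, \iota) = \Qnt_A(\beta)(\Q_u, \iota_u),
\end{equation*}
where the first equality uses the definition of $\alpha_g$ (the quantum version of Remark~\ref{R:univisequiv}), the second uses functoriality, and the third uses $\Gamma$-stability of $(\Q, \iota)$. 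The uniqueness clause in representability then forces $\beta \circ \alpha_g = \beta$.

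Consequently $\beta$ vanishes on the homogeneous ideal generated by $\{b - b \cdot g \mid b \in B_u, \, g \in \Gamma\}$ and, by the universal property of the coinvariant algebra, factors uniquely as $\beta = \bar\beta \circ \alpha_\Gamma$ for some $\bar\beta \in \Hom_{\SFAlg}((B_u)_\Gamma, B)$. Applying $\Qnt_A$ yields
\begin{equation*}
(\Q, \iota) = \Qnt_A(\bar\beta)\bigl(\Qnt_A(\alpha_\Gamma)(\Q_u, \iota_u)\bigr),
\end{equation*}
which exhibits $\Qnt_A(\alpha_\Gamma)(\Q_u, \iota_u)$ as a universal element of $\Qnt_A^{\Gamma}$ over $(B_u)_\Gamma$, with $\bar\beta$ unique by construction.

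The only genuinely non-routine point is confirming that the right $\Gamma$-action on $B_u$ from \eqref{e:universalQaction} behaves under $\Qnt_A$ as it does under $\PD_A$ in Remark~\ref{R:univisequiv}; but this is precisely what the quantum analogue of that Remark provides, and the splitting properties recorded in Lemma~\ref{L:semidirect_q} ensure that the unipotent part of $\widetilde\Gamma_q$ does not interfere with the factorization argument. Since everything else is a direct transcription of the proof of Proposition~\ref{P:dforgetful}, I expect no further obstacles.
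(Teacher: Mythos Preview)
Your proposal is correct and matches the paper's approach exactly: the paper simply states that the proof is similar to that of Proposition~\ref{P:dforgetful} and gives no further details, so your verbatim transcription of that argument with $\Qnt_A$ in place of $\PD_A$ and the $\Gamma$-action from \eqref{e:universalQaction} is precisely what is intended. The only superfluous remark is the appeal to Lemma~\ref{L:semidirect_q} at the end---the factorization through the coinvariant algebra follows purely from $\beta \circ \alpha_g = \beta$ and the universal property of the quotient, with no need to invoke the Levi decomposition of $\widetilde\Gamma_q$.
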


Recall from Section~\ref{ss:interplay} that there is a natural transformation $\graded : \Qnt_A \to \PD_A \circ\gr$. 

\begin{Lemma} \label{L:gammainterplay}
The natural transformation $\graded$ induces a natural transformation of functors $\Qnt_{A,\Gamma} \to \PD_{A,\Gamma} \circ \gr$.

Moreover, if $A$ admits an OQT the following conditions are verified:
\begin{enumerate}[label=(\arabic*)]
\item Let $B \in \SFAlg$ and $(\Q, \iota) \in \Qnt_{A,\Gamma}(B)$ and suppose $(\gr \Q, \iota) \in  \PD_{A,\Gamma}(\gr B)$ is a universal element of $\PD_{A, \Gamma}$.
Then $(\Q, \iota)$ is a universal element of $\Qnt_{A,\Gamma}$.
\item $A$ admits an O$\Gamma$\!QT with $B_{u,\Gamma}=(B_u)_{\Gamma}$, and the restrictions of the representations from the OQT assumption.
\end{enumerate}
\end{Lemma}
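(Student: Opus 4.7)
The strategy is to reduce everything to the non-equivariant statements by means of the identifications $\Qnt_{A,\Gamma} \simeq \Qnt_A^\Gamma$ and $\PD_{A,\Gamma} \simeq \PD_A^\Gamma$ of Theorems~\ref{T:dforgetful} and \ref{T:qforgetful}, combined with Propositions~\ref{P:dforgetful} and \ref{P:qforgetful}. For the first (unconditional) assertion it suffices to check that $\graded$ sends $\Qnt_A^\Gamma(B)$ into $\PD_A^\Gamma(\gr B)$: given $(\Q,\iota) \in \Qnt_A^\Gamma(B)$ and $g \in \Gamma$, a witnessing $B$-linear filtered isomorphism ${}^g(\Q,\iota) \isoto (\Q,\iota)$ in $\Qnt_A(B)$ produces, upon applying the exact functor $\gr$ of Lemma~\ref{L:grisafunctor}, a witnessing $\gr B$-linear graded Poisson isomorphism ${}^g(\gr \Q,\iota) \isoto (\gr \Q,\iota)$ in $\PD_A(\gr B)$. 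Naturality is then inherited from the unrestricted $\graded$.

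For part (1) I would mimic the proof of Lemma~\ref{L:interplay} verbatim in the equivariant setting. By Proposition~\ref{P:qforgetful} applied to the OQT universal base, $\Qnt_{A,\Gamma}$ is representable over $(B_u)_\Gamma$ with a fixed universal element $(\Q_{u,\Gamma},\iota_{u,\Gamma})$. For $(\Q,\iota) \in \Qnt_{A,\Gamma}(B)$ with universal associated graded, let $\varphi \in \Hom_{\SFAlg}((B_u)_\Gamma,B)$ be the unique morphism pulling back $(\Q_{u,\Gamma},\iota_{u,\Gamma})$ to $(\Q,\iota)$. The commuting OQT square (axiom (iii)) shows that $\gr \varphi$ pulls the universal graded Poisson $\Gamma$-deformation back to $(\gr \Q,\iota)$; the universality hypothesis on the latter, together with uniqueness in the universal property of the $\PD_{A,\Gamma}$-base, forces $\gr \varphi$ to be an isomorphism. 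Since $\varphi$ is strictly filtered between split-filtered algebras, this lifts to $\varphi$ being an isomorphism, so $(\Q,\iota)$ is universal.

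For part (2) I would verify the three O$\Gamma$QT axioms with $B_{u,\Gamma} := (B_u)_\Gamma$. Axiom (i) is Proposition~\ref{P:qforgetful}. Axiom (ii) reduces to identifying $\gr((B_u)_\Gamma)$ with $(\gr B_u)_\Gamma$ (the latter formed with respect to the $\Gamma$-action on $\gr B_u$ from Remark~\ref{R:univisequiv} applied to $\PD_A$), after which Proposition~\ref{P:dforgetful} supplies the representability. This identification has two ingredients: first, the $\Qnt_A$-induced action on $B_u$ is by strictly filtered automorphisms, so the coinvariant ideal $I$ is filtered, $\alpha_\Gamma$ is strict, and $\gr(B_u/I) = \gr(B_u)/\gr(I)$ with $\gr(I)$ generated by the top-degree parts $\bar b - \gr(\gamma)\bar b$; second, for each $g\in\Gamma$ the automorphism $\gr \alpha_g \in \Aut_{\GrAlg}(\gr B_u)$ coincides with the automorphism $\beta_g$ coming from $\PD_A$, which follows by applying the OQT compatibility square to the defining equality $\Qnt_A(\alpha_g)(\Q_u,\iota_u) = (\Q_u, g\circ \iota_u)$ and invoking uniqueness in the universal property of $\gr B_u$. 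Axiom (iii) is then inherited directly from the OQT square by post-composing both rows with $\alpha_\Gamma$ and $\gr \alpha_\Gamma$ respectively. The main obstacle I foresee is precisely this matching of the two a priori distinct $\Gamma$-actions — the one on $B_u$ coming from $\Qnt_A$ and the one on $\gr B_u$ coming independently from $\PD_A$ — for which the commutativity of the OQT diagram is the essential input.
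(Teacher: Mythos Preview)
Your proposal is correct and follows essentially the same approach as the paper: both reduce to the non-equivariant situation via Theorems~\ref{T:dforgetful}, \ref{T:qforgetful} and Propositions~\ref{P:dforgetful}, \ref{P:qforgetful}, argue part~(1) by mimicking Lemma~\ref{L:interplay}, and deduce part~(2) by combining those four results with the OQT compatibility square. Your write-up is considerably more detailed than the paper's (which is only a few lines), and you correctly flag the one genuine point requiring care---that the $\Gamma$-action on $B_u$ coming from $\Qnt_A$ and the one on $\gr B_u$ coming from $\PD_A$ agree after passing to $\gr$---which the paper leaves implicit.
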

\begin{proof}The first assertion follows from the definitions.
For the remaining part of the Lemma,  (1) can be proven by mimicking the argument for Lemma \ref{L:interplay}. For part (3) we simply combine Theorem~\ref{T:dforgetful}, Proposition~\ref{P:dforgetful}, Theorem~\ref{T:qforgetful}, Proposition~\ref{P:qforgetful}. Commutativity of the diagram of Definition \ref{def_gammaQfunctor} (iii)  is a consequence of Propositions \ref{P:dforgetful} and \ref{P:qforgetful} under the OQT conditions for $A$.
\end{proof}

\begin{Remark}\label{R:ocgamma}
Assume $A$ admits an OQT and retain notation from Remark \ref{R:oc}. In particular the universal bases are $B_u$ and $C_u$, and $(\A_u, \iota_\A)$ and $(\Q_u, \iota_\Q)$ are universal elements of $\PD_A$ and $\Qnt_A$ (respectively) satisfying the compatibility condition (iii) of an OQT. In Remark~\ref{R:univisequiv} and \eqref{e:universalQaction} we noted that there are right $\Gamma$-actions on $B_u$ and $C_u$.
Let $\beta_\Gamma \colon B_u \to (B_u)_\Gamma$ and $\alpha_\Gamma \colon C_u \to (C_u)_\Gamma$ denote the quotient maps to the coinvariant algebras, then $\Qnt_A(\beta_\Gamma)(\Q_u, \iota_\Q)$, resp. $\PD_A(\alpha_\Gamma)(\A, \iota_\A)$ is a universal element of $\Qnt_{A,\Gamma}$, resp. of $\PD_{A,\Gamma}$.
Then (1) of Lemma \ref{L:gammainterplay} is equivalent to saying that there exists a unique isomorphism $\phi_\Gamma \in \Hom_{\GrAlg}((C_u)_\Gamma, (\gr B_u)_\Gamma)$ such that $\PD_{A, \Gamma}(\phi_\Gamma)$ maps $\PD_A(\alpha_\Gamma)(\A, \iota_\A)$ to $\gr \Qnt_A(\beta_\Gamma)(\Q, \iota_\Q)$. In other words, $\phi_\Gamma$ makes the following diagram commute:
$$
\begin{tikzcd}
C_u \arrow[rr, "\phi"] \arrow[d, "\alpha_\Gamma"]  && \gr B_u \arrow[d, "\gr \beta_\Gamma"] \\
(C_u)_\Gamma \arrow[r, "\phi_\Gamma"] & (\gr B_u)_\Gamma \arrow[r,"\sim"] &\gr((B_u)_\Gamma)
\end{tikzcd}$$
\end{Remark}

\begin{Lemma}\label{L:interplay2}
Assume $A$ admits an OQT and retain notation from Remark \ref{R:oc}.
Let $(\Q, \iota) \in \Qnt_A(B_u)$ satisfy $(\gr \Q_u, \iota_\Q) = (\gr \Q, \iota)$.
Then there exists a unique unipotent automorphism $\nu \in \Aut_{\SFAlg}(B_u)$ such that $\Qnt_A(\nu)(\Q_u, \iota_\Q) = (\Q, \iota)$.
\end{Lemma}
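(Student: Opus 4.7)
The plan is to obtain $\nu$ directly from the universal property of $(\Q_u, \iota_\Q)$ and then use the OQT compatibility diagram to force $\gr \nu = \id_{\gr B_u}$, from which unipotency follows automatically.

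First I would apply representability of $\Qnt_A$ over $B_u$ to the given element $(\Q, \iota) \in \Qnt_A(B_u)$: this yields a unique morphism $\nu \in \Hom_{\SFAlg}(B_u, B_u)$ such that $\Qnt_A(\nu)(\Q_u, \iota_\Q) = (\Q, \iota)$. This takes care of the uniqueness part of the statement and reduces the lemma to showing that this $\nu$ is a unipotent automorphism.

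Next I would compute $\gr \nu$. By OQT axiom (iii), the natural transformation $\graded$ intertwines the representations, so
\[
\PD_A(\gr \nu)(\gr \Q_u, \iota_\Q) \;=\; \graded\bigl(\Qnt_A(\nu)(\Q_u, \iota_\Q)\bigr) \;=\; (\gr \Q, \iota) \;=\; (\gr \Q_u, \iota_\Q),
\]
where the last equality is the hypothesis of the lemma. Since $\PD_A$ is represented by $\gr B_u$ with universal element $(\gr \Q_u, \iota_\Q)$ (by Remark~\ref{R:oc} and OQT axiom (ii)), uniqueness of the classifying map forces $\gr \nu = \id_{\gr B_u}$.

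It remains to conclude that $\nu$ is a unipotent automorphism of $B_u$. Because $B_u$ is finitely generated and $B_u \in \SFAlg$, each filtered piece $(B_u)_n$ is finite-dimensional, and the identification with $\gr B_u$ coming from the splitting lets us regard $\nu$ as a linear endomorphism of $(B_u)_n$ whose associated graded is the identity. Hence $\nu - \id_{B_u}$ strictly decreases filtration, so on each finite-dimensional piece $(B_u)_n$ the operator $\nu - \id$ is nilpotent; consequently $\nu$ restricts to a unipotent linear automorphism of every $(B_u)_n$, and in particular $\nu$ is an automorphism in $\Aut_{\SFAlg}(B_u)$ lying in the unipotent subgroup of filtered automorphisms inducing the identity on $\gr B_u$. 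The only routine point worth double-checking is the compatibility of the coinvariant quotient with the OQT setup, but that plays no role here; the real content of the argument is the pull-back of uniqueness through the OQT diagram, which is painless.
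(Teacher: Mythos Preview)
Your proof is correct and follows the same overall strategy as the paper: use representability to produce a unique $\nu$, then use the OQT compatibility to force $\gr\nu=\id_{\gr B_u}$, and finally deduce unipotency. The paper organises the last steps slightly differently: it first invokes Lemma~\ref{L:interplay} to conclude that $(\Q,\iota)$ is itself universal and hence that $\nu$ is an automorphism, then obtains $\gr\nu=\id$ via the isomorphism $\phi$ of Remark~\ref{R:oc}, and finally appeals to the algebraic-group framework of Remark~\ref{R:algebraic} (the map $\gr\colon\Aut_{\SFAlg}(B_u)\to\Aut_{\GrAlg}(\gr B_u)$ is a morphism of algebraic groups with unipotent kernel). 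Your route is a bit more elementary: you get $\gr\nu=\id$ directly from the OQT diagram and then extract both invertibility and unipotency from the bare linear-algebra fact that a filtered endomorphism inducing the identity on the associated graded is unipotent on each finite-dimensional filtered piece. Either approach works; yours avoids quoting Lemma~\ref{L:interplay} and the algebraic-group machinery, while the paper's phrasing makes the structural reason (kernel of $\gr$ is unipotent) more visible. Your closing remark about the coinvariant quotient is extraneous here and can be dropped.
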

\begin{proof}
Under the assumptions, there exists a unique $\nu \in \Hom_{\SFAlg}(B_u, B_u)$ such that $\Qnt_A(\nu)(\Q_u, \iota_\Q) = (\Q, \iota)$.
By Lemma \ref{L:interplay}, $(\Q, \iota)$ is universal, so $\nu \in \Aut_{\SFAlg}(B_u)$.
Moreover, it satisfies the relation $(\gr \nu) \circ \phi = \phi \in \Hom_{\GrAlg}(C_u, \gr B_u)$.
Since $\phi$ is an isomorphism, this implies $\gr \nu = \id_{\gr B_u}$.
To prove that $\nu$ is unipotent, we observe that $\Aut_{\SFAlg}(B_u)$ and $\Aut_{\GrAlg}(\gr B_u)$ are algebraic groups and that $\gr \colon \Aut_{\SFAlg}(B_u) \to \Aut_{\GrAlg}(\gr B_u)$ is a morphism of algebraic groups: one can use the same argument in Remark \ref{R:algebraic}.
\end{proof}

Along the same lines one can prove the following $\Gamma$-equivariant version of the result.
\begin{Lemma}\label{L:gammainterplay2}
Assume $A$ admits an OQT and retain notation from Remark \ref{R:ocgamma}. 
Let $(\Q, \iota) \in \Qnt_{A,\Gamma} ((B_u)_\Gamma)$ satisfy $\graded_{(B_u)_\Gamma} \Qnt_A(\beta_\Gamma)(\Q, \iota_\Q) = (\gr \Q, \iota)$.
Then there exists a unique unipotent automorphism $\nu \in \Aut_{\SFAlg}((B_u)_\Gamma)$ such that $\Qnt_{A,\Gamma}(\nu)(\Q_u, \iota_\Q) = (\Q, \iota)$. \hfill \qed
\end{Lemma}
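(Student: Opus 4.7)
The plan is to mimic the proof of Lemma~\ref{L:interplay2} in the $\Gamma$-equivariant setting, using the O$\Gamma$QT established in part~(2) of Lemma~\ref{L:gammainterplay}. First I would invoke Proposition~\ref{P:qforgetful} and Theorem~\ref{T:qforgetful} to conclude that $\Qnt_{A,\Gamma}$ is representable over $(B_u)_\Gamma$ with universal element $\Qnt_A(\beta_\Gamma)(\Q_u, \iota_\Q)$. By the representability of $\Qnt_{A,\Gamma}$, there exists a unique $\nu \in \Hom_{\SFAlg}((B_u)_\Gamma, (B_u)_\Gamma)$ such that $\Qnt_{A,\Gamma}(\nu)$ sends this universal element to $(\Q, \iota)$; this takes care of existence and uniqueness of $\nu$ in the statement, and it only remains to check that $\nu$ is an automorphism and is unipotent.

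To show that $\nu$ is an isomorphism, I would appeal to Lemma~\ref{L:gammainterplay}(1). The hypothesis on associated gradeds identifies $(\gr\Q,\iota)$ with the universal element of $\PD_{A,\Gamma}$ over $\gr((B_u)_\Gamma)$, using the identification $(\gr B_u)_\Gamma \simeq \gr((B_u)_\Gamma)$ from Remark~\ref{R:ocgamma}. Lemma~\ref{L:gammainterplay}(1) then forces $(\Q, \iota)$ itself to be a universal element of $\Qnt_{A,\Gamma}$, and uniqueness in the universal property of $\Qnt_A(\beta_\Gamma)(\Q_u, \iota_\Q)$ forces $\nu$ to be invertible.

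To prove unipotence, I would pass to the associated graded. The OQT compatibility from Definition~\ref{D:oc}(iii), transported to the $\Gamma$-equivariant side by Lemma~\ref{L:gammainterplay}(2), guarantees that $\gr\nu$ is the unique graded Poisson $\Gamma$-automorphism of $\gr((B_u)_\Gamma)$ sending $\gr\Qnt_A(\beta_\Gamma)(\Q_u, \iota_\Q)$ to $(\gr \Q, \iota)$ in $\PD_{A,\Gamma}$. Since our hypothesis asserts that these two associated gradeds already agree, uniqueness in the universal property of $\PD_{A,\Gamma}$ over $\gr((B_u)_\Gamma)$ gives $\gr\nu = \id$. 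Mimicking the splitting argument of Remark~\ref{R:algebraic} (applied to a single filtered algebra rather than to automorphisms of an algebra over a base), the map $\gr \colon \Aut_{\SFAlg}((B_u)_\Gamma) \to \Aut_{\GrAlg}(\gr((B_u)_\Gamma))$ is a morphism of algebraic groups whose kernel is unipotent, and $\nu$ lies in this kernel.

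The main obstacle I anticipate is purely bookkeeping: keeping track of the identifications $(\gr B_u)_\Gamma \simeq \gr((B_u)_\Gamma)$ and the compatibility of the two right $\Gamma$-actions described in Remark~\ref{R:ocgamma}, so that the universal element of $\PD_{A,\Gamma}$ we compare against is genuinely $\PD_A(\alpha_\Gamma)(\A_u, \iota_\A)$ transported through $\phi_\Gamma$. Once this identification is made explicit, the substantive content is the same as in Lemma~\ref{L:interplay2}, with the forgetful natural isomorphisms of Theorems~\ref{T:dforgetful} and \ref{T:qforgetful} allowing us to import the non-equivariant arguments without modification.
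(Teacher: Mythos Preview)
Your proposal is correct and follows exactly the approach the paper intends: the paper states the lemma with a \qed and the preceding sentence ``Along the same lines one can prove the following $\Gamma$-equivariant version of the result,'' so the expected proof is precisely the transcription of the argument of Lemma~\ref{L:interplay2} into the $\Gamma$-equivariant setting via the O$\Gamma$QT of Lemma~\ref{L:gammainterplay}(2), which is what you do. One tiny slip: in your third paragraph you call $\gr\nu$ a ``graded Poisson $\Gamma$-automorphism'' of $\gr((B_u)_\Gamma)$, but $(B_u)_\Gamma$ is just a commutative (split-)filtered algebra, so $\gr\nu$ is simply a graded algebra automorphism; this does not affect the argument.
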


\subsection{Conical symplectic singularities and their deformations}
\label{ss:conicalsingulairities}
In this section we apply all of the above results to an important class of conical Poisson varieties, known as symplectic singularities. We say that an affine Poisson variety $X$ is {\it conical} if $\C[X]$ is a graded Poisson algebra in degree $-n$ for some $n\in \Nbb$. Now let $X$ be a normal conical  Poisson variety, with Poisson bivector $\varpi$, and suppose that the restriction $\varpi|_{X^\reg}$ to the regular locus is non-degenerate, i.e. $(X^\reg, \varpi)$ is a symplectic variety. Following \cite{Be} we say that $X$ {\it is a symplectic singularity} if there is a projective resolution of singularities $\rho : \tX \to X$ such that the symplectic form $\rho^*\pi$ on $\rho^{-1}(X^\reg)$ extends to a regular $2$-form on $\tX$. 
This property does not depend on which resolution you choose \cite[\textsection 2.1]{Ka}.

The following theorem combines results of Losev and Namikawa; see \cite{Lo, Na1, Na2}.
\begin{Theorem}
\label{T:conicsymphaveinitials}
Let $X$ be a conical symplectic singularity.  
Then $A = \C[X]$ admits an OQT,  so Lemmas~\ref{L:interplay} and  \ref{L:interplay2}  apply. 
\end{Theorem}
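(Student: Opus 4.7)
The plan is to verify directly the three axioms of an OQT (Definition~\ref{D:oc}) for $A = \C[X]$, drawing on the established results of Namikawa and Losev; once those axioms hold, Lemmas~\ref{L:interplay} and \ref{L:interplay2} apply by definition without further work.

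First, axiom (ii) is supplied by Namikawa \cite{Na1, Na2}: for every conic symplectic singularity, the functor $\PD_A$ is representable, so there exists a universal graded Poisson deformation $(\A_u, \iota_\A)$ over some $C_u \in \GrAlg$. Second, axiom (i) is supplied by Losev \cite{Lo}: the functor $\Qnt_A$ is representable, and the universal filtered quantization $(\Q_u, \iota_\Q)$ lives over a base $B_u \in \SFAlg$. Crucially, Losev constructs this universal quantization by quantizing Namikawa's universal Poisson deformation; consequently $\gr B_u$ is canonically identified with $C_u$ as a graded algebra, and $(\gr \Q_u, \iota_\Q)$ is identified with $(\A_u, \iota_\A)$ as graded Poisson deformations of $A$. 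This identification is precisely the compatibility that we need in order to talk about a single OQT rather than two unrelated representability statements.

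For axiom (iii) I would fix the representation $\eta$ of $\Qnt_A$ corresponding, via Yoneda's lemma (\S\ref{ss:cattheory}), to the universal element $(\Q_u, \iota_\Q)$, and the representation $\zeta$ of $\PD_A$ corresponding to the universal element $(\A_u, \iota_\A) \in \PD_A(\gr B_u)$ (using the identification $C_u \simeq \gr B_u$ from the previous step). Under these choices, the left vertical $\graded$ of the diagram in Definition~\ref{D:oc}(iii) sends $\id_{B_u} \mapsto \id_{\gr B_u}$, while the right vertical sends $(\Q_u, \iota_\Q) \mapsto (\gr \Q_u, \iota_\Q) = (\A_u, \iota_\A)$, so the diagram commutes at the universal object. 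Commutativity for an arbitrary $B \in \SFAlg$ then follows by naturality: any $\beta \in \Hom_{\SFAlg}(B_u, B)$ produces the quantization $\Qnt_A(\beta)(\Q_u,\iota_\Q)$, whose associated graded is $\PD_A(\gr \beta)(\A_u,\iota_\A)$ exactly because $\gr$ is a tensor-compatible exact functor on strictly filtered modules (Lemmas~\ref{L:flatequalsfree} and \ref{L:grandtensors}).

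The main obstacle is one of bookkeeping rather than substance: it requires carefully matching the filtration and grading conventions in Losev's paper with the functorial framework adopted in \S\S\ref{ss:poissondefsection}--\ref{ss:quantizationdsection}, in particular verifying that Losev's universal base genuinely lies in $\SFAlg$ (rather than merely $\FAlg$), and that his associated graded identifies canonically, and not just abstractly, with Namikawa's graded base in a way that matches the central fibre isomorphism $\iota$. Once this translation is performed, the three OQT axioms follow at once, and the applicability of Lemmas~\ref{L:interplay} and \ref{L:interplay2} is automatic.
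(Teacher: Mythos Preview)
Your proposal is correct and matches the paper's approach: the paper simply states that the theorem combines results of Losev and Namikawa \cite{Lo, Na1, Na2} without supplying a written proof, so your verification of the three OQT axioms from those references is exactly the intended argument spelled out in more detail. The only difference is that the paper leaves the bookkeeping (matching Losev's conventions to the functorial framework of \S\S\ref{ss:poissondefsection}--\ref{ss:quantizationdsection}) implicit, whereas you flag it explicitly.
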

\begin{Remark}
The universal base of the functor $\PD_A$ and $\Qnt_A$ is the coordinate ring on a graded vector space defined to be the cohomology $H^2(\tilde X^\reg, \C)$ where $\tilde X^\reg$ is the smooth locus of a $\mathbb{Q}$-factorial terminalization of the conical symplectic singularity $X$. Since these technical details lie beyond the requirements of the current paper we refer the reader to \cite{Na1, Lo} for information.
\end{Remark}

Now let $X$ be a conical symplectic singularity with $\Gamma$ a reductive group of $\C^\times$-equivariant  Poisson automorphisms.
Then Propositions~\ref{P:dforgetful}, \ref{P:qforgetful} and Lemmas \ref{L:gammainterplay}, \ref{L:gammainterplay2}  apply. This completes the proof of Theorem~\ref{T:firstmain}.\\

For the sake of completeness we make explicit the excellent properties of $\Gamma$-quantization theory for $X$.
\begin{Theorem} Let $X$ be a conical symplectic singularity.
If $\Gamma$ is any reductive group of $\C^\times$-equivariant Poisson automorphisms of $X$ then $\C[X]$ admits an O$\Gamma$\!QT.
\end{Theorem}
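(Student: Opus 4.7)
The plan is to reduce the theorem to a direct application of results already established in the excerpt, principally Theorem~\ref{T:conicsymphaveinitials} and Lemma~\ref{L:gammainterplay}(2). First I would invoke Theorem~\ref{T:conicsymphaveinitials} to deduce that $A = \C[X]$ admits an OQT: write $B_u \in \SFAlg$ for the universal base of $\Qnt_A$ and $C_u \in \GrAlg$ for the universal base of $\PD_A$, together with universal elements $(\Q_u,\iota_\Q)$ and $(\A_u,\iota_\A)$ and the unique graded isomorphism $\phi \in \Hom_{\GrAlg}(C_u, \gr B_u)$ of Remark~\ref{R:oc}. Since $\Gamma$ acts on $X$ by $\C^\times$-equivariant Poisson automorphisms, it embeds in the reductive part $\G$ of $\PAut_{\GrAlg}(A)$, so the machinery of \S\ref{ss_autdef} and \S\ref{ss_autqnt} applies verbatim.

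Next I would identify the candidate universal base in the equivariant setting. By Remark~\ref{R:univisequiv} and \eqref{e:universalQaction}, $\Gamma$ acts on the right on both $C_u$ and $B_u$, and I would set $B_{u,\Gamma} := (B_u)_\Gamma$, with quotient maps $\alpha_\Gamma \colon C_u \to (C_u)_\Gamma$ and $\beta_\Gamma \colon B_u \to (B_u)_\Gamma$ onto the coinvariant algebras. Then Proposition~\ref{P:dforgetful} (combined with Theorem~\ref{T:dforgetful}) yields representability of $\PD_{A,\Gamma} \cong \PD_A^\Gamma$ over $(C_u)_\Gamma$, while Proposition~\ref{P:qforgetful} (combined with Theorem~\ref{T:qforgetful}) yields representability of $\Qnt_{A,\Gamma} \cong \Qnt_A^\Gamma$ over $(B_u)_\Gamma$. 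This takes care of axioms (i) and (ii) in the definition of an O$\Gamma$QT, once one observes via Remark~\ref{R:ocgamma} that $\phi$ descends to an isomorphism $\phi_\Gamma \in \Hom_{\GrAlg}((C_u)_\Gamma, \gr((B_u)_\Gamma))$, so $\gr B_{u,\Gamma}$ may be identified with the universal base of $\PD_{A,\Gamma}$.

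The remaining point, axiom (iii) of the O$\Gamma$QT, is precisely the content of Lemma~\ref{L:gammainterplay}(2), which asserts that the restrictions of the representations from the OQT assumption assemble into a commutative square of natural transformations between the graded and quantized sides. Concretely, one takes the representations $\eta$ of $\Qnt_A$ over $B_u$ and $\zeta$ of $\PD_A$ over $C_u$ guaranteed by the OQT, pushes them through the natural transformations of Theorems~\ref{T:qforgetful} and \ref{T:dforgetful}, and uses Propositions~\ref{P:qforgetful} and \ref{P:dforgetful} to factor through the coinvariant algebras; commutativity of the resulting square is inherited from the commutativity of the original OQT square via the naturality of the forgetful transformations and of $\graded$.

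The only step that requires any genuine care is the verification that the compatibility between the $\Gamma$-actions on $B_u$ and $C_u$ is compatible under $\gr$, i.e.\ that the right $\Gamma$-action on $C_u$ transports under $\phi$ to the induced right $\Gamma$-action on $\gr B_u$; this is where the OQT hypothesis is really used, since it is this identification that makes the coinvariant constructions agree on the two sides. Given Remark~\ref{R:ocgamma} this is a straightforward diagram chase using the uniqueness statements in the universal properties of $(\Q_u,\iota_\Q)$ and $(\A_u,\iota_\A)$. With this in hand the theorem follows immediately, and the proof amounts to a single sentence: combine Theorem~\ref{T:conicsymphaveinitials} with Lemma~\ref{L:gammainterplay}(2).
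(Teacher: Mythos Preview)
Your proposal is correct and follows essentially the same route as the paper: the paper's proof is the one-line ``Apply Lemma~\ref{L:gammainterplay} and Theorem~\ref{T:conicsymphaveinitials}'', and your argument is precisely an unpacking of Lemma~\ref{L:gammainterplay}(2), invoking Theorems~\ref{T:dforgetful}, \ref{T:qforgetful} and Propositions~\ref{P:dforgetful}, \ref{P:qforgetful} together with Remark~\ref{R:ocgamma}, exactly as the paper does in proving that lemma. Your closing sentence already identifies this.
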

\begin{proof}
Apply Lemma~\ref{L:gammainterplay} and Theorem~\ref{T:conicsymphaveinitials}.
\end{proof}

%
\section{Nilpotent Slodowy slices and their universal quantizations}
\label{S:nilpotentslodowy}
Throughout this section we use the following notation:
\begin{itemize}
\setlength{\itemsep}{4pt}
\item $G$ is a complex simple, simply-connected algebraic group;
\item $\g = \Lie(G)$ the Lie algebra;
\item $\kappa : \g \to \g^*$ is the $G$-equivariant isomorphism induced by the Killing form;
\item $\N(\g^*) = \kappa(\N(\g))$ is the nullcone of $\g^*$;
\item $e \in \N(\g)$ is a nilpotent element with $\chi := \kappa(e)$;
\item $(e,h,f)$ is an $\sl_2$-triple;
\item $\h$ is a maximal toral subalgebra containing $h$, $\Delta$ is the Dynkin diagram and $W$ the Weyl group;
\item for a given choice of a base for the root system $\Phi$ of $\g$,  $\rho$ is half-sum of the positive roots;
\item $\lambda : \C^\times \to G$ is a cocharacter with $d_1 \lambda(t) = th$;
\item $\Ss_\chi  = \chi + \kappa(\g^f) \subseteq \g^*$ is the Slodowy slice.
\end{itemize}

\subsection{Hamiltonian reduction}
\label{ss:Hamiltonianreduction}

Before we introduce the main objects of study of this section, we record some preliminaries on Hamiltonian reduction. In fact the ideas introduced here serve to generalise the well-known formalism of Hamiltonian reduction exhibited in the literature, and so we include a brief proof for the reader's convenience.

Let $N$ be a complex affine group and let $X$ be a complex affine Poisson variety. An $N$-action on $X$ is said to be {\it Hamiltonian} if there is an $N$-equivariant Poisson homomorphism $\mu^* : \C[\n^*] \to \C[X]$ which satisfies
\begin{eqnarray}
\label{e:comoment}
d_1 \rho(x)= \{\mu^*(x), \cdot\}
\end{eqnarray}
where $\rho : N \to \Aut \C[X]$ is the (locally finite) representation of $N$ on $\C[X]$ and $d_1\rho : \n \to \Der\C[X]$ is the differential. The map $\mu^*$ is called the {\it comoment map} whilst the induced morphism $\mu : X\to \n^*$ is the {\it moment map}. 

For any $N$-stable affine subvariety $Y \subseteq \n^*$  we define the {\it Hamiltonian reduction}
\begin{eqnarray}
\label{e:Hamiltonianred}
\mu^{-1}(Y)/\!/ N := \Spec (\C[\mu^{-1}(Y)]^N).
\end{eqnarray}
We note that, in general, the algebra $\C[\mu^{-1}(Y)]^N$ is not finitely generated and so the Hamiltonian reduction is a scheme but not necessarily an algebraic variety. Nonetheless the $N$-stable assumption on $Y$ ensures that Hamiltonian reduction inherits a Poisson structure.

\begin{Lemma}
\label{L:HamRed}
$\mu^{-1}(Y)/\!/N$ inherits a Poisson structure from $X$. The bracket is given by lifting functions to $\C[X]$, taking the Poisson bracket and restricting to $\mu^{-1}(Y)/\!/N$.
\end{Lemma}
\begin{proof}
Let $J \subseteq \C[\n^*]$ be the defining ideal of $Y$ with generators $f_1,...,f_n$. Then the defining ideal of the scheme-theoretic fibre $\mu^{-1}(Y)$ is $I = (\mu^*(f_i) \mid i =1,...,n) \unlhd \C[X]$.

Since $Y$ is $N$-stable and $\mu$ is $N$-equivariant it follows that $\mu^{-1}(Y)$ is $N$-stable, and so $N$ acts on $\C[\mu^{-1}(Y)]$. Since the action is locally finite we have $\C[\mu^{-1}(Y)]^N = \C[\mu^{-1}(Y)]^\n$.

In order to prove the claim it suffices to show that for $g_1 + I, g_2 + I \in \C[\mu^{-1}(Y)]^\n$ the bracket
$$\{g_1 + I, g_2 + I\} := \{g_1, g_2\} + I$$
is well-defined. This will follow if we show that every $g_1, g_2 \in \C[X]$ satisfying $g_i + I \in \C[\mu^{-1}(Y)]^\n$ actually lie in the Poisson idealiser of $I$, which is defined to be the subalgebra consisting of elements $g\in \C[X]$ such that $\{g, I\} \subseteq I$.

Let $g \in \C[X]$ be such that $g + I \in \C[\mu^{-1}(Y)]^\n$. The $\n$-invariance of $g + I$ can be rewritten as $d_1\rho(\n) (g + I) \subseteq I$. Since $I$ is $\n$-stable this is equivalent to $d_1 \rho(\n) g \subseteq I$. Now formula \eqref{e:comoment} implies that $\{\mu^*(\n), g\} \subseteq I$. Note that $\mu^*(f_i)$ lies in the symmetric algebra $S(\mu^*(\n)) \subseteq \C[X]$ and so applying the Leibniz identity we obtain $\{\mu^*(f_i), g\} \subseteq I$. This shows that $\{g, I\} \subseteq I$, and the proof is complete.
\end{proof}

\subsection{Poisson structures on Slodowy slices}
\label{ss:Poissonslices}

We begin by explaining how $\Ss_\chi$ is naturally equipped with a conical Poisson structure. This structure can be understood in two different ways: either as the transverse Poisson structure to $\g^*$ at $\chi$ as in \cite[\textsection 2.3]{DKV}, or alternatively via Hamiltonian reduction similar to \cite{GG}, as we now explain.

The torus $\lambda(\C^\times)$ induces a $\Z$-grading on $\g^*$ via $\g^*(i) = \{\xi \in \g^* \mid \lambda(t) \cdot \xi = t^i \xi\}$, where $\lambda(t) \cdot\xi$ denotes the coadjoint action. Using the representation theory of $\sl_2$, we have $\kappa(\g^f) \subseteq \bigoplus_{i \le 0} \g^*(i)$. Therefore the cocharacter $\C^\times \to \GL(\g^*)$ given by $t \mapsto t^{-2} \lambda(t)$ induces a contracting $\C^\times$-action on $\Ss_\chi$ with negative weights. This action defines a grading on both $\C[\g^*]$ and $\C[\Ss_\chi]$, known as the {\it Kazhdan grading}. It is readily seen that the Poisson bracket on $\C[\g^*]$ lies in degree $-2$, and that the grading on $\C[\Ss_\chi]$ is non-negative.

Thanks to the representation theory of $\sl_2$ we have an isomorphism $\ad(e) : \g(-1) \isoto \g(1)$, and it follows that the form $\omega \colon x ,y \mapsto \chi[x,y]$ on $\g(-1)$ is symplectic. We pick an isotropic subspace $\ell \subseteq \g(-1)$. We let $\ell^{\perp_\omega} \subseteq\g(-1)$ be the annihilator of $\ell$ with respect to $\omega$ and let $N_\ell \subseteq G$ be a unipotent algebraic group such that $\n_\ell = \Lie(N_\ell)$, where
\begin{eqnarray*}
& & \n_\ell = \ell^{\perp_\omega} \oplus \bigoplus_{i < -1} \g(i);\\
& & \m_\ell = \ell \oplus \bigoplus_{i < -1} \g(i).
\end{eqnarray*}

The group $N_\ell$ acts by Poisson automorphisms on $\g^*$ and $\mu_\ell^*$, and the restriction map $\mu_\ell : \g^* \to \n_\ell^*$ is $N_\ell$-equivariant. In fact $\mu_\ell$ is a comoment map for the Hamiltonian action of $N_\ell$ on $\g^*$, and this places us in the context for Hamiltonian reduction; see \cite[\textsection 5.4.4]{LPV}.

 Consider the set
\begin{eqnarray}
\label{e:introY}
Y_\ell := \chi|_{\n_\ell} + \Ann_{\n^*_\ell}(\m_\ell).
\end{eqnarray}
Thanks to \cite[Lemma 2.1]{GG} the coadjoint action gives an isomorphism
\begin{eqnarray}
\label{e:adjointisomorphism}
N_\ell \times \Ss_\chi \isoto \mu_\ell^{-1}(Y_\ell)  = \chi + \Ann_{\g^*}(\m_\ell).
\end{eqnarray}
Therefore $N_\ell$ acts freely on $\mu_\ell^{-1}(Y_\ell)$ and the slice $\Ss_\chi$ parameterises $N_\ell$-orbits in $\mu^{-1}_\ell(Y_\ell)$. It follows that there is a natural isomorphism of Kazhdan graded algebras
\begin{eqnarray}
\label{e:sliceclaim}
\C[\mu_\ell^{-1}(Y_\ell)]^{\ad(\n_\ell)} = \C[\mu_\ell^{-1}(Y_\ell)]^{N_\ell} \isoto \C[\Ss_\chi].
\end{eqnarray}

We write $I_\chi \subseteq \C[\g^*]$ for the defining ideal of $\mu_\ell^{-1}(Y_\ell)$, which is generated by $x - \chi(x)$ with $x\in \m_\ell$. By Lemma~\ref{L:HamRed} there is a Poisson structure on $\C[\mu_\ell^{-1}(Y_\ell)]^{\ad(\n_\ell)}$ given by $\{f + I_\chi, g + I_\chi\} := \{f,g\} + I_\chi$ for $f + I_\chi, g + I_\chi \in \C[\mu_\ell^{-1}(Y_\ell)]^{\ad(\n_\ell)}$. This Poisson structure is transferred from $\C[\mu_\ell^{-1}(Y_\ell)]^{\ad(\n_\ell)}$ to $\C[\Ss_\chi]$ via the isomorphism \eqref{e:sliceclaim}.

Observe that $\mu_\ell^{-1}(Y_\ell) \into \mu_0^{-1}(Y_0)$ and so $\C[\mu_0^{-1}(Y_0)] \onto \C[\mu_\ell^{-1}(Y_\ell)]$. Using the fact that $\Ss_\chi \subseteq \mu_\ell^{-1}(Y_\ell)$, along with \eqref{e:adjointisomorphism} we see that
 $\C[\mu_0^{-1}(Y_0)]^{\ad(\n_0)} \to \C[\mu_\ell^{-1}(Y_\ell)]^{\ad(\n_\ell)}$ which is an isomorphism of Poisson algebras because both algebras are isomorphic to $\C[\Ss_\chi]$ as Kazhdan graded algebras, by \eqref{e:sliceclaim}. Hence the Poisson structure which we have placed on $\Ss_\chi$ does not depend on $\ell$.

Now consider the Poisson subvariety $\Ss_{\chi, \N} := \Ss_\chi \cap \N(\g^*)$, known as the {\it nilpotent Slodowy variety}.
\begin{Lemma}
\label{L:Slodowyconicsymp}
$\Ss_{\chi, \N}$ is a conical symplectic singularity.
\end{Lemma}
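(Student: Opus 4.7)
\medskip

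\noindent\textbf{Proof plan.} My approach is to verify each of the defining properties of a conical symplectic singularity in turn: conicity of the Poisson structure, normality with generic non-degeneracy, and the existence of a resolution along which the symplectic form extends regularly.

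First I would check that $\Ss_{\chi,\N}$ inherits a conical Poisson structure as a subvariety of $\Ss_\chi$. By Kostant's theorem the defining ideal of $\N(\g^*)$ in $\C[\g^*]$ is generated by $\C[\g^*]^G_+$, which is Poisson central, so $\N(\g^*)$ is a Poisson subvariety of $\g^*$; the Hamiltonian reduction description \eqref{e:sliceclaim} then exhibits $\Ss_{\chi,\N}$ as a Poisson subvariety of $\Ss_\chi$. The Kazhdan $\C^\times$-action stabilises the $G$-stable cone $\N(\g^*)$ as well as $\Ss_\chi$ and fixes $\chi$, so the induced grading on $\C[\Ss_{\chi,\N}]$ is positive.

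Next, to obtain normality and generic symplecticity, I would invoke Slodowy's theorem: the adjoint quotient restricts to a flat $\C^\times$-equivariant map $\Ss_\chi \to \h^*/W$ whose total space is the affine space $\chi + \kappa(\g^f)$ and whose central fibre is $\Ss_{\chi,\N}$. This realises $\Ss_{\chi,\N}$ as a complete intersection of codimension $\operatorname{rk}(\g)$ in a smooth variety, hence Cohen--Macaulay; since its smooth locus contains the transverse intersection $\Ss_\chi \cap \O^{\reg}$ with the regular nilpotent orbit, Serre's criterion yields normality, and a dimension count combined with the transversality of $\Ss_\chi$ to coadjoint orbits shows that the Poisson bivector is non-degenerate on $\Ss_{\chi,\N}^{\reg}$.

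For the resolution I would pull back the Grothendieck--Springer map $\pi \colon \widetilde\N \to \N(\g^*)$ over $\Ss_{\chi,\N}$. Slodowy's simultaneous resolution exhibits $\widetilde{\Ss}_\chi := \pi^{-1}(\Ss_\chi)$ as smooth and the induced map $\widetilde{\Ss}_\chi \to \h^*$ as smooth, so the central fibre $\widetilde{\Ss}_{\chi,\N} = \pi^{-1}(\Ss_{\chi,\N})$ is smooth as well. The restriction $\pi \colon \widetilde{\Ss}_{\chi,\N} \to \Ss_{\chi,\N}$ is projective and an isomorphism over the dense open subset $\Ss_\chi \cap \O^{\reg}$, so it is a resolution of singularities. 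The canonical symplectic form on $\widetilde\N = T^*(G/B)$ then restricts to a regular $2$-form on $\widetilde{\Ss}_{\chi,\N}$ which over $\pi^{-1}(\Ss_{\chi,\N}^{\reg})$ agrees with the pullback of the symplectic form on $\Ss_{\chi,\N}^{\reg}$.

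The main obstacle will be ensuring compatibility of the various symplectic structures in play: the Poisson bracket on $\Ss_{\chi,\N}$ obtained from Hamiltonian reduction must agree with the transverse-slice structure of \cite{DKV} under which $\Ss_{\chi,\N}^{\reg} \subseteq \O^{\reg}$ is symplectic, and the $2$-form on $\widetilde{\Ss}_{\chi,\N}$ restricted from $T^*(G/B)$ must be a regular extension of that pulled back from $\Ss_{\chi,\N}^{\reg}$. These are standard but bookkeeping-intensive compatibilities; modulo them the lemma follows directly from Beauville's definition.
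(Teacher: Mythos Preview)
Your plan is correct and follows essentially the same architecture as the paper's proof: establish normality, identify the dense symplectic leaf $\Ss_\chi \cap \O^{\reg}$, and exhibit a symplectic resolution. The differences are only in the level of detail. For normality the paper simply cites \cite[\S 5]{PrST}, whereas you unwind the argument (complete intersection from flatness of $\Ss_\chi \to \h^*/W$, hence Cohen--Macaulay, then Serre's criterion using that the non-regular locus has codimension $\ge 2$). For the resolution the paper invokes \cite[Proposition~2.1.2]{Gi}, while you spell out that this is the restricted Springer map $\pi^{-1}(\Ss_{\chi,\N}) \to \Ss_{\chi,\N}$ together with Slodowy's simultaneous-resolution theorem for smoothness of the source; Ginzburg's proposition is exactly this statement. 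Your closing caveat about compatibility of the Hamiltonian-reduction Poisson structure with the transverse-slice structure and with the restricted Kostant--Kirillov form on $T^*(G/B)$ is well taken; the paper suppresses this bookkeeping by appealing to \cite[Corollary~6.16]{LPV} for the leaf description and to Ginzburg for the extension of the form.
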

\begin{proof}
Thanks to \cite[\textsection 5]{PrST} the fibres of the restriction of the adjoint quotient map $\Ss_{\chi} \to \h/W$ are irreducible and normal, in particular the zero fibre $\Ss_{\chi, \N}$ is normal. Since the Hamiltonian reduction of a smooth symplectic variety is symplectic \cite[Corollary~6.16]{LPV}, the symplectic leaves of $\Ss_\chi$ are the irreducible components of the intersections of the leaves of $\g^*$ with $\Ss_\chi$, i.e. the components of the intersections $G\cdot \xi \cap \Ss_\chi$ with $\xi \in \g^*$.
It follows that $\Ss_{\chi, \N}$ contains a dense symplectic leaf $\Ss_{\chi, \N}^\reg := \Ss_\chi \cap \O_{\reg}$ corresponding to the regular coadjoint orbit. This shows that $\Ss_{\chi, \N}$ is a Poisson variety of full rank, such that $\C[\Ss_{\chi, \N}]$ is positively graded  with bracket in degree $-2$. 
It follows from \cite[Proposition~2.1.2]{Gi} that $\Ss_{\chi, \N}$ admits a symplectic resolution, which completes the proof of the current lemma.
\end{proof}

\subsection{Finite $W$-algebras}
\label{ss:finiteWalgebras}
Let $\m_{\ell, \chi} = \{x - \chi(x) \mid x\in \m_\ell\} \subseteq U(\g)$ and consider the left ideal $J_\chi := U(\g) \m_{\ell, \chi}$. A short calculation will confirm that $\ad(\n_\ell)$ preserves $J_\chi$ and the $\ad(\n_\ell)$-invariants in the left $U(\g)$-module $Q := U(\g) / J_\chi$ inherit an algebra structure from $U(\g)$. The algebra $U(\g,e) := Q^{\ad(\n_\ell)}$ is known as the {\it finite $W$-algebra}.

Define a filtration $U(\g) = \bigcup_{i \in \Z} F_i U(\g)$ by placing $\g(i)$ in  $F_{i + 2}U(\g)$; we warn the reader that $F_i U(\g) \neq 0$ for all $i\in \Z$, contrary to the conventions of the rest of this paper. This descends to a non-negative filtration on both $Q$ and $U(\g,e)$ known as the {\it Kazhdan filtration}. The associated graded algebra is $\gr U(\g) \simeq \C[\g^*]$ and under this isomorphism we have an identification $\gr J_\chi = I_\chi$. Since $U(\g)$ is almost commutative with respect to this filtration (of degree $-2$) so too is $U(\g,e)$. Therefore $\gr U(\g,e)$ is equipped with a Poisson structure in the usual manner.

By \cite[Proposition~5.2]{GG} the natural inclusion $\gr U(\g,e)\subseteq (\C[\g^*] / I_\chi)^{\ad(\n_\ell)} \simeq \C[\Ss_\chi]$ is an equality, and it is not hard to check that the Poisson structure on $\gr U(\g,e)$ arising from the noncommutative multiplication coincides with the structure coming from Poisson reduction of $\C[\g^*]$. Thus $U(\g,e)$ is a filtered algebra quantizing the Kazhdan graded Poisson algebra $\C[\Ss_\chi]$.

\subsection{Casimirs on the Slodowy slice and the centre of the finite $W$-algebra}
\label{ss:casimirsandcentres}
We have chosen our maximal toral subalgebra $\h\subseteq \g$ so that $h\in \h$, and $W$ denotes the Weyl group. Therefore $\C[\h^*] \into \C[\g^*]$ is a Kazhdan graded subalgebra with $\h \subseteq \C[\h^*]$ concentrated in degree 2, and $W$ acts by graded automorphisms. The $\rho$-shifted invariants are denoted $\C[\h^*]^{W_\bullet}$ as usual.

The Poisson centre of $\C[\g^*]$ is $Z\C[\g^*] = \C[\g^*]^G$ and the centre of $U(\g)$ is $Z(\g) = U(\g)^G$. These algebras are well-understood by the Chevalley restriction theorem and the Harish-Chandra restriction theorem. Consider the natural projection maps
\begin{eqnarray}
\label{e:invariantsvscentres}
\begin{array}{rcccccl} Z\C[\g^*] &=& \C[\g^*]^{G} &\to & Z\C[\Ss_\chi] & & \vspace{4pt} \\
Z(\g) &=& U(\g)^{G} &\to & Z(\g,e) &:=& ZU(\g,e)\end{array}
\end{eqnarray}

\begin{Lemma} \cite[Footnote~1]{PrJI}
\label{L:centrelemma}
The maps \eqref{e:invariantsvscentres} are isomorphisms.
$\hfill \qed$
\end{Lemma}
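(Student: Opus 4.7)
The plan is to prove the classical statement first and then deduce the quantum one by taking associated graded with respect to the Kazhdan filtration.

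For the map $\C[\g^*]^G \to Z\C[\Ss_\chi]$, the image lies in the Poisson centre because the identification \eqref{e:sliceclaim} presents $\C[\Ss_\chi]$ as a Poisson reduction of $\C[\g^*]$, and any Poisson reduction carries the Poisson centre of the source into the Poisson centre of the target. Injectivity is immediate from surjectivity of the restricted adjoint quotient $\varphi \colon \Ss_\chi \to \h^*/W$, via the composition $\C[\h^*/W] \cong \C[\g^*]^G \to \C[\Ss_\chi]$.

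For surjectivity, let $f \in Z\C[\Ss_\chi]$. Since the Poisson structure on $\Ss_\chi$ comes from Hamiltonian reduction, its symplectic leaves are the irreducible components of the intersections $\Ss_\chi \cap G\cdot \xi$, as already used in the proof of Lemma~\ref{L:Slodowyconicsymp}. The fibres of $\varphi$ are irreducible by Premet \cite[\textsection 5]{PrST}, and for a regular $\xi \in \Ss_\chi$ the orbit $G \cdot \xi$ is dense in its adjoint-quotient fibre in $\g^*$, so the symplectic leaf of $\Ss_\chi$ through $\xi$ is dense in $\varphi^{-1}(\varphi(\xi))$. The Poisson-central function $f$ is constant on this leaf, and by continuity together with irreducibility of the fibre it is therefore constant on the whole fibre $\varphi^{-1}(\varphi(\xi))$ for every $\xi$. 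Thus $f$ is fibrewise constant along the flat surjection $\varphi$, and faithfully flat descent identifies $f$ with the pullback of some $g \in \C[\h^*/W] \cong \C[\g^*]^G$.

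For the quantum statement, $Z(\g)$ is $\ad(\g)$-stable, hence descends to $Q^{\ad(\n_\ell)} = U(\g,e)$, and centrality in $U(\g)$ forces the image to lie in $Z(\g,e)$. With respect to the Kazhdan filtration one obtains a chain
$$\C[\g^*]^G = \gr Z(\g) \hookrightarrow \gr Z(\g,e) \hookrightarrow Z(\gr U(\g,e)) = Z\C[\Ss_\chi],$$
whose composition is precisely the classical isomorphism just established. Hence both intermediate inclusions are equalities, and a standard filtered/graded argument lifts this to bijectivity of $Z(\g) \to Z(\g,e)$.

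The main obstacle lies in surjectivity of the classical map; its technical core is the identification of the symplectic leaf through a regular point of $\Ss_\chi$ as a dense subset of an irreducible fibre of $\varphi$. Once this is in place, both the passage from ``constant on a generic leaf'' to ``constant on every fibre'' and the descent to a regular function on $\h^*/W$ are formal consequences of flatness and Premet's irreducibility theorem.
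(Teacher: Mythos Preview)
The paper does not give its own proof of this lemma: the statement carries a bare $\qed$ and a citation to \cite[Footnote~1]{PrJI}. Your argument is therefore a genuine self-contained proof where the paper offers none, and it is essentially correct.

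Two points deserve tightening. First, in the classical surjectivity step you need that \emph{every} fibre $\varphi^{-1}(c)$, not just a generic one, contains a dense open symplectic leaf. This holds because $\Ss_\chi$ and $\h^*/W$ are smooth, $\varphi$ is flat with reduced irreducible fibres (Premet), hence $\varphi$ is a smooth morphism along the smooth locus of each fibre, and the smooth locus of $\varphi$ is exactly $\Ss_\chi \cap \g^*_\reg$; the intersection with the regular orbit is then Zariski-open, non-empty and irreducible (hence Euclidean-connected) in $\varphi^{-1}(c)$, so it is a single leaf. Your phrasing ``for every $\xi$'' at the end of that paragraph is correct in intent but the justification should be made fibre-by-fibre rather than via ``regular $\xi$'' alone. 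Second, ``faithfully flat descent'' is heavier than required: since $\h^*/W\cong\C^r$ is normal and $\varphi$ is surjective with connected fibres, any regular function constant on fibres already lies in $\varphi^*\C[\h^*/W]$ (equivalently, use freeness of $\C[\Ss_\chi]$ over $\C[\h^*/W]$).

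The quantum reduction via the chain $\gr Z(\g)\hookrightarrow \gr Z(\g,e)\hookrightarrow Z\gr U(\g,e)$ is standard and correctly carried out; strictness of the relevant filtered maps follows because the filtrations on $Z(\g)$ and $Z(\g,e)$ are induced from $U(\g)$ and $U(\g,e)$ respectively.
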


We have the following commutative diagram
\begin{eqnarray}
\label{e:centres}
\begin{tikzcd}[column sep=1.5em, row sep = 1.5em]
\C[\g^*] \arrow{d}{\text{res}} & \arrow[l,hook'] \C[\g^*]^G \arrow{d}{\simeq} \arrow{r}{\simeq} & \C[\h^*]^W \arrow{r}{\simeq} & \C[\h^*]^{W_\bullet} & \arrow{l}[swap]{\simeq} U(\g)^G \arrow{d}{\simeq} \arrow[r,hook] & U(\g) \arrow{d}{\text{pr}} \\
\C[\Ss_\chi] & \arrow[l, hook'] Z\C[\Ss_\chi] \arrow{rrr}{\simeq} & & & Z(\g,e) \arrow[r, hook] & Q.
\end{tikzcd}
\end{eqnarray}
The restriction map $\C[\g^*]^G \to \C[\h^*]^W$ is an isomorphism by Chevalley's restriction theorem, $U(\g)^G \to \C[\h^*]^{W_\bullet}$ is the Harish-Chandra isomorphism and the isomorphism $\C[\h^*]^W \to \C[\h^*]^{W_\bullet}$ is the shift map $x \mapsto x - \rho(x)$ which sends invariants to $\rho$-shifted invariants. The isomorphism $Z\C[\Ss_\chi] \to Z(\g,e)$ is the unique map making the diagram commute. Every algebra on the left half of \eqref{e:centres} is Kazhdan graded. Furthermore, if we grade $\C[\h^*]$ by placing $\{x - \rho(x) \mid x \in \h\}$ in degree 2 then $\C[\h^*]^{W_\bullet}$ is a graded subalgebra and $\C[\h^*]^W \to \C[\h^*]^{W_\bullet}$ is a graded homomorphism. Furthermore the isomorphism $\C[\h^*]^{W_\bullet} \to Z(\g,e)$ is strict for the Kazhdan filtration.

\subsection{The universal deformation of a nilpotent Slodowy slice}
\label{ss:universaldeformationoftheslice}
For the rest of the paper we identify $Z\C[\Ss_\chi] = \C[\h^*/W]$ and $Z(\g,e) = \C[\h^*/{W_\bullet}]$ as Kazhdan graded algebras, via \eqref{e:centres}. Since the scheme-theoretic fibres of the adjoint quotient map $\Ss_\chi \to \h^*/W$ are reduced \cite[Theorem~5.4(ii)]{PrST}, it follows from Kostant's theorem \cite[Proposition~7.13]{JaNO} that $\C[\Ss_\chi] \otimes_{\C[\h^*/W]} \C_+ \simeq \C[\Ss_{\chi, \N}]$ as graded Poisson algebras. For the rest of the section we pick a graded Poisson isomorphism
\begin{eqnarray*}
\iota : \C[\Ss_\chi] \otimes_{\C[\h^*/W]} \C_+ \to \C[\Ss_{\chi, \N}].
\end{eqnarray*}
By \cite[Corollary~7.4.1]{Slo}, the adjoint quotient map $\Ss_\chi \to \h^*/W$ is flat, which completes the proof of the next result.
\begin{Lemma}
\label{L:Walgebragraded}
Set $A = \C[\Ss_{\chi, \N}]$. Then the following hold:
\begin{enumerate}[label=(\arabic*)]
\item $(\C[\Ss_\chi], \iota) \in \PD_A(\C[\h^*/W])$;
\item $(U(\g,e),  \iota) \in \Qnt_A(\C[\h^*/{W_\bullet}])$;
\item $(\C[\Ss_\chi], \iota)$ is the associated graded deformation  of $(U(\g,e), \iota)$. $\hfill \qed$
\end{enumerate}
\end{Lemma}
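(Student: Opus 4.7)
The plan is to verify each of the three assertions directly against Definitions \ref{def_gradedPD} and \ref{D:quant}, assembling the structural inputs collected in Sections \ref{ss:Poissonslices}--\ref{ss:universaldeformationoftheslice} with $n = 2$.

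For (1), I would check the axioms with $B = \C[\h^*/W]$. The Kazhdan grading on $\C[\Ss_\chi]$ makes it a non-negatively graded Poisson algebra with bracket in degree $-2$ (Section \ref{ss:Poissonslices}), and the identification $\C[\h^*/W] \simeq Z\C[\Ss_\chi] \hookrightarrow \C[\Ss_\chi]$ from diagram \eqref{e:centres}, combined with Lemma \ref{L:centrelemma}, is a graded embedding whose image is Poisson central. Flatness is precisely Slodowy's theorem \cite[Corollary~7.4.1]{Slo}, invoked just above the lemma statement, and $\iota$ was chosen as an isomorphism of graded Poisson algebras.

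For (2), one first checks that $\C[\h^*/W_\bullet]$ belongs to $\SFAlg$: the shift isomorphism in \eqref{e:centres} exhibits the Kazhdan-filtered algebra as the filtered version of the graded algebra $\C[\h^*/W]$. The Harish--Chandra inclusion $\C[\h^*/W_\bullet] \simeq Z(\g,e) \hookrightarrow U(\g,e)$ is strictly filtered by the final sentence of Section \ref{ss:casimirsandcentres}, and the Kazhdan filtration on $U(\g,e)$ has degree $-2$ (Section \ref{ss:finiteWalgebras}). The main obstacle is flatness of $U(\g,e)$ as a $Z(\g,e)$-module. I would obtain it by transfer from the graded picture: by \cite[Proposition~5.2]{GG} we have an isomorphism of graded Poisson algebras $\gr U(\g,e) \simeq \C[\Ss_\chi]$ identifying $\gr Z(\g,e)$ with $\C[\h^*/W]$, and part (1) together with Lemma \ref{L:flatequalsfree} ensures $\gr U(\g,e)$ is free over $\gr Z(\g,e)$. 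A standard Rees-algebra argument in this exhaustive, non-negative, Noetherian setting then lifts flatness from the associated graded to the filtered level. The second axiom of Definition \ref{D:quant} reduces, via the isomorphism $\gr U(\g,e) \simeq \C[\Ss_\chi]$ of \cite[Proposition~5.2]{GG}, to precisely the assertion of part (1), once one checks that the induced map $\gr Z(\g,e) \to \gr U(\g,e)$ is the $\C[\h^*/W]$-structure map of (1) — this is immediate from the commutativity of \eqref{e:centres}.

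For (3), the identification is essentially contained in the verification of the second axiom of (2): the canonical isomorphism $\gr U(\g,e) \simeq \C[\Ss_\chi]$ of graded Poisson algebras from \cite[Proposition~5.2]{GG} is compatible, via \eqref{e:centres}, with the $\C[\h^*/W]$-algebra structure on both sides, and composing with $\iota$ gives the required isomorphism of elements of $\PD_A(\C[\h^*/W])$. The only non-routine ingredient throughout remains the graded-to-filtered flatness transfer in (2); every other step is a direct bookkeeping of already-established compatibilities.
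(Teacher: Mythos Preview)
Your proposal is correct and matches the paper's approach: the lemma carries a $\qed$ at the end of its statement, with the paper regarding it as an immediate consequence of the facts assembled in Sections~\ref{ss:Poissonslices}--\ref{ss:universaldeformationoftheslice} together with Slodowy's flatness result. Your write-up simply unpacks those verifications in detail; the only place you add content beyond the paper's explicit discussion is the graded-to-filtered flatness transfer for $U(\g,e)$ over $Z(\g,e)$, which the paper leaves implicit.
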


Combining Theorem~\ref{T:conicsymphaveinitials} and Lemma~\ref{L:Slodowyconicsymp} we see that the graded Poisson algebra $\C[\Ss_{\chi, \N}]$ admits an OQT.
 It is natural to wonder under what circumstances the objects in Lemma~\ref{L:Walgebragraded} are universal. This question was answered comprehensively by Lehn--Namikawa--Sorger, as we recalled in the introduction to this paper. We record their result here for the reader's convenience.
\begin{Theorem} \cite[Theorem~1.2 \& 1.3]{LNS} 
\label{T:LSNtheorem}
Let $\g$ be a simple Lie algebra and $e \in \g$ be a nilpotent element with orbit $\O$.

Set $A = \C[\Ss_{\chi, \N}]$.
Then the following are equivalent:
\begin{enumerate}
\item $\PD_A$ is represented by $\C[\h^*/{W}]$ and
$(\C[\Ss_\chi], \iota)$ is a universal Poisson deformation of $A$.
\item $(\g, \O)$ does not occur in the following table.
\begin{center}

\medskip

\begin{tabular}{|c|c|c|c|c|c|c|}
\hline
Type of $\g$ & Any & {\sf BCFG} & {\sf C} & \sf{G} 

\\
\hline

Type of $\O$ & Regular & Subregular & Two Jordan blocks & dimension 8 \\ 
\hline
\end{tabular}
\end{center}
\begin{center}
\end{center}
\end{enumerate}
\end{Theorem}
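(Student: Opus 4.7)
The plan is to leverage Namikawa's cohomological characterisation of the universal Poisson deformation of a conic symplectic singularity, reduce the question to comparing two concrete cohomology groups via the Grothendieck--Springer resolution, and then carry out the resulting condition orbit-by-orbit.

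First, by Lemma~\ref{L:Slodowyconicsymp}, $\Ss_{\chi,\N}$ is a conic symplectic singularity, so by Namikawa's theorem the functor $\PD_A$ is representable by a non-negatively graded polynomial algebra $C_u$ whose degree-$2$ component is canonically dual to $H^2(\tX^\reg, \C)$ for a $\mathbb{Q}$-factorial terminalization $\tX$. By Lemma~\ref{L:Walgebragraded}, the Slodowy family provides a graded Poisson deformation of $A$ over $\C[\h^*/W]$, hence a classifying morphism $\beta \colon C_u \to \C[\h^*/W]$. Since $\C[\h^*/W]$ is a polynomial algebra generated in degree $2$ (after Kazhdan grading) by the Chevalley restriction theorem, and $C_u$ is also generated in degree $2$, the morphism $\beta$ is an isomorphism of graded algebras if and only if it is an isomorphism on the degree-$2$ part. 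This reduces matters to a comparison of two finite-dimensional vector spaces.

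Second, I would identify this linear map geometrically. The Grothendieck--Springer simultaneous resolution $\pi\colon \widetilde \N \to \g^*$ restricts over the slice to a resolution $\widetilde \Ss_\chi \to \Ss_\chi$, which (up to contraction along the $\C^\times$-action) furnishes a model of $\tX$ compatible with the Slodowy deformation. Standard facts then supply isomorphisms $H^2(\widetilde \N, \mathbb{Q}) \cong \h^*$ and $H^2(\tX^\reg, \C) \cong H^2(\pi^{-1}(\chi), \C)$, and under these the dual of $\beta$ is identified with the natural restriction map $H^2(\widetilde \N, \mathbb{Q}) \to H^2(\pi^{-1}(\chi), \mathbb{Q})$ induced by inclusion of the Springer fibre. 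This establishes the equivalence of condition~(1) with the cohomological criterion stated in the introduction to the paper.

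Third, one must determine when this restriction map fails to be an isomorphism. Since both sides have dimension at most $\dim \h$ and the map is surjective in good situations, the issue is when $\dim H^2(\pi^{-1}(\chi), \mathbb{Q}) < \dim \h$. This can be read from the topology of Springer fibres: for the regular orbit the fibre is a point, giving a trivial failure; for the subregular orbit in non-simply-laced types, Slodowy's folding theorem identifies the Springer fibre with a Dynkin configuration of projective lines having fewer components than $\operatorname{rk} \g$; the two sporadic cases in type $\sf C$ (two Jordan blocks) and $\sf G_2$ (the $8$-dimensional orbit) are detected by direct computation of the relevant $H^2$ using explicit resolution data.

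The principal obstacle is this third step: establishing the full classification requires uniform control on the topology of Springer fibres for every nilpotent orbit in every simple type. The simply-laced cases are reasonably well-handled via Brieskorn's theorem and general invariant theory, but the non-simply-laced and exceptional cases demand separate analysis combining Slodowy's folding construction, the Springer correspondence, and explicit geometric calculation. Identifying precisely the short list of exceptions, and no others, is the technical heart of the Lehn--Namikawa--Sorger argument.
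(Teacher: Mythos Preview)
The paper does not give its own proof of this theorem: it is recorded as a direct citation of \cite[Theorem~1.2 \& 1.3]{LNS}. The only argumentation the paper supplies is the paragraph immediately following the statement, explaining (i) that the formal universality established in \cite{LNS} globalises to universality in the sense of this paper via \cite[\S 5]{Na1} and \cite[\S 2.2]{Lo}, and (ii) that the regular orbit, not discussed in \cite{LNS}, is handled by Kostant's theorem that $\Ss_\chi \to \h^*/W$ is an isomorphism when $\chi$ is regular. Your proposal, by contrast, sketches the actual Lehn--Namikawa--Sorger proof strategy (Namikawa's cohomological criterion, identification with the restriction map $H^2(\widetilde\N,\mathbb{Q}) \to H^2(\pi^{-1}(\chi),\mathbb{Q})$, and case analysis of Springer fibres); this is faithful to the structure of \cite{LNS} and is exactly the equivalence (1)$\Leftrightarrow$(2)$\Leftrightarrow$(3) recalled in the introduction of the present paper.

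One technical slip to flag: your claim that $\C[\h^*/W]$ is ``generated in degree $2$ (after Kazhdan grading)'' is not correct. The subalgebra $\C[\h^*]$ is generated in Kazhdan degree $2$, but the invariants $\C[\h^*]^W$ are generated by the fundamental invariants in Kazhdan degrees $2d_1,\dots,2d_r$ (see for instance the proof of Theorem~\ref{T:subregularuniversal_specialcase}). Likewise the universal base $C_u$ need not be generated in a single degree. This does not invalidate the strategy---the genuine reduction is to the period/Kodaira--Spencer map, not to a comparison of degree-$2$ pieces---but the sentence as written is inaccurate and the argument for ``$\beta$ is an isomorphism iff it is so on degree $2$'' would need to be replaced by the correct tangent-space/cohomological formulation.
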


 In \cite{LNS} the authors actually classified the nilpotent orbits for which the adjoint quotient $\Ss_\chi \to \h^*/W$ is the formally universal Poisson deformation. It is explained by Namikawa in \cite[\textsection 5]{Na1} that when the underlying affine Poisson variety is conical a formally universal deformation can be globalised, leading to a universal Poisson deformation in the sense of the current paper (see also \cite[\textsection 2.2]{Lo}). The regular Slodowy slice is not discussed explicitly in \cite{LNS}, however it is a classical theorem of Kostant \cite{Ko2} that $\Ss_\chi \to \h^*/W$ is an isomorphism for $\chi$ regular, and so the Poisson structure is trivial in these cases by Lemma~\ref{L:centrelemma}. 
  
 The following is one of our main results. For the proof one should combine Lemma \ref{L:interplay}, Lemma~\ref{L:Slodowyconicsymp}, Lemma~\ref{L:Walgebragraded} and Theorem~\ref{T:LSNtheorem}.

\begin{Theorem}
\label{T:Walgfilteredquant}
Set $A = \C[\Ss_{\chi, \N}]$. The following are equivalent:
\begin{enumerate}[label=(\arabic*)]
\item $\Qnt_A$ is represented by $\C[\h^*/{W_\bullet}]$ and $(U(\g,e), \iota)$ is a universal element of $\Qnt_A$;
\item the orbit of $e$ is not listed in the above table.  $\hfill \qed$
\end{enumerate}
\end{Theorem}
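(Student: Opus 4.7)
The plan is to deduce the equivalence by combining Theorem~\ref{T:LSNtheorem}, which characterises when the Slodowy slice is a universal Poisson deformation of $\Ss_{\chi,\N}$, with the functorial interplay between deformations and quantizations recorded in Lemma~\ref{L:interplay}. The key enabling observation, already noted immediately before the theorem statement, is that $A = \C[\Ss_{\chi,\N}]$ admits an OQT: this follows from Theorem~\ref{T:conicsymphaveinitials} applied to the conic symplectic singularity provided by Lemma~\ref{L:Slodowyconicsymp}.

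For the implication $(2) \Rightarrow (1)$, I would begin from Lemma~\ref{L:Walgebragraded}: it furnishes $(U(\g,e), \iota) \in \Qnt_A(\C[\h^*/W_\bullet])$ whose associated graded element, via the identification $\gr \C[\h^*/W_\bullet] \cong \C[\h^*/W]$ spelled out in Section~\ref{ss:casimirsandcentres}, is precisely $(\C[\Ss_\chi], \iota) \in \PD_A(\C[\h^*/W])$. Under assumption $(2)$, Theorem~\ref{T:LSNtheorem} tells us that $(\C[\Ss_\chi], \iota)$ is a universal element of $\PD_A$, and Lemma~\ref{L:interplay} then transports universality to the quantum side: $(U(\g,e), \iota)$ is a universal element of $\Qnt_A$, and in particular $\Qnt_A$ is represented by $\C[\h^*/W_\bullet]$.

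For $(1) \Rightarrow (2)$, I would use the OQT machinery in the opposite direction. Representability of $\Qnt_A$ over $B_u := \C[\h^*/W_\bullet]$ with universal element $(U(\g,e), \iota)$ forces, via axioms (ii) and (iii) of Definition~\ref{D:oc}, that $\PD_A$ is represented by $\gr B_u \cong \C[\h^*/W]$ with universal element $\graded_{B_u}(U(\g,e), \iota) = (\gr U(\g,e), \iota)$. By Lemma~\ref{L:Walgebragraded} this element coincides with $(\C[\Ss_\chi], \iota)$. The converse direction of Theorem~\ref{T:LSNtheorem} now rules $\O$ out of the table.

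The only delicate point to verify carefully is the bookkeeping of Kazhdan gradings and strict filtered isomorphisms: specifically, one must check that the identification $\gr \C[\h^*/W_\bullet] \cong \C[\h^*/W]$ as graded Poisson algebras matches the universal element $(U(\g,e), \iota)$ with $(\C[\Ss_\chi], \iota)$ in the sense required by the universal property and by Lemma~\ref{L:interplay}. This compatibility is, however, baked into Section~\ref{ss:casimirsandcentres} and Lemma~\ref{L:Walgebragraded}, so I do not anticipate a substantive obstacle: the heavy semiclassical lifting is entirely the work of Lehn--Namikawa--Sorger, while the OQT of Losev--Namikawa provides the bridge between Poisson deformations and their filtered quantizations.
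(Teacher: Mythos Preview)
Your proposal is correct and follows essentially the same approach as the paper, which simply lists Lemma~\ref{L:interplay}, Lemma~\ref{L:Slodowyconicsymp}, Lemma~\ref{L:Walgebragraded} and Theorem~\ref{T:LSNtheorem} as the ingredients to combine. Your write-up is more detailed, in particular spelling out the $(1)\Rightarrow(2)$ direction explicitly via the OQT axioms (a converse to Lemma~\ref{L:interplay} that the paper leaves implicit), which is a helpful elaboration rather than a different argument.
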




\begin{Remark}
\label{R:simplylacedisomorphisms}
The universal property in Theorem \ref{T:Walgfilteredquant} leads to exceptional isomorphisms with other interesting algebras arising in representation theory.
In particular, \cite[Proposition~3.17]{Lo}  shows that a universal quantization of a simple surface singularity is given by (the Namikawa--Weyl group invariants in) the rational Cherednik algebra for the Weyl group of the same Dynkin type.
By the work of Brieskorn and Slodowy we know that these surface singularities are isomorphic to subregular nilpotent Slodowy slices for simply-laced Lie algebras.
Hence the subregular simply-laced finite $W$-algebras are isomorphic to the corresponding spherical symplectic reflection algebras.
This observation also follows from Losev's Theorems 5.3.1 \& 6.2.2 of \cite{Lo12}.
\end{Remark}

\section{Deformations in the subregular case}
\label{S:subregular}

We retain the notation and assumptions of Section~\ref{S:nilpotentslodowy}. On top of this we assume henceforth that $e\in\g$ is a subregular element.

\subsection{The subregular slice and the automorphism group}
\label{ss:automorphismsandsubregular}
Consider the subgroup $C = C(e,h,f) \subseteq \Aut(\g)$ consisting of automorphisms fixing the triple. Its structure is described in \cite[\textsection 7.5]{Slo}. The action of $C$ on $\g$ descends to an action on $\Ss_\chi$.
\begin{Lemma}
\label{L:Gamma0Poisson}
$C$ acts on $\C[\Ss_\chi]$ by graded Poisson automorphisms.
\end{Lemma}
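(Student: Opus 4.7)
The plan is to verify directly that every element of $C$ preserves each piece of data used to build the Kazhdan-graded Poisson algebra $\C[\Ss_\chi]$ via Hamiltonian reduction, and to conclude that the induced action on $\C[\Ss_\chi]$ is by graded Poisson automorphisms.

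First I would record the basic invariance properties of $\sigma \in C$. Any Lie algebra automorphism preserves the Killing form, so the induced action $\sigma \cdot \xi = \xi \circ \sigma^{-1}$ on $\g^*$ satisfies $\sigma\cdot \kappa(x) = \kappa(\sigma(x))$ for $x\in \g$; in particular $\sigma \cdot \chi = \chi$ because $\sigma(e)=e$. Since $\sigma(h)=h$, the automorphism commutes with the cocharacter $\lambda$, hence preserves the grading $\g = \bigoplus_i \g(i)$ and the Kazhdan grading on $\C[\g^*]$. Since $\sigma(f)=f$, we have $\sigma(\g^f)=\g^f$, and combined with the previous two points this gives $\sigma(\Ss_\chi)=\Ss_\chi$.

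Next I would realize $\C[\Ss_\chi]$ by Hamiltonian reduction with $\ell = 0$, so that $\n_0 = \bigoplus_{i<0}\g(i)$ and $\m_0 = \bigoplus_{i<-1}\g(i)$ are manifestly graded subspaces. Because $C$ preserves the grading, it normalises both $\m_0$ and $\n_0$ and hence acts on the unipotent group $N_0$ with Lie algebra $\n_0$. Combined with $\sigma \cdot \chi = \chi$, this shows that $\sigma$ preserves $Y_0 = \chi|_{\n_0} + \mathrm{Ann}_{\n_0}(\m_0) \subset \n_0^*$ and therefore the scheme $\mu_0^{-1}(Y_0) = \chi + \mathrm{Ann}_{\g^*}(\m_0)$, together with the left $N_0$-action on it.

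The Poisson bracket on $\C[\g^*]$ is $\Aut(\g)$-equivariant (since $C$ preserves both the commutative product and the Lie bracket on $\g$, the induced map on $S(\g)=\C[\g^*]$ is a Poisson automorphism), and the Kazhdan grading is preserved by the previous paragraph. Passing to the quotient $\C[\mu_0^{-1}(Y_0)] = \C[\g^*]/I_\chi$ and then taking $N_0$-invariants is compatible with the $C$-action because $C$ preserves $I_\chi$ and normalises $N_0$. Via the identification $\C[\Ss_\chi]\simeq \C[\mu_0^{-1}(Y_0)]^{\ad(\n_0)}$ of graded Poisson algebras recorded in \eqref{e:sliceclaim} and the surrounding discussion, the $C$-action on $\C[\Ss_\chi]$ is therefore by graded Poisson automorphisms. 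The only mild subtlety, and hence the step I would be most careful about, is checking that the identification \eqref{e:sliceclaim} is $C$-equivariant; but this follows at once because the isomorphism $N_0 \times \Ss_\chi \isoto \mu_0^{-1}(Y_0)$ of \eqref{e:adjointisomorphism} is induced by the coadjoint action of $G$, and the action of $C$ on $\g$ commutes with this construction once we know that $C$ normalises $N_0$ and preserves $\Ss_\chi$.
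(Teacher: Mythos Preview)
Your proof is correct and follows essentially the same route as the paper: both set $\ell=0$, observe that $C$ preserves the grading (hence $\m_0$, $\n_0$, and the ideal $I_\chi$), and deduce that $C$ acts by graded Poisson automorphisms on $\C[\mu_0^{-1}(Y_0)]^{\ad(\n_0)}\simeq \C[\Ss_\chi]$. Your version is somewhat more explicit about the invariance of $\chi$ and the $C$-equivariance of the identification \eqref{e:sliceclaim}, but the argument is the same.
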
 
\begin{proof}
Recall the notation $\ell, N_\ell, \mu, I_\chi, Y_\ell$ from Section \ref{ss:Poissonslices} and set $\ell = 0$ and $\mu:=\mu_0$. Since the Poisson structure on $\C[\Ss_\chi]$ is defined via the graded isomorphism \eqref{e:sliceclaim} it will suffice to show that $C$ acts by Poisson automorphisms on $\C[\mu^{-1}(Y_0)]^{N_0}$. Since $C$ preserves the graded pieces of $\g$, it stabilises both $\m_0$ and $\n_0$, and furthermore acts on $\C[\g^*]$ by automorphisms which preserve the Kazhdan grading. The defining ideal $I_\chi$ of $\mu^{-1}(Y_0)$ in $\C[\g^*]$ is generated by the Kazhdan graded vector space $\{x - \chi(x) \mid x\in \m_0\}$ and so $C$ acts by graded automorphisms on $\C[\mu^{-1}(Y_0)]^{\ad(\n_0)}$. Since $N_0$ is connected and unipotent the latter algebra coincides with $\C[\mu^{-1}(Y_0)]^{N_0}$. To see that the $C$-action on $\C[\mu^{-1}(Y_0)]^{N_0}$ is Poisson it suffices to recall that $\{f + I_\chi, g + I_\chi\} := \{f,g\} + I_\chi$ for $f + I_\chi, g + I_\chi \in \C[\mu^{-1}(Y_0)]^{N_0}$.
\end{proof}

\subsection{The equivariant universal deformation of a subregular nilpotent Slodowy slice}
\label{ss:equivariantuniversal}

Assume now $\g_0$ is not simply-laced and choose a simple Lie algebra $\g$ by determining the Dynkin type as follows:
\begin{eqnarray}
\label{e:choosingdynkintypes}
\left\{\begin{array}{cl} {\sf A_{2n-1}} & \text{if } \g_0 \text{ is of type } {\sf B_{n}}\\
																{\sf D_{n+1}} & \text{if } \g_0 \text{ is of type } {\sf C_{n}}\\
																{\sf E_{6}} & \text{if } \g_0 \text{ is of type } {\sf F_{4}}\\
																{\sf D_{4}} & \text{if } \g_0 \text{ is of type } {\sf G_{2}}.
																\end{array}\right\}.
\end{eqnarray}
In this section we consider the subregular Slodowy slice in $\g_0^*$ and so we use notation $e_0, \chi_0, \Ss_{\chi_0}$ to mirror the notation for $\g$. The nilpotent subregular Slodowy slice for $\g_0$ is denoted $\Ss_{\chi_0, \N_0}$. The following lemma appeared in \cite[Lemma~2.23]{EG}, we include here another proof for the reader's convenience.
\begin{Lemma}
\label{L:subregularPoissoniso}
The Poisson varieties $\Ss_{\chi, \N}$ and $\Ss_{\chi_0, \N_0}$ are $\C^\times$-isomorphic.
\end{Lemma}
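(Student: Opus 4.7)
The plan is to leverage Slodowy's classical identification of both subregular nilpotent Slodowy slices with the same Kleinian surface singularity, and then to upgrade this variety isomorphism to a Poisson isomorphism by a rigidity argument exploiting the two-dimensionality of the regular locus.

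First I would invoke the theorem of Brieskorn--Slodowy: for simply-laced $\g$ of type $\Delta$, the variety $\Ss_{\chi,\N}$ is $\C^\times$-equivariantly isomorphic to the Kleinian singularity $\C^2/\Gamma$, where $\Gamma \subseteq \SL_2(\C)$ is the finite subgroup corresponding to $\Delta$ under the McKay correspondence. Second, I would apply Slodowy's extension to the non-simply-laced setting, which asserts that $\Ss_{\chi_0,\N_0}$ is $\C^\times$-equivariantly isomorphic to the same variety $\C^2/\Gamma$ (for the unfolding $\Delta$ of $\Delta_0$, not $\Delta_0$ itself, by the correspondence \eqref{e:choosingdynkintypes}). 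Composing these identifications yields a $\C^\times$-equivariant isomorphism of varieties $\Psi \colon \Ss_{\chi,\N} \isoto \Ss_{\chi_0,\N_0}$.

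Next I would promote $\Psi$ to a Poisson isomorphism by a uniqueness argument. Both sides carry graded Poisson brackets of Kazhdan degree $-2$ obtained from Hamiltonian reduction, and by Lemma~\ref{L:Slodowyconicsymp} both restrict to nondegenerate symplectic forms on the regular locus. Transporting the Poisson bracket on $\Ss_{\chi_0,\N_0}$ through $\Psi$ yields a second graded Poisson bracket on $\Ss_{\chi,\N}$ of the same degree, and it suffices to show that the two agree up to a nonzero scalar. Working on $\C^2/\Gamma$, the smooth locus pulls back to $\C^2 \setminus \{0\}$, on which every $\Gamma$-invariant algebraic Poisson bivector is a regular function times the standard symplectic bivector $\partial_x \wedge \partial_y$; the condition of having Kazhdan weight $-2$ forces this coefficient to be a nonzero constant. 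Descending and composing $\Psi$ with an element of the Kazhdan $\C^\times$ that rescales the bracket by the required factor yields the desired $\C^\times$-equivariant Poisson isomorphism.

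The main obstacle is the first step: producing the variety isomorphism compatibly with the $\C^\times$-actions. Slodowy's construction in the non-simply-laced case requires a careful choice of subregular $\sl_2$-triple in $\g_0$ adapted to the folding $\Gamma_0 \subseteq \Aut(\Delta)$, and one needs the Kazhdan cocharacters on the two slices to induce the same weighted $\C^\times$-action on $\C^2/\Gamma$ under the identifications of step one. Once this compatibility is in place, the Poisson rigidity step is essentially formal, being dictated by the one-dimensionality of the space of graded symplectic forms on $(\C^2/\Gamma)^{\reg}$ in each Kazhdan weight.
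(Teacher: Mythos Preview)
Your approach is essentially the paper's: invoke Slodowy to identify both nilpotent subregular slices with the same Kleinian singularity as $\C^\times$-varieties, argue that the two transported Poisson brackets on this surface are proportional by a rigidity argument on the two-dimensional smooth locus, and then rescale via the contracting $\C^\times$-action. The paper outsources the last two steps to \cite[Proposition~9.24 and Remark~6.19]{LPV} rather than spelling them out as you do; one small tightening of your rigidity step is to take the ratio of the two brackets directly (a degree-zero nowhere-vanishing function on the smooth locus, hence a nonzero constant by normality and positivity of the grading) instead of comparing each separately to $\partial_x\wedge\partial_y$, whose Kazhdan degree you have not established.
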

\begin{proof}
It follows from the proofs of \cite[Theorem~8.4 \& 8.7]{Slo} that $\Ss_{\chi, \N}$ and $\Ss_{\chi_0, \N_0}$ are both $\C^\times$-isomorphic to a simple surface singularity, say $\C[\Ss_{\chi, \N}] \simeq \C[x,y]^\Gamma \simeq \C[\Ss_{\chi_0, \N_0}]$ where $\Gamma$ is a finite subgroup of $\SL_2$. Let $\{\cdot, \cdot\}$ and $\{\cdot, \cdot\}_0$ denote the Poisson structures on $\C^2/ \Gamma$ transported from $\Ss_{\chi, \N}$ and $\Ss_{\chi_0, \N_0}$ respectively. Applying the argument in the final paragraph of the proof of \cite[Proposition~9.24]{LPV} we see that $\{\cdot, \cdot\} = c\{\cdot, \cdot\}_0$ for some $c\in \C^\times$. Now apply Remark~6.19 of {\it op. cit.} to complete the proof.
\end{proof}

 Let $\Gamma_0$ be the finite subgroup of $C$ defined in \cite[p. 143]{Slo}. It is isomorphic to the group $\Aut(\Delta)$ of Dynkin diagram automorphism of $\g$ for all pairs $(\g,\g_0)$  except for $({\sf D}_4, {\sf C}_3)$ in which case $\Gamma_0$  is isomorphic to a subgroup of order $2$.  In all cases the composition $\Gamma_0\into C\into \Aut(\g) \to \Aut(\Delta)$ is injective and its image is the subgroup of $\Aut(\Delta)$ realising the Dynkin diagram $\Delta_0$ of $\g_0$ as a folding of $\Delta$.

By \cite[\S 8.7, Remark 3]{Slo} there is a morphism of deformations of the algebraic variety $\Ss_{\chi_0,\N_0}\simeq \Ss_{\chi,\N}$
 illustrated by the following diagram, where the vertical arrows are the adjoint quotient maps:
\begin{eqnarray}
\label{e:slodowy}
\begin{tikzcd}[column sep=1.5em, row sep = 1.5em]
\Ss_{\chi_0} \arrow{r}{i} \arrow{d}{\delta_0} & \Ss_{\chi}\arrow{d}{\delta} \\
\h^*_0/W_0 \arrow{r}{j} & \h^*/W.
\end{tikzcd}
\end{eqnarray} 
Observe that $\Gamma_0$ acts on $\Ss_{\chi}$ and on $\h^*/W$ and that the map $\delta$ is  $\Gamma_0$-equivariant.
By \cite[\S 8.8, Remark 4]{Slo}  the maps $(i, j)$ induce  isomorphisms of varieties
\begin{eqnarray}
\label{eq:invariants}
\h^*_0/W_0\simeq(\h^*/W)^{\Gamma_0},\quad\quad\Ss_{\chi_0}\simeq\Ss_{\chi}\times_{\h^*/W}\h^*_0/W_0
\end{eqnarray}
 where $(\h^*/W)^{\Gamma_0} \subseteq \h^*/W$ is the subscheme of $\Gamma_0$-fixed points.
 
\begin{Example}
\label{E:ABexample}
Assume $\g_0$ is of type ${\sf B}_n$, and $\g$ is of type ${\sf A}_{2n-1}$. Then $\Gamma_0$ is a cyclic group of order 2 and $\C[\h^*]=\C[x_1,\,\ldots,\,x_{2n}]/(x_1+\cdots+x_{2n})$. The only non-trivial element $\gamma$ in $\Gamma_0$ maps $x_i$ to $-x_{2n+1-i}$. Furthermore
$$\C[\h^*/W]=\left(\C[x_1,\,\ldots,\,x_{2n}]/(x_1+\cdots+x_{2n})\right)^{{\mathfrak S}_{2n}} = \C[e_2, e_3,\,\ldots,\,e_{2n}]$$ where $e_j$ is the $j$th elementary symmetric polynomial. Thus $e_j\cdot \gamma=e_j$ for $j$ even and $e_j \cdot \gamma=-e_j$ for $j$ odd and the kernel of the natural projection $\C[\h^*/W] \to \C[(\h^*/ W)^{\Gamma_0}]$ is generated by all $e_{2r+1}$ for $r=1,\,\ldots,\,n-1$. 
\end{Example}



For each piece of notation at the start of Section~\ref{S:nilpotentslodowy} we introduce the same notation for $\g_0$. For example, $\h_0 \subseteq \g_0$ is a choice of maximal toral subalgebra and $W_0$ is the corresponding Weyl group.
Applying the remarks of Section~\ref{ss:universaldeformationoftheslice} we see that we may fix graded isomorphisms
\begin{eqnarray*}
\iota_0 &\colon &\C[\Ss_{\chi_0}] \otimes_{\C[\h_0^*/{W_0}]} \C_+ \to \C[\Ss_{\chi_0, \N_0}];\\
\iota& \colon &\C[\Ss_\chi] \otimes_{\C[\h^*/{W}]} \C_+ \to \C[\Ss_{\chi, \N}].
\end{eqnarray*}
such that $\iota$ is $\Gamma_0$-equivariant.
Since the reductive group $\Gamma_0$ acts on $\C[\Ss_{\chi, \N}]$ by graded Poisson automorphisms we can consider  universal $\Gamma_0$-deformations of $\Ss_{\chi, \N}$. 
 After fixing an isomorphism as in Lemma \ref{L:subregularPoissoniso}, by an abuse of notation we will identify the graded Poisson algebra $\C[\Ss_{\chi_0, \N_0}]$ with $\C[\Ss_{\chi, \N}]$ and view $\iota_0$ as an isomorphism $\C[\Ss_{\chi_0}] \otimes_{\C[\h_0^*/{W_0}]} \C_+ \to \C[\Ss_{\chi, \N}]$.

Before we proceed to the main result of this section, we prove an auxiliary  Lemma containing some general observations regarding graded homomorphisms.

\begin{Lemma}\label{L:auxiliary}Let $V = \bigoplus_{i=1}^n V_i$ and $U = \bigoplus_{i=1}^n U_i$ be finite dimensional positively graded vector spaces, with $V_i$ and $U_i$ in degree $i$,  with possibly $V_i = 0$ or $U_i = 0$ for some $1\le i \le n$.  Let $\tau : S(V) \to S(U)$ be a graded algebra homomorphism between the respective symmetric algebras,  with  gradings $S(V) = \bigoplus_{i\ge 0} S(V)_i, S(U) = \bigoplus_{i\ge 0} S(U)_i$ induced by the gradings on $V$ and $U$,  respectively.
For $i>0$, let $\tau_i = \tau|_{V_i}$ and $d_0 \tau_i : V_i \to U_i$ be the composition of $\tau_i$ with the projection on $U_i$ along $S(U)_i^{>1} = S(U)_i \cap S(U)^{>1}$. Then 
$\tau$ is surjective if and only if its linear term $d_0 \tau= \bigoplus_{i=1}^n d_0 \tau_i : V \to U$ is surjective. 
\end{Lemma}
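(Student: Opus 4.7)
The plan is to prove the two implications separately, using the fact that, as a graded algebra homomorphism, $\tau$ sends the ideal $S(V)^{>1} := \bigoplus_{k\ge 2} S^k(V)$ into $S(U)^{>1}$ (since $S(U)^{>1} = (S(U)^{>0})^2$), and therefore it sends $S(V)_i^{>1}$ into $S(U)_i^{>1}$ for every $i$. Combined with the decomposition $S(U)_i = U_i \oplus S(U)_i^{>1}$ coming from the definition of $d_0\tau_i$, this will let both directions fall out.

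For the forward implication ($\tau$ surjective $\Rightarrow d_0\tau$ surjective), I would take $u \in U_i$ and, using graded surjectivity, pick $x \in S(V)_i$ with $\tau(x) = u$. Writing $x = x_1 + x_{>1}$ with $x_1 \in V_i$ and $x_{>1} \in S(V)_i^{>1}$, the observation above gives $\tau(x_{>1}) \in S(U)_i^{>1}$, so projecting $\tau(x) = u$ onto $U_i$ along $S(U)_i^{>1}$ yields $d_0\tau_i(x_1) = u$.

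For the reverse implication I would argue by induction on $i$ that $U_i \subseteq \tau(S(V))$, which suffices because $S(U)$ is generated as an algebra by $U = \bigoplus_{i=1}^n U_i$. The base case $i=1$ is automatic: $S(U)_1^{>1} = 0$ (no product of two or more positive-degree factors can have total degree $1$), so $d_0\tau_1 = \tau_1$, which is surjective by hypothesis. For the inductive step, given $u \in U_i$, surjectivity of $d_0\tau_i$ produces $v \in V_i$ with $\tau(v) = u + r$, where $r \in S(U)_i^{>1}$. The key structural observation is that any element of $S(U)_i^{>1}$ is a linear combination of monomials $u_1 \cdots u_k$ with $k \ge 2$, each $u_j \in U_{m_j}$, $m_j \ge 1$, and $\sum_j m_j = i$; the condition $k \ge 2$ forces $m_j < i$ for every $j$. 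By the inductive hypothesis each such $u_j$ lies in $\tau(S(V))$, hence so does $r$, and therefore $u = \tau(v) - r \in \tau(S(V))$.

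The argument is essentially formal once one notes the decomposition $S(U)_i^{>1} \subseteq \sum_{m<i} U_m \cdot S(U)_{i-m}$; the only (mild) subtlety is being careful that a graded algebra morphism need not preserve the symmetric-power grading on $S(V)$ and $S(U)$ separately, but it does preserve the ideal $S(-)^{>1}$, which is all that is needed.
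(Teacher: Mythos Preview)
Your proof is correct and follows essentially the same approach as the paper's: both directions hinge on the observation that $\tau(S(V)^{>1}) \subseteq S(U)^{>1}$, with the forward implication obtained by decomposing a graded preimage and projecting onto $U_i$, and the reverse implication by induction on the degree $i$ to show $U_i \subseteq \tau(S(V))$. Your write-up is in fact slightly more explicit than the paper's, which merely says ``an inductive argument shows that $\btau_i(v)$ lies in the image of $\tau$'' without spelling out that the factors appearing in $S(U)_i^{>1}$ all have strictly smaller degree.
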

\begin{proof}
Observe that $\tau_i = d_0 \tau_i + \btau_i : V_i \to S(U)_i$ where $\btau_i : V_i \to S(U)_i^{>1}$ is the composition of $\tau_i$ with the projection on $S(U)_i^{>1} $ along $U_i$.
Suppose $\tau$ surjects, so for $u \in U_i$ there exists $f\in S(V)_i$ such that $\tau(f) = u$. Since $\tau(S(V)^{>1})\subset S(U)^{>1}$ we see that, if $f' = f'_0 + f'_1 + f'_{>1} \in \C \oplus V \oplus S(V)^{>1}$, then $\tau(f') = \tau(f'_1) = u$. It follows that $d_0\tau(f') = u$, hence $d_0 \tau$ surjects. Now suppose that $d_0\tau$ surjects and that $u \in U_i$ with $d_0\tau (v) = u$. Then $\tau_i(v) = u + \btau_i(v)$ and an inductive argument shows that $\btau_i(v)$ lies in the image of $\tau$. Hence $U$ lies in the image, which proves that $\tau$ is surjective.
\end{proof}

\begin{Lemma}\label{L:existenceofphi}
Let $A = \C[\Ss_{\chi,\N}]$. Then,  there is a unique graded algebra morphism $\phi \colon \C[\h^*/{W}] \to \C[\h^*_0/{W_0}]$
such  that $\PD_A(\phi)(\C[\Ss_\chi], \iota) = (\C[\Ss_{\chi_0}],  \iota_0)$ and it is surjective.
\end{Lemma}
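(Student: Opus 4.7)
The plan is to take $\phi$ to be the natural dual map $j^*:\C[\h^*/W]\to\C[\h_0^*/W_0]$ arising from Slodowy's morphism $j$ in the diagram \eqref{e:slodowy}. Uniqueness will be handled by invoking representability, and the fiber product description in \eqref{eq:invariants} will handle both the existence of the equation and the surjectivity.

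First I would establish uniqueness and representability. Note that in each case in \eqref{e:choosingdynkintypes} the ambient $\g$ is simply-laced (type {\sf A}, {\sf D}, or {\sf E}), and Table 1 of Theorem~\ref{T:LSNtheorem} does not list the subregular orbit in any simply-laced Dynkin type. Hence by Theorem~\ref{T:LSNtheorem}, $\PD_A$ is representable over $\C[\h^*/W]$ and $(\C[\Ss_\chi],\iota)$ is a universal element. By the universal property, there exists at most one graded morphism $\phi\in\Hom_{\GrAlg}(\C[\h^*/W],\C[\h_0^*/W_0])$ with $\PD_A(\phi)(\C[\Ss_\chi],\iota)=(\C[\Ss_{\chi_0}],\iota_0)$, so uniqueness is immediate.

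For existence, I would take $\phi:=j^*$, the morphism dual to $j$ in \eqref{e:slodowy}. This is a graded algebra homomorphism because the Slodowy construction is $\C^\times$-equivariant. The second isomorphism in \eqref{eq:invariants} provides a $\C[\h_0^*/W_0]$-linear graded Poisson isomorphism
\[
\C[\Ss_\chi]\otimes_{\C[\h^*/W]}\C[\h_0^*/W_0]\isoto\C[\Ss_{\chi_0}],
\]
where the tensor product is taken via $j^*$. Reducing modulo the graded maximal ideal and using the cancellation \eqref{eq:cancel}, both central fibres are identified with $A$ via Kostant's theorem and Lemma~\ref{L:subregularPoissoniso}; up to the isoclass relation defining $\PD_A(\C[\h_0^*/W_0])$ this identification matches $\iota\circ\varepsilon$ with $\iota_0$. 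Hence $\PD_A(j^*)(\C[\Ss_\chi],\iota)=(\C[\Ss_{\chi_0}],\iota_0)$ as isoclasses, establishing existence with $\phi=j^*$.

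Finally, surjectivity of $\phi=j^*$ follows from the first isomorphism in \eqref{eq:invariants}: $j$ realises $\h_0^*/W_0$ as the closed subscheme $(\h^*/W)^{\Gamma_0}\subseteq\h^*/W$, and closed embeddings give surjective maps on coordinate rings. The main subtlety to watch is the compatibility of the $\iota$'s on the central fibre — but this is absorbed by working with isoclasses rather than strict equalities, so the potentially delicate step amounts to nothing more than chasing the natural identifications through the base-change diagram. Example~\ref{E:ABexample} provides a concrete illustration of the surjectivity in the case $\g_0$ of type {\sf B}$_n$, and the same pattern (projecting onto $\Gamma_0$-invariants of the chosen generators of $\C[\h^*/W]$) applies in all cases.
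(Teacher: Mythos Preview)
There is a genuine gap: you assert that Slodowy's fibre product isomorphism $\C[\Ss_\chi]\otimes_{\C[\h^*/W]}\C[\h_0^*/W_0]\isoto\C[\Ss_{\chi_0}]$ is a \emph{Poisson} isomorphism, but the paper only records it (following \cite[\S 8.8, Remark~4]{Slo}) as an isomorphism of varieties. Slodowy's construction of $(i,j)$ in \eqref{e:slodowy} predates the Poisson structure on the slice and is carried out purely in the category of $\C^\times$-deformations of the singularity; nothing in that construction forces $i^*$ to intertwine the Hamiltonian-reduction brackets on the two slices. Even at the level of central fibres, Lemma~\ref{L:subregularPoissoniso} produces a Poisson isomorphism $\Ss_{\chi,\N}\simeq\Ss_{\chi_0,\N_0}$ by an abstract rigidity argument (the bracket on a Kleinian singularity is unique up to scalar), not by showing that Slodowy's map $i$ restricts to a Poisson map. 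Hence you have not shown that $\PD_A(j^*)(\C[\Ss_\chi],\iota)=(\C[\Ss_{\chi_0}],\iota_0)$ in $\PD_A(\C[\h_0^*/W_0])$, and consequently there is no justification for the identity $\phi=j^*$.

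The paper circumvents this precisely: it obtains $\phi$ from the universal property of $\PD_A$ (so $\phi$ is automatically compatible with the Poisson data), and then only compares $\phi$ with $j^*$ at the level of differentials at $0$. The point is that $\C[\Ss_\chi]\to\h^*/W$ is $\C^\times$-semi-universal as a \emph{flat} (non-Poisson) deformation of the surface singularity; forgetting the Poisson structure, both $\phi$ and $j^*$ classify the same underlying flat deformation $\Ss_{\chi_0}\to\h_0^*/W_0$, so semi-universality forces $d_0\phi=d_0 j^*$. Since $j$ is a closed immersion, $d_0 j^*$ is surjective, and the elementary Lemma~\ref{L:auxiliary} then upgrades this to surjectivity of $\phi$. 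Your surjectivity argument is cleaner \emph{if} $\phi=j^*$, but establishing that equality would require an independent proof that $i$ is Poisson --- which is exactly what the paper's approach avoids.
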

\begin{proof}
 By Theorem~\ref{T:LSNtheorem},
 $u := (\C[\Ss_{\chi}], \iota) \in \PD_A(\C[\h^*/{W}])$ is a universal Poisson deformation of $A$.  By Lemmas \ref{L:Walgebragraded} and \ref{L:subregularPoissoniso}  $(\C[\Ss_{\chi_0}], \iota_0)$ is a Poisson deformation of $A$ over $\C[\h_0^*/{W_0}]$, so the universal property for $u$ gives the existence of $\phi$.  By Lemma \ref{L:auxiliary} from which we retain notiation,  surjectivity of $\phi$ follows from surjectivity of $d_0\phi$.   
 By $\C^\times$-semi-universality of $\C[\Ss_{\chi}]$ (see Section 2.5 and Theorem 8.7 of \cite{Slo}) the differentials at zero are equal for the morphism $j$ of \eqref{e:slodowy} and the morphism $\h_0^*/W_0 \to \h^*/W$ whose pull-back is $\phi$. 
  Algebraically this means precisely that $d_0 \phi=d_0 j^*$. 
   The latter is surjective by Lemma \ref{L:auxiliary} because $j$ is a closed inclusion of affine varieties.
\end{proof}

\begin{Theorem}
\label{T:subregularuniversal_specialcase}
Let $\g_0$ be of type {\sf B}$_n$, {\sf C}$_n$ or {\sf F}$_4$, where $n \ge 2$ and $n$ is even in type {\sf C}.  Let $A = \C[\Ss_{\chi,\N}]$.  Then
\begin{enumerate}[label=(\arabic*)]
\item $\PD_{A, \Gamma_0}$ is represented by $\C[\h^*_0/W_0]$ and
$(\C[\Ss_{\chi_0}], \iota_0)$ is a universal element;
\item $\Qnt_{A, \Gamma_0}$ is represented by  $Z(\g_0, e_0)$ and $(U(\g_0, e_0), \iota_0)$ is a universal element.
\end{enumerate}
\end{Theorem}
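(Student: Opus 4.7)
The plan is to reduce the theorem to the $\Gamma_0$-equivariant machinery of Section~2, using Lemma~\ref{L:gammainterplay}(1) to deduce the quantization statement (2) from the Poisson statement (1) once the correct universal bases have been identified with $\C[\h_0^*/W_0]$ and $Z(\g_0,e_0)$ respectively.

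For part (1), I would first exhibit $(\C[\Ss_{\chi_0}], \iota_0)$ as a $\Gamma_0$-stable graded Poisson deformation of $A$ over $\C[\h_0^*/W_0]$. Via \eqref{eq:invariants}, $\C[\Ss_{\chi_0}] \simeq \C[\Ss_\chi] \otimes_{\C[\h^*/W]} \C[\h_0^*/W_0]$ as $\C[\h_0^*/W_0]$-algebras, and since $\Gamma_0$ acts on $\C[\Ss_\chi]$ by graded Poisson automorphisms (Lemma~\ref{L:Gamma0Poisson}) fixing the subalgebra $\C[(\h^*/W)^{\Gamma_0}] \simeq \C[\h_0^*/W_0]$ pointwise, this tensor product carries a $\C[\h_0^*/W_0]$-linear graded Poisson action of $\Gamma_0$; selecting the identification of Lemma~\ref{L:subregularPoissoniso} so that $\iota_0$ intertwines this action with the $\Gamma_0$-action on $A$ yields a $\Gamma_0$-structure on $(\C[\Ss_{\chi_0}], \iota_0)$. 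Next, since $\g$ is simply-laced and $e$ is subregular (hence outside Table~1), Theorem~\ref{T:LSNtheorem} combined with Proposition~\ref{P:dforgetful} shows $\PD_{A,\Gamma_0}$ is represented by the coinvariant algebra $(\C[\h^*/W])_{\Gamma_0}$. Because $\Gamma_0$ acts on the polynomial ring $\C[\h^*/W]$ through a linear action on a graded space of generators, the ideal generated by $\{r - r\gamma\}$ is already generated by its linear components, so the coinvariants equal the coordinate ring of the fixed-point scheme, which is $\C[\h_0^*/W_0]$ by \eqref{eq:invariants}. Under this identification the surjection $\phi$ of Lemma~\ref{L:existenceofphi} factors through $\alpha_{\Gamma_0}$ (its target carries the trivial $\Gamma_0$-action), and a Hilbert-series comparison forces the factored map to be an isomorphism, so $(\C[\Ss_{\chi_0}], \iota_0) \simeq \PD_A(\alpha_{\Gamma_0})(\C[\Ss_\chi], \iota)$ is a universal element of $\PD_{A, \Gamma_0}$.

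For part (2), Theorem~\ref{T:Walgfilteredquant} gives that $\Qnt_A$ is represented by $Z(\g,e) \simeq \C[\h^*/W_\bullet]$ with universal element $(U(\g,e), \iota)$, and Lemma~\ref{L:gammainterplay}(2) then asserts that $A$ admits an O$\Gamma_0$QT with universal base $(Z(\g,e))_{\Gamma_0}$ and universal element $\widetilde\Q := U(\g,e) \otimes_{Z(\g,e)} (Z(\g,e))_{\Gamma_0}$. Both $(Z(\g,e))_{\Gamma_0}$ and $Z(\g_0,e_0) \simeq \C[\h_0^*/W_{0,\bullet}]$ are split filtered algebras with associated graded $\C[\h_0^*/W_0]$ (the former by part~(1) applied to $\gr Z(\g,e) \simeq \C[\h^*/W]$, the latter by Harish--Chandra), so they are isomorphic as split filtered algebras; fixing an iso $\theta \colon Z(\g_0,e_0) \to (Z(\g,e))_{\Gamma_0}$ and setting $\widetilde\Q' := \Qnt_{A,\Gamma_0}(\theta^{-1})(\widetilde\Q)$ produces a universal element in $\Qnt_{A,\Gamma_0}(Z(\g_0,e_0))$. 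By Lemma~\ref{L:Walgebragraded} applied to $\g_0$, $(U(\g_0,e_0), \iota_0)$ is a filtered quantization of $A$ over $Z(\g_0,e_0)$ with associated graded $(\C[\Ss_{\chi_0}], \iota_0)$, which is the universal $\Gamma_0$-Poisson deformation by part~(1). To conclude, I would lift the classical $\Gamma_0$-action on $\C[\Ss_{\chi_0}]$ to filtered $Z(\g_0, e_0)$-linear automorphisms of $U(\g_0, e_0)$ by transferring the equivariant structure from $\widetilde\Q'$ across the unipotent base-change correction supplied by Lemma~\ref{L:interplay2}, thereby placing $U(\g_0, e_0)$ in $\Qnt_{A, \Gamma_0}(Z(\g_0, e_0))$; universality then follows immediately from Lemma~\ref{L:gammainterplay}(1) combined with part (1).

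The main obstacle is the coherent choice of the several noncanonical identifications $\C[\Ss_{\chi_0, \N_0}] \simeq \C[\Ss_{\chi, \N}]$, $(\C[\h^*/W])_{\Gamma_0} \simeq \C[\h_0^*/W_0]$ and $(Z(\g,e))_{\Gamma_0} \simeq Z(\g_0, e_0)$, and the resulting comparison of $\widetilde\Q'$ with $U(\g_0, e_0)$: the $\Gamma_0$-structure transported across $\iota_0$ must be matched with the intrinsic structure on the $\g_0$-side, and the unipotent correction arising from the non-uniqueness of the quantization must be verified to be $\Gamma_0$-equivariant so that $\Gamma_0$-stability descends to $U(\g_0, e_0)$. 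Modulo these verifications the conclusion is a formal application of the representability machinery of Section~2.
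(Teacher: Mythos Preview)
Your overall architecture matches the paper's: reduce to Proposition~\ref{P:dforgetful}, identify the coinvariant base $(\C[\h^*/W])_{\Gamma_0}$ with $\C[\h^*_0/W_0]$, and then pass to quantizations. However, your part~(1) has a genuine gap precisely at the point where the Dynkin type hypotheses should enter, and your part~(2) is far more involved than necessary.

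\textbf{The gap in part (1).} You assert that ``$\Gamma_0$ acts on the polynomial ring $\C[\h^*/W]$ through a linear action on a graded space of generators'' and deduce that the coinvariant algebra coincides with the coordinate ring of the fixed-point scheme, which you then identify with $\C[\h^*_0/W_0]$ via \eqref{eq:invariants}. There are two problems here. First, the $\Gamma_0$-action entering Proposition~\ref{P:dforgetful} is the abstract action $(\alpha_g)$ of Remark~\ref{R:univisequiv}, defined only by the universal property; you silently identify it with Slodowy's geometric action coming from $\Gamma_0\subseteq\Aut(\g)$ (which is what \eqref{eq:invariants} concerns), and that identification requires an argument. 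Second, and more seriously, you never use the restriction to types $\mathsf{B}_n$, $\mathsf{C}_{2m}$, $\mathsf{F}_4$. Your ``Hilbert-series comparison'' would prove the theorem in type $\mathsf{G}_2$ or $\mathsf{C}_{2m+1}$ as well, which the authors explicitly leave as a conjecture. The paper's proof is different at exactly this point: rather than analysing the action, it shows that under the stated type restrictions the Kazhdan degrees of the fundamental invariants of $\C[\h^*/W]$ split into two disjoint congruence classes mod~$4$, those in $\Lambda_0$ matching exactly the degrees of generators of $\C[\h^*_0/W_0]$ and those in $\Lambda_2$ necessarily lying in the kernel of any graded surjection to $\C[\h^*_0/W_0]$. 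Since both $\phi$ (Lemma~\ref{L:existenceofphi}) and $\alpha_{\Gamma_0}$ are graded surjections onto algebras with the same Hilbert series concentrated in degrees $\equiv 0 \bmod 4$, both kernels are forced to equal $(e_i:i\in\Lambda_2)$. This degree argument is the place where the type restriction is actually consumed, and it bypasses any need to understand the action explicitly.

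\textbf{Part (2).} The paper dispatches this in one sentence: once (1) is known, Lemma~\ref{L:Walgebragraded}(3) gives $\gr(U(\g_0,e_0),\iota_0)=(\C[\Ss_{\chi_0}],\iota_0)$, which is a universal element of $\PD_{A,\Gamma_0}$, so Theorem~\ref{T:firstmain}(2) (equivalently Lemma~\ref{L:gammainterplay}(1)) yields universality of $(U(\g_0,e_0),\iota_0)$ in $\Qnt_{A,\Gamma_0}$. You are right that this presupposes $(U(\g_0,e_0),\iota_0)\in\Qnt_{A,\Gamma_0}(Z(\g_0,e_0))$, i.e.\ $\Gamma_0$-stability of the quantization; the paper is tacit here, but this follows by the same mechanism as in the Poisson case once one knows the associated graded deformation is $\Gamma_0$-stable and uses the surjectivity of the map $p\circ\gr$ in Lemma~\ref{L:semidirect_q}. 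Your route through explicit base identifications $(Z(\g,e))_{\Gamma_0}\simeq Z(\g_0,e_0)$, transport of structure, and unipotent correction via Lemma~\ref{L:interplay2} is not wrong in spirit, but it re-proves what Lemma~\ref{L:gammainterplay}(1) already packages, and the ``$\Gamma_0$-equivariance of the unipotent correction'' you flag as an obstacle does not in fact arise if you argue directly.
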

\begin{proof}
We prove (1); then  (2) follows from Theorem \ref{T:firstmain} (2) and Theorem \ref{T:Walgfilteredquant}.

Let $\alpha_{\Gamma_0} \in \Hom_{\GrAlg}(\C[\h^*/{W}], (\C[\h^*/{W}])_{\Gamma_0})$ be the quotient map to the coinvariant algebra and let $\phi$ be the  morphism in Lemma \ref{L:existenceofphi}.  By  Proposition~\ref{P:dforgetful}, $u^{\Gamma_0}:=\PD_A(\alpha_{\Gamma_0})(u) $ is 
a universal $\Gamma_0$-deformation of $A$,  so we first compare the surjective graded morphisms $\alpha _{\Gamma_0}$ and $\phi$. 
We claim that $\ker(\phi) = \ker(\alpha_{\Gamma_0})$.
The kernel of $\alpha_{\Gamma_0}$ is  generated by $f - f\cdot \gamma$ where $f\in \C[\h^*/W]$ and $\gamma \in \Gamma_0$, however we will obtain a different description of the kernel.  
If $r$ denotes the rank of $\g$ then we write $(d_i)_{i=1}^r$ 
for the Kazhdan graded degrees of the elementary homogeneous generators $e_1,...,e_r$ of $\C[\h^*/W]$.
These degrees are listed in \cite[p. 112]{Slo}, and they coincide with the total degrees doubled, viewed as polynomials on $\h^*$.
Let $\Lambda_0, \Lambda_2 \subseteq \{1,...,r\}$ be the two complementary sets consisting of indexes $i$ such that $d_i$ is congruent to $0$ or $2 \mathrm{ mod } 4$, respectively.
Thanks to our restrictions on the Dynkin label of $\g_0$ the set $\{d_i \mid i \in \Lambda_0\}$ coincides with the collection of all degrees of homogeneous generators of $\C[\h^*_0]^{W_0}$, whilst  $\dim \h_0 = |\Lambda_0|$.
Therefore the Kazhdan grading on $\C[\h_0^*/W_0]$ has degree concentrated in $4\Z$.
Since $\phi$ is graded, the generators of degree $d_i$ with $i \in \Lambda_2$ are mapped to zero.
Since $\phi$ is surjective, the generators with degrees $d_i$ where $i\in \Lambda_0$ are sent to algebraically independent elements.
It follows that $\ker \phi  = (e_i \mid i \in \Lambda_2)$.

It is explained in \cite[\textsection 13]{Ca} (see also \cite[Remark 8.8.4]{Slo}) that $\C[\h^*/W]_{\Gamma_0} \simeq \C[\h_0^*/W_0]$ as algebras graded by total degree and, equivalently, by Kazhdan degree.
Since $\alpha_{\Gamma_0}$ is a surjection we can apply the argument of the previous paragraph replacing $\phi$ with $\alpha_{\Gamma_0}$ to deduce that $\ker \alpha_{\Gamma_0} = (e_i \mid i \in \Lambda_2)$.

Equality of the kernels gives the existence of a graded isomorphism $\sigma \colon \C[\h_0^*/W_0] \isoto \C[\h^*/W]_{\Gamma_0}$ by setting $\sigma(\phi(f)) := \alpha_{\Gamma_0}(f)$ for $f\in \C[\h^*/W]$.
We have a commutative triangle of graded homomorphisms:
\begin{eqnarray}
\label{e:littletriangle}
\begin{tikzcd}
 & \C[\h^*/W] \arrow{dl}[swap]{\phi} \arrow{dr}{\alpha_{\Gamma_0}} \\
\C[\h_0^*/W_0] \arrow{rr}{\sigma}  && \C[\h^*/W]_{\Gamma_0}.
\end{tikzcd}
\end{eqnarray}
The isomorphism $\sigma$ is $\Gamma_0$ invariant and it satisfies
$\PD_{A, \Gamma_0}(\sigma)\PD_A(\phi)u = u^{\Gamma_0}$.
This proves that $\C[\h_0^*/W_0]$ is  another choice for a universal base of $\PD_{A, \Gamma_0}$  and that $\PD_A(\phi)u$ is a universal element of $\PD_{A, \Gamma_0}$ over this base.
\end{proof}

We conjecture that Theorem~\ref{T:subregularuniversal_specialcase} holds in general, without the restrictions on Dynkin type. We have the following consequence. 

\begin{Corollary}\label{C:subregularuniversal_specialcase}
There is a surjective homomorphism $U(\g,e) \twoheadrightarrow U(\g_0, e_0)$.
Under the assumptions of Theorem~\ref{T:subregularuniversal_specialcase} the kernel is generated by $\{ z - z \cdot \gamma \mid \gamma\in \Gamma_0, z\in Z(\g,e)\}$. 
\end{Corollary}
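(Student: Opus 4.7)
The plan is to deduce the corollary from the universal properties established in Theorems~\ref{T:Walgfilteredquant} and \ref{T:subregularuniversal_specialcase}. Since $\g$ is simply-laced and $e$ is subregular, the pair $(\g, e)$ does not appear in the exceptional table, so $U(\g, e)$ is a universal element of $\Qnt_A$ over $Z(\g, e)$, where $A = \C[\Ss_{\chi, \N}]$. By Lemma~\ref{L:subregularPoissoniso} we identify $\C[\Ss_{\chi_0, \N_0}]$ with $A$ as graded Poisson algebras, so $(U(\g_0, e_0), \iota_0) \in \Qnt_A(Z(\g_0, e_0))$. The universal property then yields a unique strict morphism $\psi \colon Z(\g, e) \to Z(\g_0, e_0)$ in $\SFAlg$ together with a filtered algebra isomorphism $\Phi \colon U(\g, e) \otimes_{Z(\g, e)} Z(\g_0, e_0) \isoto U(\g_0, e_0)$.

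To obtain the surjection, I would show that $\psi$ itself is surjective. By the compatibility clause of the OQT (Theorem~\ref{T:conicsymphaveinitials} and Definition~\ref{D:oc}), $\gr \psi$ is the unique graded morphism sending $(\C[\Ss_\chi], \iota)$ to $(\C[\Ss_{\chi_0}], \iota_0)$, which is precisely the map $\phi$ of Lemma~\ref{L:existenceofphi}. Since $\phi$ is surjective and every morphism in $\SFAlg$ is strictly filtered, $\psi$ is surjective. The composition $U(\g, e) \to U(\g, e) \otimes_{Z(\g, e)} Z(\g_0, e_0) \xrightarrow{\Phi} U(\g_0, e_0)$ is then the desired surjection, and its kernel is the two-sided ideal $U(\g, e) \cdot \ker \psi$, which coincides with the left ideal because $Z(\g, e)$ lies in the centre of $U(\g, e)$.

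For the description of the kernel in the special case, I would compare $\psi$ to the coinvariant quotient $\alpha_{\Gamma_0} \colon Z(\g, e) \to Z(\g, e)_{\Gamma_0}$, mirroring the argument at the end of the proof of Theorem~\ref{T:subregularuniversal_specialcase}. By Lemma~\ref{L:gammainterplay}(2) combined with Proposition~\ref{P:qforgetful}, $Z(\g, e)_{\Gamma_0}$ is a universal base for $\Qnt_{A, \Gamma_0}$; under the hypotheses of Theorem~\ref{T:subregularuniversal_specialcase}, so is $Z(\g_0, e_0)$. The ensuing canonical graded isomorphism $\sigma \colon Z(\g_0, e_0) \isoto Z(\g, e)_{\Gamma_0}$ should fit into the commutative triangle
\[
\begin{tikzcd}
 & Z(\g, e) \arrow{dl}[swap]{\psi} \arrow{dr}{\alpha_{\Gamma_0}} \\
Z(\g_0, e_0) \arrow{rr}{\sigma} && Z(\g, e)_{\Gamma_0}
\end{tikzcd}
\]
so that $\ker \psi = \ker \alpha_{\Gamma_0}$, which by construction is generated by $\{ z - z \cdot \gamma \mid z \in Z(\g, e),\ \gamma \in \Gamma_0 \}$; the conclusion for $\ker(U(\g, e) \twoheadrightarrow U(\g_0, e_0))$ then follows from the description of the kernel in the preceding paragraph.

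The main technical obstacle is verifying the identity $\gr \psi = \phi$ and the commutativity of the triangle above at the quantum level; both should follow from uniqueness in the relevant universal properties, together with the compatibility diagrams of the OQT and the O$\Gamma_0$QT provided by Lemma~\ref{L:gammainterplay} and Remark~\ref{R:ocgamma}.
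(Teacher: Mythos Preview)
Your proposal is correct and follows essentially the same route as the paper. Both arguments obtain the base-change morphism $\psi$ (the paper's $\beta$) from the universal property of $(U(\g,e),\iota)$, deduce its surjectivity from $\gr\psi=\phi$ and Lemma~\ref{L:existenceofphi}, and read off the kernel from the comparison with the coinvariant quotient carried out in the proof of Theorem~\ref{T:subregularuniversal_specialcase}; your treatment of the kernel step via the quantum triangle is just a slightly more explicit version of the paper's one-line reference to that proof.
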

\begin{proof} 
Retain the notation $A = \C[\Ss_{\chi,\N}]$.
Let $q_0 := (U(\g_0,e_0),  \iota_0) \in \Qnt_A (Z(\g_0, e_0))$.
By Theorem~\ref{T:Walgfilteredquant} we see that $q := (U(\g,e),\iota) \in \Qnt_A( Z(\g, e))$ is a universal filtered quantization and $\gr q$ a universal Poisson deformation of $A$, 
so there is a unique morphism $\beta \colon Z(\g, e) \to Z(\g_0, e_0)$ in $\SFAlg$ such that $\Qnt_A(\beta)(q) = q_0$.
In particular, there exists a filtered $Z(\g_0, e_0)$-linear isomorphism $U(\g,e) \otimes_{Z(\g, e)} Z(\g_0, e_0) \to U(\g_0,e_0)$.
 Moroever, 
$\PD_A(\gr \beta)(\gr q) = \gr q_0$, by Theorem \ref{T:firstmain}.
Lemma \ref{L:existenceofphi} gives surjectivity of  $\gr \beta$, whence of  $\beta$.
The sought map is then the composition of morphisms:
$$U(\g,e) \to U(\g,e) \otimes_{Z(\g, e)} Z(\g_0, e_0) \to U(\g_0,e_0),$$
where the first arrow is the map $x\mapsto x \otimes 1$ for all $x \in U(\g,e)$.
It is surjective by surjectivity of $\beta$.
The statement regarding the kernel follows directly from the proof of Theorem \ref{T:subregularuniversal_specialcase}.
\end{proof}

\subsection{A presentation for the subregular $W$-algebra of type {\sf B}}

In this section we let $G_0 = \SO_{2n+1}$ and $\g_0 = \Lie(G_0)$. Let $e_0 \in \g_0$ be a subregular nilpotent element of $\g_0$ and $\chi_0 \in \g_0^*$ the corresponding element with respect to the Killing identification. Our purpose here is to give a presentation of the finite $W$-algebra $U(\g_0, e_0)$ as a quotient of a shifted Yangian.

By Corollary~\ref{C:subregularuniversal_specialcase} we can express $U(\so_{2n+1}, e)$ as a quotient of $U(\sl_{2n}, e)$, whilst \cite{BKshift} allows us to express $U(\gl_{2n}, e)$ as a truncated shifted Yangian. In order to tie these threads together we record the following observation which follows straight from the definitions.
\begin{Lemma}
\label{L:glnvsln}
The centre of $\gl_n$ maps to a $1$-dimensional central subspace of $U(\gl_n, e)$ and the quotient by that subspace is isomorphic to $U(\sl_n, e)$. \hfill \qed
\end{Lemma}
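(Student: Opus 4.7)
The plan is to leverage the splitting $\gl_n = \sl_n \oplus Z(\gl_n)$, where $Z(\gl_n) = \C \cdot I_n$ is spanned by the identity matrix. Since the $\sl_2$-triple $(e,h,f)$ lies in $\sl_n$, the Kazhdan grading on $\gl_n$ satisfies $I_n \in \gl_n(0)$ (because $[h, I_n] = 0$), and for $i \neq 0$ one has $\gl_n(i) = \sl_n(i)$. In particular $\m_\ell, \n_\ell \subseteq \sl_n$, so the data entering the $W$-algebra construction involves only $\sl_n$. Moreover $I_n$ is central in $U(\gl_n)$, and the PBW theorem yields an algebra isomorphism
\[
U(\gl_n) \isoto U(\sl_n) \otimes \C[I_n].
\]

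Next I would use this to compute the left ideal $J_\chi^{\gl_n} := U(\gl_n) \m_{\ell, \chi}$ featuring in the definition of $U(\gl_n, e)$. Since $\m_{\ell, \chi} \subseteq U(\sl_n)$ and $I_n$ commutes with it, we obtain $J_\chi^{\gl_n} = J_\chi^{\sl_n} \otimes \C[I_n]$, and hence a corresponding decomposition of the Whittaker module $Q^{\gl_n} \simeq Q^{\sl_n} \otimes \C[I_n]$. As $\ad(\n_\ell)$ acts trivially on $\C[I_n]$ and the two tensor factors are $\ad(\n_\ell)$-stable, taking $\ad(\n_\ell)$-invariants commutes with the tensor product, yielding the key identification
\[
U(\gl_n, e) \simeq U(\sl_n, e) \otimes \C[I_n].
\]

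From this the lemma follows immediately. The image of $Z(\gl_n) = \C \cdot I_n$ under the canonical map $U(\gl_n) \to U(\gl_n, e)$ is the 1-dimensional central subspace $\C \cdot (1 \otimes I_n)$, and quotienting $U(\gl_n, e)$ by the two-sided ideal it generates, equivalently by setting $I_n = 0$, recovers $U(\sl_n, e)$. The one minor bookkeeping point is the choice of invariant form on $\gl_n$ used to identify $e$ with $\chi \in \gl_n^*$, since the Killing form is degenerate on $\gl_n$; taking the trace form is standard, and in any case only the restriction of $\chi$ to $\m_\ell \subseteq \sl_n$ enters the construction. There is essentially no serious obstacle: the argument is a direct consequence of the Whittaker-style definition of the finite $W$-algebra, requiring only that we separate the central direction $I_n$ from the semisimple part $\sl_n$.
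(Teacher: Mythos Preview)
Your argument is correct and is precisely the unpacking of ``follows straight from the definitions'' that the paper leaves implicit (the lemma carries only a $\qed$ in the original). There is no alternative approach to compare: you have simply supplied the details behind the paper's one-line justification.
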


In \cite{BKshift} the {\it shifted Yangian associated to $\gl_n$} is introduced in full generality, however in this paper we only require a special case: we define the shifted Yangian $Y_2(\sigma)$ to be the algebra with (infinitely many) generators
\begin{eqnarray}
\begin{array}{c}
\{D_1^{(r)}, D_2^{(r)} \mid r>0\} \cup \{E^{(r)} \mid r> 2n-2 \} \cup \{F^{(r)} \mid r> 0\}
\end{array}
\end{eqnarray}
and relations (2.4)-(2.9) from \cite{BKshift}. Our generators $E^{(r)}$ and $F^{(r)}$ are denoted $E_1^{(r)}$ and $F_1^{(r)}$ in {\it loc. cit.} and our definition above corresponds to the shift matrix $\sigma = (s_{i,j})_{1\leq i,j\leq 2}$ with $s_{1,2} = 2n-2$ and $s_{i,j} = 0$ otherwise. We gather the diagonal generators $D_i^{(r)}$ into power series by setting $D_i(u) := \sum_{r \ge 0} D_i^{(r)} u^{-r} \in Y_2(\sigma)[[u^{-1}]]$ where $D_i^{(0)} := 1$ and consider the series
\begin{eqnarray}
\label{e:Zdefined}
\begin{array}{rcl}
Z(u) &=& u^{2n} + \displaystyle \sum_{r > 0} Z^{(r)} u^{2n-r} \vspace{4pt} \\ 
&:=& u (u-1)^{2n-1} D_1(u) D_2(u-1) \in u^{2n} Y_2(\sigma)[[u^{-1}]].
\end{array}
\end{eqnarray}

\begin{Lemma}
The elements $\{Z^{(r)} \mid r> 0\}$ are algebraically independent generators of the centre of $Y_2(\sigma)$. Furthermore for $r = 1,..., 2n$ we have
\begin{eqnarray}
\label{e:centreYangian}
Z^{(r)} = \sum_{s=0}^r \binom{2n-1}{2n-1-s} (-1)^{2n-s} \sum_{t=0}^s D_1^{(t)} \cD_2{}^{(s-t)}
\end{eqnarray}
where $\cD_2{}^{(r)} := \sum_{s=0}^r \binom{r-1}{r-s} D_2^{(s)}$ and $\cD_2{}^{(-1)} := 0$.
\end{Lemma}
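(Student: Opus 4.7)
The plan is to prove the explicit formula and the centre--generation claim by essentially independent arguments.

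For the explicit formula, I would compute by direct power-series expansion of the defining product in \eqref{e:Zdefined}. The binomial theorem gives
\begin{equation*}
u(u-1)^{2n-1} = \sum_{s=0}^{2n-1} \binom{2n-1}{2n-1-s}(-1)^{2n-s}\, u^{2n-s},
\end{equation*}
while writing $(u-1)^{-r} = u^{-r}(1-u^{-1})^{-r}$ and applying the negative binomial series yields
\begin{equation*}
D_2(u-1) = \sum_{k \ge 0} \cD_2^{(k)}\, u^{-k},
\end{equation*}
which is precisely what motivates the definition of the auxiliary quantities $\cD_2^{(k)}$ in the statement. Multiplying these expansions with $D_1(u) = \sum_{t \ge 0} D_1^{(t)} u^{-t}$ and collecting the coefficient of $u^{2n-r}$ produces the claimed formula; the range $r = 1, \ldots, 2n$ reflects the fact that $u(u-1)^{2n-1}$ has $u$-degree $2n$.

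For the centre statements, I would invoke the theory of the ambient Yangian. The quantum determinant $\mathrm{qdet}(u) := D_1(u)D_2(u-1)$ freely generates the centre of the Yangian $Y(\gl_2)$ (cf.~Molev's monograph on Yangians and classical Lie algebras), so the coefficients of the $Y(\gl_2)$-valued series $Z(u)$ are central in $Y(\gl_2)$. The shifted Yangian $Y_2(\sigma)$ embeds as a subalgebra of $Y(\gl_2)$ by \cite[\S 2]{BKshift}, and the scalar factor $u(u-1)^{2n-1}$ is chosen precisely so that each coefficient $Z^{(r)}$ actually lies in this subalgebra; centrality in $Y_2(\sigma)$ then follows. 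For algebraic independence, compute the top component of $Z^{(r)}$ in the Kazhdan-type filtration from \cite{BKshift}: by the PBW theorem for the shifted Yangian, the leading term of $Z^{(r)}$ is a linear combination of $D_1^{(r)}$ and $\cD_2^{(r)}$ modulo polynomials in lower-degree $D_i^{(s)}$, and these classes are algebraically independent in the associated graded.

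The main obstacle is the generation statement: showing that the $Z^{(r)}$ exhaust the centre of $Y_2(\sigma)$, rather than merely lying in it. The embedding $Y_2(\sigma) \hookrightarrow Y(\gl_2)$ is not surjective on centres in general, so one cannot directly restrict the generation statement from the ambient Yangian. My plan here is to use the full PBW basis of $Y_2(\sigma)$ from \cite{BKshift} together with a filtered induction argument: for any central $z \in Y_2(\sigma)$, its top graded symbol must commute with the symbols of $E^{(r)}$ and $F^{(r)}$ in the Kazhdan associated graded, which forces it to be a polynomial in the symbols of the $D_i^{(r)}$; iterating subtraction of a matching polynomial in the $Z^{(r)}$ then strictly reduces the filtration degree of $z$, and the conclusion follows by induction.
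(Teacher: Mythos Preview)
Your derivation of the explicit formula \eqref{e:centreYangian} is exactly the paper's argument: expand $u(u-1)^{2n-1}$ by the binomial theorem, expand $D_2(u-1)$ via the negative binomial series to produce the $\cD_2^{(k)}$, multiply by $D_1(u)$, and read off the coefficient of $u^{2n-r}$.

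Where you diverge from the paper is in the centre claims. The paper does not attempt a self-contained proof: it simply invokes \cite[Theorem~2.6]{BKrep}, which already identifies the centre of the shifted Yangian, together with the observation that $u^{-2n+1}(u-1)^{2n-1}$ is a unit in $\C[[u^{-1}]]$, so multiplying the centre-generating series by $u(u-1)^{2n-1}$ gives another set of algebraically independent generators. Your route instead tries to reconstruct that theorem, and this is where there is a genuine gap. In your generation argument you say that the top symbol of a central element ``must commute with the symbols of $E^{(r)}$ and $F^{(r)}$ in the Kazhdan associated graded, which forces it to be a polynomial in the symbols of the $D_i^{(r)}$''. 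But the associated graded is commutative, so commuting is automatic; what you need is Poisson-centrality, and even granting that, the conclusion is not that the symbol lies in the subalgebra generated by the $D_i^{(r)}$ --- you would need to determine the full Poisson centre of $\gr Y_2(\sigma)$, which is precisely the content of the theorem you are trying to prove. Moreover, even if the symbol were a polynomial in the $D_i^{(r)}$, that alone does not let you ``subtract a matching polynomial in the $Z^{(r)}$'': not every such polynomial arises from the $Z^{(r)}$. A minor side remark: the scalar factor $u(u-1)^{2n-1}$ is not needed to make the coefficients land in $Y_2(\sigma)$ --- the $D_i^{(r)}$ are already generators of the shifted Yangian, so the coefficients of $D_1(u)D_2(u-1)$ are there from the start.

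In short: keep your formula argument, but for the first sentence of the lemma replace your filtered-induction sketch with a direct citation of \cite[Theorem~2.6]{BKrep} plus the invertibility remark; that is both correct and what the paper does.
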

\begin{proof}
The first claim follows from \cite[Theorem~2.6]{BKrep} in view of the fact that $u^{-2n+1} (u-1)^{2n-1}$ is invertible in $\C[[u^{-1}]]$. We proceed to prove formula \eqref{e:centreYangian}. Using the binomial theorem we have $(u-1)^{-s} = \sum_{r\ge s} \binom{r-1}{r-s} u^{-r}$. 
It follows that
\begin{eqnarray}
D_2(u-1) = \sum_{r\ge 0} (u-1)^{-r} D_2^{(r)} = \sum_{r\ge 0} u^{-r} \sum_{s=0}^r D_2^{(s)} \binom{r-1}{r-s} = \sum_{r\ge 0} u^{-r} \cD_2{}^{(r)}.
\end{eqnarray}
If we define $C(u) = \sum_{r\ge 0} C^{(r)}u^{-r} := D_1(u) D_2(u-1)$ then we have
\begin{eqnarray}
\label{e:Cformula}
C(u) = \sum_{r, s\ge 0} D_1^{(r)} \cD_2{}^{(s)} u^{-r-s} = \sum_{r \ge 0} \sum_{s=0}^r u^{-r} D_1^{(s)} \cD_2{}^{(r-s)}.
\end{eqnarray}
At the same time we have 
\begin{eqnarray}
\label{e:polyformula}
u(u-1)^{2n-1} = \sum_{i=1}^{2n} \binom{2n-1}{i-1} (-u)^i.
\end{eqnarray}
Finally if we have a polynomial $f(u) = \sum_{i=0}^m f_i u^i$ and a power series $A(u) = \sum_{r\ge 0} A_r u^{-r}$ then for $r=0,...,m$ the $u^{m-r}$ coefficient of $f(u) A(u)$ is
$\sum_{s=0}^r f_{m-s} A_s$. Since $\binom{2n-1}{-1} = 0$ we can combine this last statement together with \eqref{e:Cformula} and \eqref{e:polyformula} we arrive at the proof of \eqref{e:centreYangian}.
\end{proof}

\begin{Theorem}
\label{T:Yangianstheorem}
There is a surjective algebra homomorphism
$$Y_2(\sigma) \longtwoheadrightarrow U(\g_0, e_0)$$
with kernel generated by
$$\{D_1^{(r)} \mid r > 1\} \cup \{ Z^{(2r - 1)} \mid r=1,...,n\}.$$
\end{Theorem}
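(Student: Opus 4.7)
The plan is to assemble three layers of surjections and track the kernels. Level one is the Brundan--Kleshchev isomorphism \cite{BKshift} applied to the subregular nilpotent $e \in \gl_{2n}$, which has Jordan type $(2n-1, 1)$. The chosen shift matrix with $s_{1,2} = 2n-2$ corresponds to the pyramid with a bottom row of length $2n-1$ and a single top box, and the resulting truncated shifted Yangian identification reads $U(\gl_{2n}, e) \simeq Y_2(\sigma) / \langle D_1^{(r)} \mid r > 1 \rangle$, where the truncation ideal is generated in the top row by the degree-of-row cutoff. Level two is Lemma~\ref{L:glnvsln}: it presents $U(\sl_{2n}, e)$ as the quotient of $U(\gl_{2n}, e)$ by the image of the $1$-dimensional centre $Z(\gl_{2n}) = \C I_{2n}$. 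Level three is Corollary~\ref{C:corollarytothethirdmain}, which presents $U(\so_{2n+1}, e_0)$ as the quotient of $U(\sl_{2n}, e)$ by the ideal generated by $\{z - z \cdot \gamma \mid \gamma \in \Gamma_0,\ z \in Z(\sl_{2n}, e)\}$, where $\Gamma_0$ is the order-$2$ Dynkin automorphism of $\sf A_{2n-1}$ folding to $\sf B_n$. The hypotheses of Theorem~\ref{T:subregularuniversal_specialcase} (type $\sf B_n$, $n \geq 2$) are in force, so Corollary~\ref{C:corollarytothethirdmain} applies with the sharp description of the kernel.

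The next step is to translate the two additional ideals into the Yangian presentation. By construction $Z^{(r)}$ has Kazhdan degree $2r$, and by the general result (cited in the proof of the lemma preceding Theorem~\ref{T:Yangianstheorem}) the elements $\{Z^{(r)}\}_{r > 0}$ generate the centre of $Y_2(\sigma)$ and map to generators of $Z(U(\gl_{2n}, e)) = Z(\gl_{2n}) \otimes \C[\h^*]^{S_{2n}} / \text{(truncation)}$. The image of the degree-$2$ central element is, by formula \eqref{e:centreYangian} and Kazhdan-degree inspection, precisely $Z^{(1)} = -(2n-1)(D_1^{(1)} + D_2^{(1)}) + 1$, matching (up to a unit) the image of the trace $I_{2n} \in Z(\gl_{2n})$. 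Thus level two kills exactly $Z^{(1)}$ modulo the level-one relations. For level three, I would combine Example~\ref{E:ABexample} with the Harish--Chandra description from Section~\ref{ss:casimirsandcentres}: after identifying $Z(\sl_{2n}, e) \cong \C[\h^*]^{S_{2n}} / (e_1)$, the nontrivial $\gamma \in \Gamma_0$ acts by $e_j \mapsto (-1)^j e_j$, so the ideal $\langle z - z \cdot \gamma \rangle$ is generated by the odd elementary symmetric polynomials $e_3, e_5, \ldots, e_{2n-1}$. The principal symbol of $Z^{(r)}$ computed from \eqref{e:centreYangian} and the known symbol map is $(-1)^{r} (2n-1)(\cdot) + \cdots$ giving (up to a nonzero scalar and lower-order Kazhdan corrections) the elementary symmetric polynomial $e_r$; hence $\Gamma_0$ acts on $Z^{(r)}$ by $(-1)^r$ modulo lower order, and an induction on Kazhdan degree identifies $\langle Z^{(2r-1)} \mid r = 2, \ldots, n \rangle$ with the level-three ideal.

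Putting the three levels together, the composite $Y_2(\sigma) \twoheadrightarrow U(\gl_{2n}, e) \twoheadrightarrow U(\sl_{2n}, e) \twoheadrightarrow U(\so_{2n+1}, e_0)$ is surjective with kernel equal to $\langle D_1^{(r)} \mid r > 1\rangle + \langle Z^{(2r-1)} \mid r = 1, \ldots, n \rangle$, as claimed; the $Z^{(r)}$ with $r > 2n$ are automatically expressible in terms of the lower-degree generators once the level-one truncation is imposed, so no further relations are needed. The main obstacle is the second step: one must pin down precisely which Yangian central series represents the trace of $\gl_{2n}$ under Brundan--Kleshchev, and verify that the higher odd generators $Z^{(2r-1)}$ have the correct $\Gamma_0$-twist. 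This is genuinely a computation with the explicit central series \eqref{e:centreYangian}, using that its principal symbol under the Kazhdan-graded isomorphism $\gr U(\g, e) \simeq \C[\Ss_\chi]$ recovers the classical Chevalley--Harish-Chandra generators $e_r$; once that matching is established, the $\Gamma_0$-twist follows from Example~\ref{E:ABexample} and the remaining bookkeeping is straightforward.
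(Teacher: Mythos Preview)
Your three-layer decomposition is exactly the paper's: Brundan--Kleshchev truncation for $U(\gl_{2n},e)$, then Lemma~\ref{L:glnvsln} to pass to $\sl_{2n}$, then Corollary~\ref{C:subregularuniversal_specialcase} together with Example~\ref{E:ABexample} to reach $U(\so_{2n+1},e_0)$. The structure is correct.

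The gap is in how you match the Yangian central elements with the Harish--Chandra generators. A principal-symbol plus Kazhdan-degree induction does \emph{not} suffice to prove that the ideal $(\tilde Z^{(3)},\tilde Z^{(5)},\ldots,\tilde Z^{(2n-1)})$ in $Z(\sl_{2n},e)\cong\C[e_2,\ldots,e_{2n}]$ coincides with $(e_3,e_5,\ldots,e_{2n-1})$. Knowing only that $\gr\tilde Z^{(2k-1)}=c_k e_{2k-1}$ leaves the lower-filtration part of $\tilde Z^{(2k-1)}$ unconstrained; it could contain a purely even term such as $e_2$, and then already $(\tilde Z^{(3)})=(e_3+e_2)\neq(e_3)$. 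Two filtered ideals with the same associated graded need not be equal, so no induction on Kazhdan degree rescues this. Your own computation $Z^{(1)}=1-(2n-1)(D_1^{(1)}+D_2^{(1)})$ illustrates the issue at the bottom: to conclude that its image lies in $\C\cdot e_1$ (the image of $\z(\gl_{2n})$) rather than merely in $\C+\C e_1$, you already need to know exactly what $D_1^{(1)}+D_2^{(1)}$ maps to under Brundan--Kleshchev, not just its symbol.

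The paper closes this gap by citing \cite[Lemma~3.7]{BKrep}, which gives the \emph{exact} identification: the image of $Z^{(r)}$ under the Brundan--Kleshchev map equals the image of $e_r$ under $\C[\h^*/W]\to Z(\sl_{2n},e)$ on the nose (via the isomorphisms in \eqref{e:centres}). With that in hand the equality of ideals is immediate. So your outline is right, but the ``genuine computation'' you flag is not a symbol computation at all; it is the precise formula from \cite{BKrep}, and without it the argument does not go through.
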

\begin{proof}
Let $e$ be a subregular nilpotent element of $\sl_{2n}\subseteq \gl_{2n}$. The main result of \cite{BKshift} implies that there is a surjective homomorphism $Y_2(\sigma) \onto U(\gl_{2n}, e)$ with kernel generated by $\{D_1^{(r)} \mid r > 1\}$. It follows from \cite[Lemma~3.7]{BKrep} that the image of the element $Z^{(1)}$ in \eqref{e:Zdefined} under the map $Y_2(\sigma) \to U(\gl_{2n}, e)$ lies in the image of $\z(\gl_{2n}) \to Z(\gl_{2n}) \to U(\gl_{2n}, e)$. Together with Lemma~\ref{L:glnvsln} this implies that $U(\sl_{2n}, e)$ is naturally isomorphic to the quotient of $U(\gl_{2n}, e)$ by $Z^{(1)}$. Finally by Example~\ref{E:ABexample} and Corollary~\ref{C:subregularuniversal_specialcase} there is a surjective algebra homomorphism $U(\sl_{2n},e) \onto U(\g_0, e_0)$ and the kernel is generated by the image of the elementary symmetric polynomials $\{e_{2r+1}\mid r = 1,...,n-1\}$ under the isomorphism $\C[\h^*/W] \to Z(\sl_{2n},e)$ discussed in \eqref{e:centres}. Here we use $(\h, W)$ to denote a torus and Weyl group for $\sl_{2n}$. To complete the proof of the current Theorem it suffices to show, for $r=1,...,n-1$, that the image of $e_{2r+1}$ under $\C[\h^*/W] \to Z(\sl_{2n},e)$ is equal to the image of $Z^{(2r+1)}$ under $Y_2(\sigma) \to U(\gl_{2n},e) \to U(\sl_{2n},e)$. Once again this follows from \cite[Lemma~3.7]{BKrep}.
\end{proof}

\vspace{10pt}

\noindent {Contact details:}\vspace{5pt}

Filippo Ambrosio {\sf filippo.ambrosio@uni-jena.de};\\
Friedrich-Schiller-Universit{\"a}t Jena FMI, Ernst-Abbe-Platz 2, 07743 Jena, Germany.\vspace{5pt}

Giovanna Carnovale {\sf carnoval$@$math.unipd.it};

Francesco Esposito {\sf esposito$@$math.unipd.it};\\
Dipartimento di Matematica ``Tullio Levi-Civita" (DM), Via Trieste, 63 - 35121 Padova, Italy.\vspace{5pt}

Lewis Topley {\sf lt803$@$bath.ac.uk};\\
Department of Mathematical Sciences, University of Bath, Claverton Down, Bath, BA2 7AY, UK.

\end{document}